\newtheorem{Theorem}[equation]{Theorem}
\newtheorem{Corollary}[equation]{Corollary}
\newtheorem{Lemma}[equation]{Lemma}
\theoremstyle{definition}
\newtheorem{Definition}[equation]{Definition}
\theoremstyle{remark}
\newtheorem{Remark}[equation]{Remark}
\numberwithin{equation}{section}
\DeclareMathOperator{\id}{id}
\DeclareMathOperator{\tr}{tr}
\DeclareMathOperator{\ad}{ad}
\newcommand{\ve}{\varepsilon}
\begin{document}
\title{Suggestion of a definition of the twisted affine Yangian of type $D$}
\author{Mamoru Ueda\thanks{Graduate school of Mathematical Sciences, the University of Tokyo, 3-8-1 Komaba Meguro-ku Tokyo 153-8914, Japan, mueda@ms.u-tokyo.ac.jp\\
MSC class:17B05, 17B35, 17B69}}
\date{}

\maketitle
\begin{abstract}
We suggest a definition of the 1-parameter twisted affine Yangian $TY_{\hbar}(\widehat{\mathfrak{so}}(n))$ and the 2-parameter twisted affine Yangian $TY_{\hbar,\ve}(\widehat{\mathfrak{so}}(2n))$. These twisted affine Yangians are deformations of subalgebras of the universal central extension of $\mathfrak{sl}(2n)[u^{\pm1},v]$. As for the 1-parameter twisted affine Yangian, we show that $TY_{\hbar}(\widehat{\mathfrak{so}}(n))$ is a coideal of the affine Yangian associated with $\widehat{\mathfrak{sl}}(n)$. As for the two parameter twisted affine Yangian, we construct a homomorphism from $TY_{\hbar,\ve}(\widehat{\mathfrak{so}}(8))$ to the universal enveloping algebra of the rectangular $W$-algebra $\mathcal{W}^k(\mathfrak{sp}(16),(2^8))$.
\end{abstract}

\begin{center}
Keywords: central extension, involution, twisted Yangian, twisted affine Yangian, $W$-algebra
\end{center}
\section{Introduction}
Drinfeld (\cite{D1}, \cite{D2}) introduced the finite Yangian $Y_h(\mathfrak{g})$ associated with a finite dimensional simple Lie algebra $\mathfrak{g}$. The finite Yangian $Y_h(\mathfrak{g})$ is a quantum group which is a deformation of a current algebra $\mathfrak{g}\otimes\mathbb{C}[z]$. The finite Yangian has several presentations: the Drinfeld presentation, finite presentation and so on. 

The finite Yangian has a coideal called the twisted Yangian.
Olshanskii (\cite{O}) introduced the twisted Yangian associated with an orthogonal Lie algebra $\mathfrak{so}(n)$ or a symplectic Lie algebra $\mathfrak{sp}(\dfrac{n}{2})$. Similarly to the finite Yangian, the twisted Yangian also has several presentations. Olshanskii defined the twisted Yangian by the RTT presentation. Belliard and Regelskis \cite{BR1} constructed the Drinfeld $J$ presentation of the twisted Yangian. Lu-Wang-Zhang \cite{LWZ0}, \cite{LWZ} and Lu-Zhang \cite{LWZ1} gave the Drinfeld presentation of the twisted Yangian. As for the minimalistic presentation of the twisted Yangian, Harako and the author \cite{HU} constructed it for type $D$ setting and Lu \cite{L} gave it for any setting.

By using the Drinfeld presentation of the finite Yangian, the definition of the Yangian can be naturally extended to a symmetrizable Kac-Moody Lie algebra. In particular, the Yangian associated with an affine Lie algebra is called the {\it affine Yangian}. For an affine Lie algebra, Guay-Nakajima-Wendlandt \cite{GNW} constructed a finite presentation of the Yangian and showed that the Yangian associated with an affine Lie algebra has a coproduct. Unlike finite Yangians, the affine Yangian is not a deformation of the current algebra. For instance, the affine Yangian associated with $\widehat{\mathfrak{sl}}(n)$ is a deformation of the universal central extension of $\mathfrak{sl}(n)[u^{\pm1},v]$.
Guay \cite{Gu1} defined the affine Yangian $Y_{\hbar,\ve}(\widehat{sl}(n))$ with two parameters $\hbar,\ve$ associated with $\widehat{\mathfrak{sl}}(n)$ by extending the finite presentation of the Yangian associated with $\mathfrak{sl}(n)$. 

The Lie algebra $\mathfrak{sl}(n)[u^{\pm1},v]$ has the following involution:
\begin{align*}
\omega\colon \mathfrak{sl}(n)[u^{\pm1},v]\to\mathfrak{sl}(n)[u^{\pm1},v],\ Xu^rv^s\mapsto -(-1)^sX^Tu^{-r}v^s,
\end{align*}
where $X^T$ is the transpose of $X$ as a matrix. In section~3, we construct a presentation of a central extension of $U(\mathfrak{sl}(n)[u^{\pm1},v]^\omega)$. In section~4, we define the 1-parameter twisted affine Yangian associated with $\mathfrak{so}(n)$ as a deformation of $U(\mathfrak{sl}(n)[u^{\pm1},v]^\omega)$ and show that it becomes a coideal of the affine Yangian of type $A$ by the same way as \cite{L}. Since we cannot extend this twisted affine Yangian to 2 parameter, we consider the another involution of $\mathfrak{sl}(2n)[u^{\pm1},v]$.

Let us set $I_n=\{\pm1,\pm2,\cdots,\pm n\}$ and take the basis of $\mathfrak{gl}(2n)$ as $\{E_{i,j}\}_{i,j\in I_n}$. The Lie algebra $\mathfrak{sl}(2n)[u^{\pm1},v]$ has the following involution:
\begin{align*}
\tau\colon \mathfrak{sl}(2n)[u^{\pm1},v]\to\mathfrak{sl}(2n)[u^{\pm1},v],\ E_{i,j}u^rv^s\mapsto -(-1)^sE_{-j,-i}u^{-r}v^s.
\end{align*}
In section~5, we construct a presentation of a central extension of $U(\mathfrak{sl}(n)[u^{\pm1},v]^\tau)$. In Section~6, we suggest a definition of the twisted affine Yangian $TY_{\hbar,\ve}(\widehat{\mathfrak{so}}(2n))$ by extending a minimalistic presentation given in \cite{HU}. This twisted affine Yangian has the following feature.
\begin{Theorem}
In the case $\hbar=\ve=0$, $TY_{0,0}(\widehat{\mathfrak{so}}(2n))$ coincides with $U(\mathfrak{sl}(2n)[u^{\pm1},v]^\tau)$.
\end{Theorem}
It is not clear whether $TY_{\hbar,\ve}(\widehat{\mathfrak{so}}(2n))$ becomes a coideal of the affine Yangian. However, we give a relationship with a rectangular $W$-algebra of type $C$ in the special case.

Recently, the Yangian has been actively used for the study of $W$-algebras. In the finite setting, there exists a relationship between the finite Yangian of type $A$ and a finite $W$-algebra of type $A$ given by Brundan-Kleshchev \cite{BK}. As for other types, a finite $W$-algebra is connected to the twisted Yangian (see \cite{Bro}, \cite{DKV} and \cite{LPTTW}). In \cite{U77}, the author defined the twisted affine Yangian of type $C$ as a subalgebra of the affine Yangian $Y_{\hbar,\ve}(\widehat{\mathfrak{sl}}(n))$ and constructed a relationship between the twisted affine Yangian of type $C$ and a rectangular $W$-algebra of type $D$ in certain cases. In the last section of this article, we construct a relationship between the twwisted affine Yangian $TY_{\hbar,\ve}(\widehat{\mathfrak{so}}(2n))$ and rectangular $W$-algebras of type $C$ in the special case.
\begin{Theorem}
Suppose that $\ve=k+6$. Then, there exists an algebra homomorphism from $TY_{\hbar,\ve}(\widehat{\mathfrak{so}}(8))$ to the universal enveloping algebra of $\mathcal{W}^k(\mathfrak{sp}(16),(2^8))$, which is a $W$=algebra associated with $\mathfrak{sp}(16)$ and a nilpotent element of type $(2^8)$.
\end{Theorem}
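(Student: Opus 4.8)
The plan is to construct the map directly on the generators furnished by the minimalistic presentation of \cite{HU}, so that the theorem reduces to checking finitely many defining relations inside $U(\mathcal{W}^k(\mathfrak{sp}(16),(2^8)))$. The geometric input is the even Dynkin grading attached to the nilpotent of type $(2^8)$: writing the defining module of $\mathfrak{sp}(16)$ as $U\otimes E$ with $U=\mathbb{C}^2$ symplectic (carrying the $\mathfrak{sl}_2$-triple) and $E=\mathbb{C}^8$ orthogonal (the multiplicity space), one has $\mathfrak{sp}(16)=S^2U\otimes S^2E\,\oplus\,\Lambda^2U\otimes\Lambda^2E$, so that the reductive centralizer of the nilpotent is $\Lambda^2E=\mathfrak{so}(8)$ and the lowest graded piece $\mathfrak{g}_{-2}$ is $S^2E$. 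Hence the weight-one generators of $\mathcal{W}^k(\mathfrak{sp}(16),(2^8))$ span an affine vertex subalgebra of type $\widehat{\mathfrak{so}}(8)$, while the weight-two generators form the $\mathfrak{so}(8)$-module $S^2E=S^2(\mathbb{C}^8)$. This is, up to the trace part, precisely the isotype of the symmetric-space direction of the coideal pair $(\mathfrak{sl}(8),\mathfrak{so}(8))$, which is exactly where the degree-one generators of the twisted object must land.

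First I would send the affine (current-algebra) generators of $TY_{\hbar,\ve}(\widehat{\mathfrak{so}}(8))$ to Fourier modes of the weight-one $\widehat{\mathfrak{so}}(8)$-currents, after fixing their normalization. The degree-one generators I would send to prescribed linear combinations of modes of the weight-two generators together with normally ordered quadratic expressions in the weight-one currents; $\mathfrak{so}(8)$-equivariance and the conformal weight fix these combinations up to finitely many scalars, which are then pinned down by matching the simplest commutators. To predict the precise formulas I would use the folding picture: restrict a type $A$ homomorphism of the shape $Y_{\hbar,\ve}(\widehat{\mathfrak{sl}}(8))\to U(\mathcal{W}^k(\mathfrak{gl}(16),(2^8)))$ to the coideal subalgebra cut out by the involution with fixed points $\mathfrak{so}(8)\subset\mathfrak{sl}(8)$, and compose with the inclusion realizing $\mathcal{W}^k(\mathfrak{sp}(16),(2^8))$ as the invariants of the induced involution on $\mathcal{W}^k(\mathfrak{gl}(16),(2^8))$. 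Since the present article establishes only the classical embedding (the $\hbar=\ve=0$ statement above), I would treat this picture as heuristic and verify the resulting formulas intrinsically.

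The verification then splits into three layers. The relations internal to the $\widehat{\mathfrak{so}}(8)$-part hold by construction, since the weight-one currents already close into the affine vertex algebra. The relations coupling the affine generators to the degree-one generators are $\mathfrak{so}(8)$-equivariance statements and reduce to the adjoint action of the currents on the weight-two generators. The genuinely difficult layer is the degree-raising relation of Yangian type together with the twisted (reflection) relations, whose verification requires the full nonlinear operator product expansions among the weight-two generators, including the normally ordered correction terms. I expect this to be the main obstacle: the quadratic Casimir of $\mathfrak{so}(8)$ enters through the singular part of these OPEs and produces the shift that forces $\ve=k+6$, the $6$ being the dual Coxeter number $h^\vee(\mathfrak{so}(8))$. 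Organizing the computation by $\mathfrak{so}(8)$-isotypes, and exploiting the triality symmetry of $D_4$, should keep the number of independent OPEs manageable; once the degree-raising and twisted relations are checked and the deformed Serre relations are seen to survive the twist, the remaining relations follow routinely.
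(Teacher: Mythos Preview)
Your overall strategy coincides with the paper's: define $\Phi$ on the finite generating set and verify the defining relations \eqref{rel1}--\eqref{rel12} using the OPEs among the $W$-algebra generators $W^{(1)}_{i,j}$ and $W^{(2)}_{i,j}$. Two points where your outline diverges from what the paper actually does are worth flagging.

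First, the map is simpler than you anticipate. The paper sends the degree-one generators to \emph{pure} modes of the weight-two fields, e.g.\ $\Phi(x^+_{i,1})=-\hbar\,W^{(2)}_{i,i+1}t$ and $\Phi(h_{i,1})=-\hbar(W^{(2)}_{i,i}-W^{(2)}_{i+1,i+1})t$, with no additional normally ordered quadratic corrections in the weight-one currents. The quadratic expressions you have in mind do appear, but on the \emph{right-hand sides} of the deformed relations \eqref{rel10}--\eqref{rel12} (through the elements $A_{i,j},B_{i,j},C_{i,j}$), not in the images of the generators. If you build the map with extra quadratic tails you will only be reparametrising the same $\Phi$ and complicating every subsequent check.

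Second, the substantive content of the proof is the explicit computation of the $W^{(2)}$--$W^{(2)}$ OPEs \eqref{OPE4}--\eqref{OPE7}, which the paper obtains (for $n=4$) by computer algebra and then translates into mode identities to match \eqref{rel10}--\eqref{rel12} term by term. Your plan to ``organise by $\mathfrak{so}(8)$-isotypes'' and invoke triality is not how the paper proceeds and would not obviously reduce the work: the relevant singular parts are long explicit sums of normally ordered products, and the matching with $A_{i,j},B_{i,j},C_{i,j}$ is a line-by-line identification rather than a representation-theoretic argument. The shift $\ve=k+6$ does come from $\alpha=k+n+2$ at $n=4$, as you guessed, but it emerges from these OPE coefficients rather than from an abstract Casimir computation. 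In short: your sketch is a correct blueprint, but the proof only becomes a proof once the $W^{(2)}$--$W^{(2)}$ OPEs are written out and compared directly with the right-hand sides of \eqref{rel10}--\eqref{rel12}.
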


\section*{Acknowledgement}
The author expresses his sincere thanks to Shuichi Harako for helping him to compute OPEs. 

\section{Universal central extension of $\mathfrak{sl}(n)[u^{\pm1},v]$}
In this section, we recall the work of \cite{MRY}, which gives a presentation of the universal central extension of $\mathfrak{sl}(n)[u^{\pm1},v]$. For a perfect Lie algebra $\mathfrak{g}$, a central extension of $\mathfrak{g}$ is a pair of a Lie algebra $\widetilde{\mathfrak{g}}$ and a surjective homomorphism $\pi\colon\widetilde{\mathfrak{g}}\to\mathfrak{g}$ whose kernel is contained in the center of $\widetilde{\mathfrak{g}}$. The central extension $(\widetilde{\mathfrak{g}},\pi)$ is called the universal central extension of $\mathfrak{g}$ if for every central extension $(\mathfrak{e},\psi)$ there exists a unique homomorphism $\phi\colon\widetilde{\mathfrak{g}}\to\mathfrak{e}$ satisfying that $\psi\circ\phi=\pi$.

In \cite{Kassel}, Kassel gave an explicit presentation of the universal central extension of $\mathfrak{sl}(n)[u^{\pm1},v]$.
We set $(\Omega(\mathbb{C}[u^{\pm1},v]),d)$ as the module of differentials of $\mathbb{C}[u^{\pm1},v]$ by the same way as Section 2 in \cite{MRY}. We take $\{a_i\}$ as a basis of $\mathbb{C}[u^{\pm1},v]$. Let $F$ be a free module of $\mathbb{C}[u^{\pm1},v]$ whose basis is $\{\widetilde{d} a_i\}$. We regard $F$ as a 2-sided $\mathbb{C}[u^{\pm1},v]$ module by setting $a(\widetilde{d} b)=(\widetilde{d} b)a$ for any $a,b\in\mathbb{C}[u^{\pm1},v]$. We set a linear map $\widetilde{d} \colon \mathbb{C}[u^{\pm1},v]\to F$ as $\widetilde{d} (\sum_{i}c_ia_i)=\sum_{i}c_i\widetilde{d} a_i$ and $K$ as a submodule of $F$ generated by $\widetilde{d}(ab)=(\widetilde{d}a)b+a(\widetilde{d} b)$ for $a,b\in\mathbb{C}[u^{\pm1},v]$. We denote $F/K$ by $\Omega(\mathbb{C}[u^{\pm1},v])$. We also set $d\colon\mathbb{C}[u^{\pm1},v]\to\Omega(\mathbb{C}[u^{\pm1},v])$ as a homomorphism induced by $\widetilde{d}$.

Let $\mathfrak{sl}(n)\otimes\mathbb{C}[u^{\pm1},v]\oplus\Omega(\mathbb{C}[u^{\pm1},v])/d\mathbb{C}[u^{\pm1},v]$ be a Lie algebra whose commutator relations are given by 
\begin{gather*}
[z_1\otimes a_1,z_2\otimes a_2]=[z_1,z_2]\otimes a_1\cdot a_2+\kappa(z_1,z_2)a_2da_1,\\
[\mathfrak{sl}(n)\otimes\mathbb{C}[u^{\pm1},v],\Omega^1(\mathbb{C}[u^{\pm1},v])/d\mathbb{C}[u^{\pm1},v]]=0,\\
[\Omega^1(\mathbb{C}[u^{\pm1},v])/d\mathbb{C}[u^{\pm1},v],\Omega^1(\mathbb{C}[u^{\pm1},v])/d\mathbb{C}[u^{\pm1},v]]=0.
\end{gather*}
We can define a surjective homomorphism $\pi\colon\mathfrak{sl}(n)\otimes\mathbb{C}[u^{\pm1},v]\oplus\Omega(\mathbb{C}[u^{\pm1},v])/d\mathbb{C}[u^{\pm1},v]\to\mathfrak{sl}(n)\otimes\mathbb{C}[u^{\pm1},v]$ naturally. Since the kernel of $\pi$ is $\Omega(\mathbb{C}[u^{\pm1},v])/d\mathbb{C}[u^{\pm1},v]$, $\pi$ gives a central extension.
Moreover, by \cite{Kassel}, $\mathfrak{sl}(n)\otimes\mathbb{C}[u^{\pm1},v]\oplus\Omega(\mathbb{C}[u^{\pm1},v])/d\mathbb{C}[u^{\pm1},v]$ and $\pi$ becomes the universal central extension of $\mathfrak{sl}(n)\otimes\mathbb{C}[u^{\pm1},v]$.

Moody-Rao-Yokonuma \cite{MRY} gave a new presentation of this universal central extension.
Let us take a matrix $(a^A_{i,j})_{0\leq i,j\leq n-1}$ as
\begin{equation*}
a^A_{i,j}=\begin{cases}
2&\text{ if }i=j,\\
-1&\text{ if }i=j\pm1,\\
-1&\text{ if }(i,j)=(0,n-1),(n-1,0)\\
0&\text{ otherwise}.
\end{cases}
\end{equation*}
Then, $(a^A_{i,j})_{0\leq i,j\leq n-1}$ becomes the Cartan matrix of an affine Lie algebra $\widehat{\mathfrak{sl}}(n)=\mathfrak{sl}(n)\otimes\mathbb{C}[t^{\pm1}]\oplus\mathbb{C}c$ whose commutator relations are given by
\begin{gather*}
[x\otimes t^u,y\otimes t^s]=[x,y]\otimes t^{u+s}+u\delta_{u+s,0}\tr(xy)c\text{ for }x,y\in\mathfrak{sl}(n),\\
[c,\widehat{\mathfrak{sl}}(n)]=0,
\end{gather*}
where $\tr$ is the trace of $\mathfrak{gl}(n)$.
\begin{Definition}\label{Prop32}
We define $A(n)$ as the associative algebra over $\mathbb{C}$ generated by
\begin{equation*}
\{x_{i,r}^\pm, h_{i,r}\mid 0\leq i\leq n-1,r\in\mathbb{Z}_{\geq0}\}
\end{equation*}
subject to the following defining relations:
\begin{gather}
[h_{i,r}, h_{j,s}] = 0,\label{Eq2.1}\\
[x_{i,r}^{+}, x_{j,s}^{-}] = \delta_{i,j} h_{i, r+s},\label{Eq2.2}\\
[h_{i,0}, x_{j,r}^{\pm}] = \pm a^A_{i,j} x_{j,r}^{\pm},\label{Eq2.4}\\
[h_{i,r+1}, x_{j,s}^{\pm}] = [h_{i,r}, x_{j,s+1}^{\pm}],\label{Eq2.5}\\
[x_{i, r+1}^{\pm}, x_{j, s}^{\pm}] = [x_{i, r}^{\pm}, x_{j, s+1}^{\pm}],\label{Eq2.8}\\
\sum_{\sigma\in S_{1-a_{i,j}}}\limits(\ad x_{i,r_{\sigma(1)}}^{\pm})\cdots(\ad x_{i,r_{\sigma(1-a_{i,j})}}^{\pm})(x_{j,s}^{\pm})= 0 \text{ if }i \neq j,\label{Eq2.10}
\end{gather}
where $S_l$ is a symmetric group of degree $l$.
\end{Definition}
By using \eqref{Eq2.4} and $[x^\pm_{i,0},x^\pm_{i,0}]=0$, we obtain 
\begin{equation}
[x^\pm_{i,r},x^\pm_{i,s}]=0.\label{Eq2.11}
\end{equation}

Moody-Rao-Yokonuma showed that $A(n)$ is isomorphic to the universal central extension of $\mathfrak{sl}(n)\otimes\mathbb{C}[u^{\pm1},v]$.
\begin{Theorem}[Proposition 3.5 in \cite{MRY}]
There exists an isomorphism
\begin{equation*}
\iota\colon A(n)\to \mathfrak{sl}(n)\otimes\mathbb{C}[u^{\pm1},v]\oplus\Omega(\mathbb{C}[u^{\pm1},v])/d\mathbb{C}[u^{\pm1},v]
\end{equation*}
given by
\begin{gather*}
x^+_{i,r}=\begin{cases}
E_{i,i+1}v^r&\text{ if }i\neq0,\\
E_{n,1}uv^r&\text{ if }i=0,
\end{cases}x^-_{i,r}=\begin{cases}
E_{i+1,i}v^r&\text{ if }i\neq0,\\
E_{1,n}u^{-1}v^r&\text{ if }i=0,
\end{cases}
\end{gather*}
where $E_{i,j}$ is a matrix unit of $\mathfrak{gl}(n)$ whose $(p,q)$ component is $\delta_{p,i}\delta_{q,j}$.
\end{Theorem}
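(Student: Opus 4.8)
The plan is to prove that $\iota$ is an isomorphism by the standard three-step scheme for presentation theorems: first check that the proposed assignment respects the defining relations of $A(n)$, then prove surjectivity, and finally establish injectivity through the universal property of the central extension. Throughout I abbreviate $R=\mathbb{C}[u^{\pm1},v]$, $L=\mathfrak{sl}(n)\otimes R$, and $\widehat{L}=L\oplus\Omega(R)/dR$ for the target, reading the bracket relations \eqref{Eq2.1}--\eqref{Eq2.10} as presenting $A(n)$ as a Lie algebra under its commutator (the form of Moody--Rao--Yokonuma's statement), and I use the bracket formula $[z_1\otimes a_1,z_2\otimes a_2]=[z_1,z_2]\otimes a_1a_2+\kappa(z_1,z_2)\,a_2\,da_1$ of $\widehat{L}$.

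First I would pin down the image of the Cartan generators. Relation \eqref{Eq2.2} forces $\iota(h_{i,r})=[\iota(x^+_{i,r'}),\iota(x^-_{i,s'})]$ for any $r'+s'=r$, which computes to $(E_{i,i}-E_{i+1,i+1})v^r$ for $i\neq0$ and to $(E_{n,n}-E_{1,1})v^r+u^{-1}v^r\,du$ for $i=0$; here the affine node produces a genuine central term from the trace form, and independence of the chosen factorization $r'+s'=r$ is part of the check, guaranteed because the spurious $dv$-cocycle term is exact. With these images fixed I would verify \eqref{Eq2.1}--\eqref{Eq2.10} one relation at a time. The shift relations \eqref{Eq2.5} and \eqref{Eq2.8} reflect the identity $v^{r+1}v^s=v^rv^{s+1}$ in $R$ and are immediate; relation \eqref{Eq2.4} is the root datum of the affine Cartan matrix $a^A$; and the Serre relations \eqref{Eq2.10} descend from those of $\mathfrak{sl}(n)$ together with commutativity of $R$. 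The one relation requiring genuine care is \eqref{Eq2.1}: the $\mathfrak{sl}(n)$-part of each bracket vanishes, while the cocycle contribution is proportional to $v^s\,d(v^r)=r\,v^{r+s-1}\,dv$, which represents $0$ in $\Omega(R)/dR$, so the bracket is zero as required. It is precisely the passage to $\Omega(R)/dR$ that makes the Cartan abelian.

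Next, for surjectivity I would argue that the image generates $\widehat{L}$. Iterated brackets of the images of $x^\pm_{i,r}$ for $i\neq0$ produce every root vector $E_{p,q}v^m$ of $\mathfrak{sl}(n)$ with $m\geq0$, hence $\mathfrak{sl}(n)\otimes\mathbb{C}[v]$ modulo center; bracketing repeatedly with the affine-node images $E_{n,1}uv^r$, $E_{1,n}u^{-1}v^r$ then climbs through all powers $u^{\pm k}$ — the classical statement that the Chevalley generators attached to the affine Dynkin diagram generate the loop algebra — yielding the full current part $L$. The remaining task is to show $\Omega(R)/dR$ lies in the image, which follows by exhibiting its classes as the cocycle parts of suitable brackets $[x^+_{i,r},x^-_{i,s}]$ and $[x^\pm_{i,r},x^\pm_{j,s}]$, using that $\widehat{L}$ is perfect so that its center is spanned by cocycle values of brackets already in the image.

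The main obstacle is injectivity, i.e. bounding $A(n)$ from above, and I would package it through the universal property. One first checks the easy fact that $A(n)$ is perfect (each generator lies in $[A(n),A(n)]$ by \eqref{Eq2.2} and \eqref{Eq2.4}), and then proves the substantive statement that $\ker\bar\iota$ is central in $A(n)$, where $\bar\iota=\pi\circ\iota\colon A(n)\to L$. Granting centrality, $(A(n),\bar\iota)$ is a central extension of the perfect Lie algebra $L$, so universality of $\widehat{L}$ yields a unique $\phi\colon\widehat{L}\to A(n)$ with $\bar\iota\circ\phi=\pi$; uniqueness of endomorphisms of $\widehat{L}$ over $L$ forces $\iota\circ\phi=\id$, while perfectness of $A(n)$ together with centrality of $\ker\bar\iota$ forces $\phi\circ\iota=\id$, so that $\iota$ is an isomorphism with inverse $\phi$. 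The genuinely hard step is the centrality of $\ker\bar\iota$, which is exactly the upper bound on $A(n)$: I would establish it by a PBW-type spanning argument, using \eqref{Eq2.5} and \eqref{Eq2.8} to reduce arbitrary iterated brackets of the generators to a standard ordered form indexed like a basis of $\widehat{L}$, and then checking that every element projecting to $0$ under $\bar\iota$ is a $\mathbb{C}$-combination of the central standard elements. Controlling the central degrees of freedom created by the affine node $0$ is where the computational weight of the proof lies.
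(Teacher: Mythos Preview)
The paper does not prove this theorem at all: it is quoted verbatim from \cite{MRY} as Proposition~3.5 there, and the paper only \emph{uses} it. So there is no ``paper's own proof'' to compare against directly. What the paper does do, however, is reproduce the Moody--Rao--Yokonuma method for the analogous statement about $D(2n)$ in Sections~3--4 (Lemmas~\ref{Lem128}, \ref{root}, \ref{Weyl} and Theorem~\ref{Thm128}), and that is the relevant benchmark.

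Your three-step scheme (well-definedness, surjectivity, injectivity via the universal property) is logically sound, and your verifications of \eqref{Eq2.1}--\eqref{Eq2.10} and of the image of $h_{0,r}$ are correct. The repackaging of injectivity through universality --- show $A(n)$ is perfect, show $\ker\bar\iota$ is central, then invoke the universal property of $\widehat L$ to produce an inverse --- is clean and works exactly as you say.

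The real issue is the step you yourself flag as the hardest: proving $\ker\bar\iota$ is central. You propose a ``PBW-type spanning argument'' using \eqref{Eq2.5}, \eqref{Eq2.8} to reduce brackets to a standard form, but you give no mechanism for actually bounding the dimension of the resulting span, and that is the whole difficulty. MRY's method, which the paper follows verbatim for $D(2n)$, supplies exactly this missing mechanism: one grades $A(n)$ by $Q\times\mathbb{Z}_{\geq0}$, shows the Serre relations make the operators $\tau_i=\exp(\ad x^+_{i,0})\exp(-\ad x^-_{i,0})\exp(\ad x^+_{i,0})$ into well-defined automorphisms implementing the Weyl group on root spaces, and then transports every real-root space to a simple-root space, which is visibly one-dimensional. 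This gives $\dim A(n)_\alpha^k\leq 1$ for real $\alpha$, forcing the kernel of $\bar\iota$ into the imaginary-root ($\mathbb{Z}\delta$) spaces; since $[x^\pm_{i,0},A(n)_{s\delta}^k]\cap A(n)_{s\delta}^k=0$, centrality follows. Your outline never invokes the Weyl group, and without it the ``standard ordered form'' reduction has no a priori upper bound --- you would be reproving a Serre-type presentation of the loop algebra by hand, which is substantially harder than the Weyl-group route and is not what \cite{MRY} does.
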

In \cite{Gu1}, it was shown that the classical limit of the affine Yangian associated with $\widehat{\mathfrak{sl}}(n)$ coincides with $A(n)$. If the twisted affine Yangian of types $BCD$ exists, the classical limit of the twisted affine Yangian should be embedded into $A(n)$. 

Finally, we consider a presentation of the subalgebra of $A(n)$ generated by $\{x^+_{i,r}\}$.
We take a subspace $\Omega_+\subset\Omega(\mathbb{C}[u^{\pm1},v])/d\mathbb{C}[u^{\pm1},v]$ spanned by $\{u^{r-1}v^sdu\mid r\geq1,s\geq0\}$. 
We set a subalgebra $A(n)_+\subset A(n)$ generated by $\bigoplus_{s\geq0}\bigoplus_{\alpha\in\Delta^+}\mathfrak{g}_\alpha v^s$ and $\Omega_+$. Let us take the grading on $A(n)$ by $\text{deg}(h_{i,r})=\text{deg}(x^\pm_{i,r})=r+1$ and denote the graded algebra associated with $A(n)$ by $\text{gr}A(n)$. 
\begin{Theorem}\label{plus}
Thus, $A(n)_+$ is isomorphic to an associative algebra generated by $\{\bar{x}^+_{i,r}\}$ with the relations:
\begin{gather*}
[\bar{x}^+_{i,r+1},\bar{x}^+_{j,s}]=[\bar{x}^+_{i,r},\bar{x}^+_{j,s+1}],\\
[\bar{x}^+_{i,r},\bar{x}^+_{j,s}]=0\text{ if }a^A_{i,j}=0,\\
[\bar{x}^+_{i,r},[\bar{x}^+_{i,s},\bar{x}^+_{j,u}]]=0\text{ if }a^A_{i,j}=-1.
\end{gather*}
\end{Theorem}
Let $A(n)_{pre}$ be a Lie algebra generated by $\{h_{i,r},x^\pm_{i,r}\}$ with the defining relations \eqref{Eq2.1}-\eqref{Eq2.5} and \eqref{Eq2.11}. We also set $A(n)_{pre,\pm}$ and $A(n)_{pre,0}$ as Lie subalgebras generated by $\{x^\pm_{i,r}\}$ and $\{h_{i,r}\}$.
Theorem~\ref{plus} follows from the following lemma.
\begin{Lemma}
\textup{(1)}\ The associative algebra $U(A(n)_{pre,\pm})$ is isomorphic to the quotient algebra of the free associative algebra generated by $\{x^\pm_{i,r}\}$ divided by \eqref{Eq2.8}.

\textup{(2)}\ Let us set $I_\pm$ as an ideal of $A(n)_{pre}$ generated by \eqref{Eq2.10}. Then, $I_-\cap A(n)_{pre,+}=\emptyset$.
\end{Lemma}
\begin{proof}
\textup{(1)}\ Let $V$ be the quotient algebra of a non-commutative polynomial ring $\mathbb{C}<y_{i,d}\mid0\leq i\leq n-1,d\in\mathbb{Z}_{\geq0}>$ divided by the relations $[y_{i,r+1},y_{j,s}]-[y_{i,r},y_{j,s+1}]$. By a direct computation, we obtain a homomorphism
\begin{align*}
\kappa\colon A(n)_{pre}\to\mathfrak{gl}(V)
\end{align*}
by
\begin{align*}
(\kappa(x^-_{i,d}))(y_{i_1,d_1}\cdots y_{i_r,d_r})&=y_{i,d}y_{i_1,d_1}\cdots y_{i_r,d_r},\\
(\kappa(h_{i,d}))(y_{i_1,d_1}\cdots y_{i_r,d_r})&=-\sum_{u=1}^r\limits a^A_{i,i_u}y_{i_1,d_1}\cdots y_{i_{u-1},d_{u-1}}y_{i_u,d+d_u}y_{i_{u+1},d_{u+1}}\cdots y_{i_r,d_r},\\
(\kappa(x^+_{i,d}))(1)&=0,\\
(\kappa(x^+_{i,d}))(y_{i_1,d_1}\cdots y_{i_r,d_r})&=\delta_{i,i_1}\kappa(h_{i,d+d_1})(y_{i_2,d_2}\cdots y_{i_r,d_r})+y_{i_1,d_1}(\kappa(x^+_{i,d}))(y_{i_2,d_2}\cdots y_{i_r,d_r}).
\end{align*}
The homomorphism $\kappa$ induces a surjective homomorphism $\Xi\colon A(n)_{pre,-}\to V$ by $\Xi(x)=(\kappa(x))1$.
By a direct computation, we can also construct a homomorphism 
\begin{align*}
\xi\colon V\to A(n)_{pre,-},\ y_{i,d}\mapsto x^-_{i,d}.
\end{align*}
Since $\xi\circ\Xi$ becomes an identity map, $\Xi$ becomes a bijective. Then, we find that $U(A(n)_{pre,-})$ is isomorphic to the quotient algebra of the free associative algebra generated by $\{x^-_{i,r}\}$ divided by \eqref{Eq2.8}. Since $A(n)_{pre,+}$ is isomorphic to $A(n)_{pre,-}$, $U(A(n)_{pre,+})$ is also isomorphic to  the quotient algebra of the free associative algebra generated by $\{x^+_{i,r}\}$ divided by \eqref{Eq2.8}.

\textup{(2)}\ It is enough to show that $I_-$ is contained in $A(n)_{pre,-}$. This follows from the fact that $\ad(x^+_{i,r})$ acts on the left hand side of \eqref{Eq2.10} trivially.
\end{proof}
\section{A central extension of $\mathfrak{sl}(n)[u^{\pm1},v]^\omega$}
First, we recall the notations about the affine Lie algebra $\widehat{\mathfrak{sl}}(n)$.
Let $\Delta$ (resp. $\Delta^+$, $\Delta^-$) be the set of roots (resp. positive roots, negative roots) of $\widehat{\mathfrak{sl}}(n)=\mathfrak{sl}(n)[u^{\pm1}]\oplus\mathbb{C}c$ whose commutator relation is given by
\begin{equation*}
[xu^r,yu^s]=[x,y]u^{r+s}+r\delta_{r+s,0}\tr(xy)c,
\end{equation*}
where $\tr$ is the trace of the matrix. 
We set an inner product $(\ ,\ )$ on $\widehat{\mathfrak{sl}}(n)$ by 
\begin{equation*}
(xu^s,yu^r)=\tr(xy)\delta_{r+s,0},\ (xu^r,c)=(c,c)=0.
\end{equation*}
We denote by $\mathfrak{g}_\alpha$ the root space associated with the root $\alpha$.
For $\alpha\in\Delta$, we take $\{x_\alpha^k\}_{k=1}^{\text{dim}(\mathfrak{g}_\alpha)}$ as a basis of $\mathfrak{g}_\alpha$ satisfying that $(x_\alpha^k,x_\beta^l)=\delta_{\alpha+\beta,0}\delta_{k,l}$ and $(x_\alpha^k)^T=x_{-\alpha}^k$, where $X^T$ is the transpose of the matrix $X$.
The following relations hold in $U(\widehat{\mathfrak{sl}}(n))^{\otimes2}$ (see Corollary 2.4 in \cite{K}):
\begin{gather}
\sum_{k=1}^{\text{dim}\mathfrak{g}_\alpha}[x_{-\alpha}^k,x_\beta]\otimes x_\alpha^k=\sum_{k=1}^{\text{dim}\mathfrak{g}_{\beta-\alpha}}x_{\beta-\alpha}^k\otimes [x_\beta,x_{\alpha-\beta}^k].\label{out}
\end{gather}
Let $\widehat{\tau}$ be an automorphism of $U(\widehat{\mathfrak{sl}}(n))$ defined by $X\mapsto-X^T$. Considering the images of both sides of \eqref{out} via $\widehat{\tau}\otimes\id$, we obtain
\begin{gather}
\sum_{k=1}^{\text{dim}\mathfrak{g}_\alpha}[x_{\alpha}^k,x_{-\beta}]\otimes x_\alpha^k=-\sum_{k=1}^{\text{dim}\mathfrak{g}_{\beta-\alpha}}x_{\alpha-\beta}^k\otimes [x_\beta,x_{\alpha-\beta}^k].\label{out1}
\end{gather}

Let us take an involution of $\mathfrak{sl}(n)[u^{\pm1},v]$:
\begin{equation*}
\omega\colon \mathfrak{sl}(n)[u^{\pm1},v]\to\mathfrak{sl}(n)[u^{\pm1},v],\ Xu^rv^s\mapsto -(-1)^{s}X^Tu^{-r}v^{s}.
\end{equation*}
In this section, we give a central extension of $\mathfrak{sl}(n)[u^{\pm1},v]^\omega$. 
\begin{Definition}
We set $BD(n)$ is an associative algebra generated by
\begin{gather*}
\{H_{i,2r+1},B_{i,r}\mid0\leq i\leq n-1,r\geq0\}
\end{gather*}
with the following relations:
\begin{gather*}
[H_{i,2r+1},H_{j,2s+1}]=0,\\
[H_{i,2r+1},B_{j,s}]=2a^A_{i,j}B_{j,2r+s+1},\\
[B_{i,r+1},B_{j,s}]-[B_{i,r},B_{j,s+1}]=-\delta_{i,j}((-1)^r+(-1)^s)H_{i,r+s},\\
[B_{i,r},B_{j,s}]=0\text{ if }a^A_{i,j}=0,\\
[B_{i,r},[B_{i,s},B_{j,u}]]=-B_{j,r+s+u}\text{ if }a^A_{i,j}=-1.
\end{gather*}
\end{Definition}
By a direct computation, we obtain a homomorphism $\phi\colon BD(n)\to A(n)$ given by
\begin{align*}
B_{i,r}&=\begin{cases}
E_{i,i+1}v^r-(-1)^rE_{i+1,i}v^r&\text{ if }1\leq i\leq 2n-1,\\
E_{n,1}uv^r-(-1)^rE_{1,n}u^{-1}v^r&\text{ if }i=0,
\end{cases}\\
H_{i,2r+1}&=\begin{cases}
2(E_{i,i}-E_{i+1,i+1})v^{2r+1}&\text{ if }1\leq i\leq 2n-1,\\
2(E_{n,n}-E_{1,1})v^{2r+1}+2u^{-1}v^{2r+1}du&\text{ if }i=0.
\end{cases}
\end{align*}
\begin{Theorem}
The homomorphism $\phi$ is injective.
\end{Theorem}
\begin{proof}

For $(i,j,r,s)$ satisfying that $E_{i,j}u^rv^s\in A(n)_+$, we define $E_{i,j}^{r,s}\in BD(n)$ inductively as follows:
\begin{gather*}
E^{0,s}_{i,i+1}=B_{i,s},E^{1,s}_{n,1}=B_{0,s}
\end{gather*}
and
\begin{align*}
E_{i,j}^{r,s}&=[E_{i,j-1}^{r,s},B_{j-1,0}]\text{ if }j>i+1\text{ and }1\leq i<j\leq n,\\
E_{i,i+1}^{r,s}&=\dfrac{1}{2}[H^{r}_{i,s},B_{i,0}],\\
H^r_{i,s}&=[E^{r-1,0}_{i,i+1},E_{i+1,i}^{1,s}]\text{ if }1\leq i\leq n-1,\\
H^r_{0,s}&=[B_{0,0},E_{1,n}^{r-1,s}],\\
E_{i,j}^{r,s}&=[E_{i,j-1}^{r,s},B_{j-1,0}]\text{ if }i>j\text{ and }n\geq i\geq j\geq2,\\
E_{i,1}^{r,s}&=[E_{i,n}^{r-1,s},B_{0,0}]\text{ if }1\leq i\leq n-1,\\
E_{n,1}^{r,s}&=\dfrac{1}{2}[H^1_{0,0},E_{n,1}^{r-1,s}]\text{ for }r\geq2.
\end{align*}
By a direct computation, we obtain
\begin{align*}
\phi(E_{i,j}^{r,s})&=F_{i,j}^{r,s}\text{ if }j>i+1\text{ and }1\leq i<j\leq n,\\
\phi(E_{i,i+1}^{r,s})&=F_{i,i+1}^{r,s},\\
\phi(H^r_{i,s})&=F_{i,i}^{r,s}-F_{i+1,i+1}^{r,s}+(r-1)u^{r-1}v^sdu-(r-1)(-1)^su^{-1-r}v^{s}du\text{ if }1\leq i\leq n-1,\\
\phi(H^r_{0,s})&=F_{n,n}^{r,s}-F_{1,1}^{r,s}+u^{r-1}v^sdu-(-1)^su^{-1-r}v^{s}du\text{ for }r\geq2,\\
\phi(E_{i,j}^{r,s})&=F_{i,j}^{r,s}\text{ if }i>j\text{ and }n\geq i\geq j\geq2,\\
\phi(E_{i,1}^{r,s})&=F_{i,1}^{r,s}\text{ if }1\leq i\leq n-1,\\
\phi(E_{n,1}^{r,s})&=F_{n,1}^{r,s}\text{ for }r\geq2,
\end{align*}
where $F_{i,j}^{r,s}=E_{i,j}u^rv^s-(-1)^{s}E_{j,i}u^{-r}v^s$.
Since $\{\phi(E_{i,j}^{r,s})\}\cup\{\phi(H_{i,2r+1})\}$ are linearly independent, the set $\{E_{i,j}^{r,s}\}\cup\{H_{i,2r+1}\}$ are linearly independent.
By the PBW theorem, if we fix an order of the set $\{H_{k,2r+1},E_{i,j}^{r,s}\}$ satisfying that $H_{k,2r+1}<E_{i,j}^{r,s}$, the set of ordered polynomials derived from $\{H_{k,2r+1},E_{i,j}^{r,s}\}$ are linearly independent in $BD(n)$. Thus, it is enough to show that

Let us take a grading on $BD(n)$ by $\text{deg}(B_{j,r})=r+1$ and $\text{deg}(H_{j,r})=r+1$. We denote the graded Lie algebra associated with $BD(n)$ by $\text{gr}BD(n)$ and the images of $\{E^{r,s}_{i,j}\}$ and $\{H^r_{i,s}\}$ in $\text{gr}BD(n)$ by $\{\bar{E}^{r,s}_{i,j}\}$ and $\{\bar{H}^r_{i,s}\}$.  The graded Lie algebra $\text{gr}BD(n)$ is generated by $\{\bar{H}_{i,2r+1},\bar{B}_{i,r}\}$ which satisfy the following relations:
\begin{gather*}
[\bar{H}_{i,2r+1},\bar{H}_{j,2s+1}]=0,\\
[\bar{H}_{i,2r+1},\bar{B}_{j,s}]=0,\\
[\bar{B}_{i,r+1},\bar{B}_{j,s}]=[\bar{B}_{i,r},\bar{B}_{j,s+1}],\\
[\bar{B}_{i,r},\bar{B}_{j,s}]=0\text{ if }a_{i,j}=0,\\
[\bar{B}_{i,r},[\bar{B}_{i,s},\bar{B}_{j,u}]]=0\text{ if }a_{i,j}=-1.
\end{gather*}
Let $L_+$ be a subalgebra generated by $\{\bar{B}_{i,r}\}$.
We can decompose $\text{gr}D(n)$ to the tensor product of $\{H_{i,2r+1}\}$ and $L_+$. By Theorem~\ref{plus}, we have a surjective homomorphism from $A(n)_+$ to $L_+$.
By the definition, $\bar{E}^{r,s}_{i,j}$ corresponds to $E_{i,j}u^rv^s$, $H^r_{i,s}$ corresponds to $(E_{i,i}-E_{i+1,i+1})u^rv^s+(r-1)u^rv^sdu$ and $H^r_{0,s}$ corresponds to $(E_{n,n}-E_{1,1})u^rv^s+u^{r-1}v^sdu$. Since the set of elements become the basis of $\bigoplus_{s\geq0}\bigoplus_{\alpha\in\Delta^+}\sum_{k=1}^{\text{dim}\mathfrak{g}_\alpha}x_\alpha^k v^s\oplus\Omega_+$, we find that the set of ordered polynomials derived from $\{H_{k,2r+1},E_{i,j}^{r,s}\}$ become the basis of $BD(n)$. Thus, $\phi$ becomes injective.
\end{proof}

\section{1-parameter twisted affine Yangian}
Let us recall the definition of the affine Yangian of type $A$.
\begin{Definition}
Suppose that $n\geq3$. The affine Yangian $Y_{\hbar,\ve}(\widehat{\mathfrak{sl}}(n))$ is the associative algebra  generated by $X_{i,r}^{+}, X_{i,r}^{-}, H_{i,r}$ $(i \in \{0,1,\cdots, n-1\}, r = 0,1)$ subject to the following defining relations:
\begin{gather*}
[H_{i,r}, H_{j,s}] = 0,\\
[X_{i,0}^{+}, X_{j,0}^{-}] = \delta_{i,j} H_{i, 0},\\
[X_{i,1}^{+}, X_{j,0}^{-}] = \delta_{i,j} H_{i, 1} = [X_{i,0}^{+}, X_{j,1}^{-}],\\
[H_{i,0}, X_{j,r}^{\pm}] = \pm a_{i,j} X_{j,r}^{\pm},\\
[\tilde{H}_{i,1}, X_{j,0}^{\pm}] = \pm a^A_{i,j}\left(X_{j,1}^{\pm}\right),\text{ if }(i,j)\neq(0,n-1),(n-1,0),\\
[\tilde{H}_{0,1}, X_{n-1,0}^{\pm}] = \mp \left(X_{n-1,1}^{\pm}+(\ve+\dfrac{n}{2}\hbar) X_{n-1, 0}^{\pm}\right),\\
[\tilde{H}_{n-1,1}, X_{0,0}^{\pm}] = \mp \left(X_{0,1}^{\pm}-(\ve+\dfrac{n}{2}\hbar) X_{0, 0}^{\pm}\right),\\
[X_{i, 1}^{\pm}, X_{j, 0}^{\pm}] - [X_{i, 0}^{\pm}, X_{j, 1}^{\pm}] = \pm a^A_{ij}\dfrac{\hbar}{2} \{X_{i, 0}^{\pm}, X_{j, 0}^{\pm}\}\text{ if }(i,j)\neq(0,n-1),(n-1,0),\\
[X_{0, 1}^{\pm}, X_{n-1, 0}^{\pm}] - [X_{0, 0}^{\pm}, X_{n-1, 1}^{\pm}]= \mp\dfrac{\hbar}{2} \{X_{0, 0}^{\pm}, X_{n-1, 0}^{\pm}\} + (\ve+\dfrac{n}{2}\hbar) [X_{0, 0}^{\pm}, X_{n-1, 0}^{\pm}],\\
(\ad X_{i,0}^{\pm})^{1+|a^A_{i,j}|} (X_{j,0}^{\pm})= 0 \ \text{ if }i \neq j, 
\end{gather*}
where $\widetilde{H}_{i,1}=H_{i,1}-\dfrac{\hbar}{2}H_{i,0}^2$ and $\{X,Y\}=XY+YX$.
\end{Definition}
In the case $\ve=-\dfrac{n}{2}\hbar$, we denote $Y_{\hbar,\ve}(\widehat{\mathfrak{sl}}(n))$ by $Y_{\hbar}(\widehat{\mathfrak{sl}}(n))$.

We denote the standard degreewise completion of $Y_{\hbar,\ve}(\widehat{\mathfrak{sl}}(n))$ by $\widetilde{Y}_{\hbar,\ve}(\widehat{\mathfrak{sl}}(n))$ (see Section 1.3 in \cite{MNT}). We set the degree of $\text{deg}(H_{i,r})=0,\text{deg}(X^\pm_{i,r})=\pm\delta_{i,0}$ and denote the set of degree $d$ elements by $Y_{\hbar,\ve}(\widehat{\mathfrak{sl}}(n))_d$. Let us set $A_i\in\widetilde{Y}_{\hbar,\ve}(\widehat{\mathfrak{sl}}(n))$ as
\begin{align*}
A_i&=\dfrac{\hbar}{2}\sum_{\substack{s\geq0\\u>v}}\limits E_{u,v}t^{-s}[E_{i,i},E_{v,u}t^s]+\dfrac{\hbar}{2}\sum_{\substack{s\geq0\\u<v}}\limits E_{u,v}t^{-s-1}[E_{i,i},E_{v,u}t^{s+1}]\\
&=\dfrac{\hbar}{2}\sum_{\substack{s\geq0\\u>i}}\limits E_{u,i}t^{-s}E_{i,u}t^s-\dfrac{\hbar}{2}\sum_{\substack{s\geq0\\i>v}}\limits E_{i,v}t^{-s}E_{v,i}t^s\\
&\quad+\dfrac{\hbar}{2}\sum_{\substack{s\geq0\\u<i}}\limits E_{u,i}t^{-s-1}E_{i,u}t^{s+1}-\dfrac{\hbar}{2}\sum_{\substack{s\geq0\\i<v}}\limits E_{i,v}t^{-s-1}E_{v,i}t^{s+1}.
\end{align*}
Similarly to Section~3 in \cite{GNW}, we define
\begin{align*}
J(h_i)&=\widetilde{H}_{i,1}+A_i-A_{i+1}\in \widetilde{Y}_{\hbar,\ve}(\widehat{\mathfrak{sl}}(n)).
\end{align*}
We also set $J(x^\pm_i)=\pm\dfrac{1}{2}[J(h_i),x^\pm_i]$.

Guay-Nakajima-Wendlandt \cite{GNW} defined the automorphism of $Y_{\hbar,\ve}(\widehat{\mathfrak{sl}}(n))$ by
\begin{equation*}
\tau_i=\exp(\ad(X^+_{i,0}))\exp(-\ad(X^-_{i,0}))\exp(\ad(X^+_{i,0})).
\end{equation*}
Let $\alpha$ be a positive real root of $\widehat{\mathfrak{sl}}(n)$. There is an element $w$ of the Weyl group of $\widehat{\mathfrak{sl}}(n)$ and a simple root $\alpha_j$ such that $\alpha=w\alpha_j$. Then we define a corresponding root vector by
\begin{equation*}
x^\pm_\alpha=\tau_{i_1}\tau_{i_2}\cdots\tau_{i_{p-1}}(x^\pm_{j}),
\end{equation*}
where $w =s_{i_1}s_{i_2}\cdots s_{i_{p-1}}$ is a reduced expression of $w$.
We can define $J(x^\pm_\alpha)$ as
\begin{equation*}
J(x^\pm_\alpha)=\tau_{i_1}\tau_{i_2}\cdots\tau_{i_{p-1}}J(x^\pm_{j}).
\end{equation*} 
\begin{Lemma}[(3.14) and Proposition 3.21 in \cite{GNW}]\label{J}
\begin{enumerate}
\item The following relations hold:
\begin{gather}
[J(h_i),X^\pm_{j,0}]=\pm a^A_{i,j}J(x^\pm_j)\text{ if }(i,j)\neq(0,n-1),(n-1,0),\\
[J(h_{0}),X^\pm_{n-1,0}]=\mp J(x^\pm_{n-1})+(\ve+\dfrac{n}{2}\hbar) X_{n-1, 0}^{\pm},\\
[J(h_{n-1}),X^\pm_{0,0}]=\mp J(x^\pm_{0})-(\ve+\dfrac{n}{2}\hbar) X_{0, 0}^{\pm},\\
[J(x^\pm_i), X_{j, 0}^{\pm}]=[X_{i, 0}^{\pm}, J(x^\pm_j)]\text{ if }(i,j)\neq(0,n-1),(n-1,0),\\
[J(x^\pm_{0}), X_{n-1, 0}^{\pm}]=[X_{0, 0}^{\pm}, J(x^\pm_{n-1})]+ (\ve+\dfrac{n}{2}\hbar) [X_{0, 0}^{\pm}, X_{n-1, 0}^{\pm}].
\end{gather}
\item
There exists $c_{\alpha,i}\in\mathbb{C}$ satisfying that
\begin{equation*}
[J(h_i),x^\pm_\alpha]=\pm(\alpha_i,\alpha)J(x_\alpha^\pm)\pm c_{\alpha,i}x_\alpha^\pm.
\end{equation*}
\end{enumerate}
\end{Lemma}
In \cite{L}, Lu gave the minimalistic presentation for the twisted Yangian. By the same proof as Theorem~3.1 in \cite{L}, we can give a minimalistic presentation of $BD(n)$.
\begin{Theorem}
The associative algebra $BD(n)$ is isomorphic to the associative algebra generated by
\begin{gather*}
\{H_{i,1},B_{i,r}\mid0\leq i\leq 2n-1,r=0,1\}
\end{gather*}
with the following relations:
\begin{gather*}
[H_{i,1},H_{j,1}]=0,\\
[H_{i,1},B_{j,0}]=2a^A_{i,j}B_{j,1},\\
[B_{i,1},B_{j,0}]=[B_{i,0},B_{j,1}]-2\delta_{i,j}H_{i,1},\\
[B_{i,0},B_{j,0}]=0\text{ if }a^A_{i,j}=0,\\
[B_{i,0},[B_{i,0},B_{j,0}]]=-B_{j,0}\text{ if }a^A_{i,j}=-1.
\end{gather*}
\end{Theorem}
By deforming this presentation, we can give a definition of the 1-parameter twisted affine Yangian associated with $\widehat{\mathfrak{so}}(n)$.
\begin{Definition}
We set $TY_{\hbar}(\widehat{\mathfrak{so}}(n))$ as an associative algebra generated by
\begin{gather*}
\{H_{i,1},B_{i,r}\mid0\leq i\leq 2n-1,r=0,1\}
\end{gather*}
with the following relations:
\begin{gather*}
[H_{i,1},H_{j,1}]=0,\\
[H_{i,1},B_{j,0}]=2a^A_{i,j}B_{j,1},\\
[B_{i,1},B_{j,0}]=[B_{i,0},B_{j,1}]+\dfrac{\hbar}{2}a_{i,j}\{B_{i,0},B_{j,0}\}-2\delta_{i,j}H_{i,1},\\
[B_{i,0},B_{j,0}]=0\text{ if }a^A_{i,j}=0,\\
[B_{i,0},[B_{i,0},B_{j,0}]]=-B_{j,0}\text{ if }a^A_{i,j}=-1.
\end{gather*}
\end{Definition}
In \cite{L}, Lu gave the coideal structure of the twisted Yangian. Using the relations \eqref{out} and \eqref{out1}, we can give the coideal structure of the twisted affine Yangian $TY_{\hbar}(\widehat{\mathfrak{so}}(n))$ by the same way as Theorem 4.1 in \cite{L}. We need to take another filteration for the affine Yangian $Y_\hbar(\widehat{\mathfrak{sl}}(n))$. For $d\geq0$, we set $F_d=\bigoplus_{n\geq d}Y_\hbar(\widehat{\mathfrak{sl}}(n))_d$. We set $\widehat{Y}_{\hbar}(\widehat{\mathfrak{sl}}(n))$ as $\varprojlim_dY_\hbar(\widehat{\mathfrak{sl}}(n))/F_d$.
\begin{Theorem}\label{coi}
There exists an algebra homomorphism
\begin{equation*}
\Phi\colon TY_{\hbar}(\widehat{\mathfrak{so}}(n))\to \widehat{Y}_{\hbar}(\widehat{\mathfrak{sl}}(n))
\end{equation*}
given by
\begin{align*}
\Phi(B_{i,0})&=X^+_{i,0}-X^-_{i,0},\\
\Phi(H_{i,1})&=2\widetilde{H}_{i,1}+\hbar\sum_{\alpha\in\Delta^+}\limits\sum_{k=1}^{\text{dim}\mathfrak{g}_\alpha}(\alpha_i,\alpha)x_\alpha^k x_\alpha^k,\\
\Phi(B_{i,1})&=X^+_{i,1}+X^-_{i,1}+\hbar\sum_{\alpha\in\Delta^+}\limits\sum_{k=1}^{\text{dim}\mathfrak{g}_\alpha}\{[x^+_{i,0},x_\alpha^k],x_\alpha^k\}-\dfrac{\hbar}{2}\{x^+_{i,0},h_{i,0}\},
\end{align*}
where $(\ ,\ )$ is an inner product on $\sum_{i=0}^n\limits\mathbb{Z}\alpha_i$ defined by $(\alpha_i,\alpha_j)=a_{i,j}$.
\end{Theorem}
Guay-Nakajima-Wendlandt \cite{GNW} gave the coproduct for the affine Yangian
\begin{equation*}
\Delta\colon Y_\hbar(\widehat{\mathfrak{sl}}(n))\to Y_\hbar(\widehat{\mathfrak{sl}}(n))\widehat{\otimes}Y_\hbar(\widehat{\mathfrak{sl}}(n)),
\end{equation*}
where $Y_\hbar(\widehat{\mathfrak{sl}}(n))\widehat{\otimes}Y_\hbar(\widehat{\mathfrak{sl}}(n))$ is the standard degreewise completion of $\otimes^2Y_\hbar(\widehat{\mathfrak{sl}}(n))$. Since $\Delta$ preserved the grading, we can extend $\Delta$ to
\begin{equation*}
\widetilde{\Delta}\colon \widehat{Y}_{\hbar}(\widehat{\mathfrak{sl}}(n))\to Y_\hbar(\widehat{\mathfrak{sl}}(n))\widehat{\otimes}Y_\hbar(\widehat{\mathfrak{sl}}(n))_{comp},
\end{equation*}
where $Y_\hbar(\widehat{\mathfrak{sl}}(n))\widehat{\otimes}Y_\hbar(\widehat{\mathfrak{sl}}(n))_{comp}$ is defined from $Y_\hbar(\widehat{\mathfrak{sl}}(n))\widehat{\otimes}Y_\hbar(\widehat{\mathfrak{sl}}(n))$ by the same way as $\widehat{Y}_{\hbar}(\widehat{\mathfrak{sl}}(n))$. Then, by the definition, we obtain
\begin{equation*}
\widetilde{\Delta}(\Phi(TY_{\hbar}(\widehat{\mathfrak{so}}(n))))\cup\Phi(TY_{\hbar}(\widehat{\mathfrak{so}}(n)))\otimes\widehat{Y}_{\hbar}(\widehat{\mathfrak{sl}}(n)).
\end{equation*}
Thus, we can consider that $TY_{\hbar}(\widehat{\mathfrak{so}}(n))$ is a coideal of the affine Yangian $Y_{\hbar}(\widehat{\mathfrak{sl}}(n))$. 

We note that the subalgebra of $TY_{\hbar}(\widehat{\mathfrak{so}}(n))$ generated by $\{B_{i,0}\}$ is not isomorphic to $U(\widehat{\mathfrak{so}}(n))$ but $U(\mathfrak{sl}(n)[u^{\pm1},v]^\omega)$ because $TY_{\hbar}(\widehat{\mathfrak{so}}(n))$ is a deformation of $\mathfrak{sl}(2n)[u^{\pm1},v]^\omega$. Moreover, by Theorem~\ref{J}, the homomorphism given in Theorem~\ref{coi} cannot be extended to the 2-parameter affine Yangian if $\ve+\dfrac{\hbar}{2}n$. In order to construct a relationship between twisted affine Yangians and $W$-algebras, we need to consider another invoulution of $\mathfrak{sl}(2n)[u^{\pm1},v]$.

\section{A central extension of $\mathfrak{sl}(2n)[u^{\pm1},v]^\tau$}
We give another involution of $\mathfrak{sl}(2n)$. Let us set $I_n=\{\pm1,\pm2,\cdots,\pm n\}$ and take $\{E_{i,j}\}_{i,j\in I_n}$ as  a matrix unit of $\mathfrak{gl}(2n)$ whose $(p,q)$ component is $\delta_{p,i}\delta_{q,j}$. We also define $\tau$ as an automorphism of $\mathfrak{gl}(2n)[u^{\pm1},v]$ by $\tau(E_{i,j}u^rv^s)=-(-1)^sE_{-j,-i}u^rv^s$. By the definition of $\tau$, $\mathfrak{sl}(2n)[u^{\pm1},v]$ is preserved by $\tau$. In the next section, we give a central extension of $\mathfrak{sl}(2n)[u^{\pm1},v]^\tau$. 

For $r\geq0$, we take matrices $(a^{2r}_{i,j})_{0\leq i\leq n,0\leq j\leq n}$ and  $(a^{2r+1}_{i,j})_{1\leq i\leq n-1,0\leq j\leq n}$ as follows:
\begin{align*}
a^{2r}_{i,j}&=\begin{cases}
2\delta_{i,j}-\delta_{i+1,j}-\delta_{i,j+1}&\text{ if }3\leq i\leq n-3,\\
2\delta_{j,n-2}-\delta_{j,n-3}-\delta_{j,n-1}-\delta_{j,n}&\text{ if }i=n-2,\\
2\delta_{j,n-1}-\delta_{j,n-2}&\text{ if }i=n-1,\\
2\delta_{j,n}-\delta_{j,n-2}&\text{ if }i=n,\\
2\delta_{j,2}-\delta_{j,0}-\delta_{j,1}-\delta_{j,3}&\text{ if }i=2,\\
2\delta_{j,1}-\delta_{j,2}&\text{ if }i=1,\\
2\delta_{j,0}-\delta_{j,2}&\text{ if }i=0,
\end{cases}\\
a^{2r+1}_{i,j}&=\begin{cases}
a^{2r}_{i,j}&\text{ if }2\leq i\leq n-2,\\
2\delta_{j,n-1}+2\delta_{j,n}-\delta_{j,n-2}&\text{ if }i=n-1,\\
2\delta_{j,1}+2\delta_{j,0}-\delta_{j,2}&\text{ if }i=1.
\end{cases}
\end{align*}
We note that $(a^{2r}_{i,j})$ is the Cartan matrix of $\widehat{\mathfrak{so}}(2n)$.
\begin{Definition}
We define $D(2n)$ as a Lie algebra generated by
\begin{equation*}
\{H_{i,r},X^\pm_{i,r},H_{n,2r},H_{0,2r},X^\pm_{-1,2r+1},X^\pm_{n+1,2r+1}\mid1\leq i\leq n-1,r\in\mathbb{Z}_{\geq0}\}
\end{equation*}
with the commutator relations:
\begin{gather}
[H_{i,r},H_{j,s}]=0,\label{5111}\\
[H_{i,r},X^\pm_{j,s}]=\pm a^r_{i,j}X^\pm_{j,r+s}\text{ for }0\leq j\leq n,\label{5112}\\
[X^+_{i,r},X^-_{j,s}]=\begin{cases}
\delta_{i,j}H_{i,r+s}\text{if }0\leq i,j\leq n,(i,j)\neq(n-1,n),(n,n-1),(0,1),(1,0),(n,n),(0,0),\\
H_{n-1,r+s}\text{if }i=j=n,\\
(-1)^{r+s}H_{1,r+s}\text{if }i=j=0,
\end{cases}\label{5113}\\
[X^+_{n-1,r},X^-_{n,s}]=-\delta_{r+s,odd}X^-_{n+1,r+s},\ [X^+_{n,r},X^-_{n-1,s}]=-\delta_{r+s,odd}X^+_{n+1,r+s},\label{5114}\\
[X^+_{0,r},X^-_{1,s}]=-\delta_{r+s,odd}X^+_{-1,r+s},\ [X^+_{1,r},X^-_{0,s}]=-\delta_{r+s,odd}X^-_{-1,r+s},\label{5115}\\
[X^\pm_{i,r},X^+_{n+1,s}]=0,[X^\pm_{i,r},X^-_{n+1,s}]=0,\text{ if }1\leq i\leq n-1,\label{5116}\\
[X^\pm_{i,r},X^+_{-1,s}]=0,[X^\pm_{i,r},X^-_{-1,s}]=0,\text{ if }2\leq i\leq n+1,\label{5117}\\
[X^+_{n-1,r},X^+_{n+1,s}]=2X^+_{n,r+s},\ [X^-_{n-1,r},X^+_{n+1,s}]=0,\label{5118}\\
[X^+_{n-1,r},X^-_{n+1,s}]=0,\ [X^-_{n-1,r},X^-_{n+1,s}]=-2X^-_{n,r+s},\label{5119}\\
[X^+_{n,r},X^+_{n+1,s}]=0,\ [X^-_{n,r},X^+_{n+1,s}]=-2x^{r+s}_{n-1,r+s},\label{5120}\\
[X^+_{n,r},X^-_{n+1,s}]=2X^+_{n-1,r+s},\ [X^-_{n,r},X^-_{n+1,s}]=0,\label{5121}\\
[X^+_{0,r},X^+_{-1,s}]=0,\ [X^-_{0,r},X^+_{-1,s}]=-2X^-_{1,r+s},\label{5122}\\
[X^+_{0,r},X^-_{-1,s}]=2X^+_{1,r+s},\ [X^-_{0,r},X^-_{-1,s}]=0,\label{5123}\\
[X^+_{1,r},X^+_{-1,s}]=2X^+_{0,r+s},\ [X^-_{1,r},X^+_{-1,s}]=0,\label{5124}\\
[X^+_{1,r},X^-_{-1,s}]=0,\ [X^-_{1,r},X^-_{-1,s}]=-2X^-_{0,r+s},\label{5125}\\
[X^\pm_{i,r},X^\pm_{i,s}]=0,\label{5126}\\
[X^\pm_{i,r+1},X^\pm_{j,s}]=[X^\pm_{i,r},X^\pm_{j,s+1}]\text{ if }0\leq i\leq j\leq n,(i,j)\neq(0,1),(n-1,n),\label{5127}\\
[X^\pm_{0,r+1},X^\pm_{1,s}]=-[X^\pm_{0,r},X^\pm_{1,s+1}],\ [X^\pm_{n-1,r+1},X^\pm_{n,s}]=-[X^\pm_{n-1,r},X^\pm_{n,s+1}],\label{5128}\\
[X^\pm_{i,r},X^\pm_{j,s}]=0\text{ if }a^0_{i,j}=0,0\leq i\neq j\leq n,(i,j)\neq(0,1),(1,0),(n-1,n),(n,n-1),\label{5129}\\
[X^\pm_{i,r},[X^\pm_{i,s},X^\pm_{j,u}]]=0\text{ if }a^0_{i,j}=-1,\label{5130}\\
[X^\pm_{n-1,0},[X^\pm_{n-1,0},X^\pm_{n,2r+1}]]=0,\label{5131}\\
[X^\pm_{n,0},[X^\pm_{n,0},X^\pm_{n-1,2r+1}]]=0,\label{5132}\\
[X^\pm_{1,0},[X^\pm_{1,0},X^\pm_{0,2r+1}]]=0,\label{5133}\\
[X^\pm_{0,0},[X^\pm_{0,0},X^\pm_{1,2r+1}]]=0,\label{5134}\\
[X^+_{1,1},[X^-_{1,0},(E_{1,1}-E_{2,2})t^u]]-[X^+_{1,0},[X^-_{1,1},(E_{1,1}-E_{2,2})t^u]]=0.\label{5135}
\end{gather}
\end{Definition}
Let us set $f^{r,s}_{i,j}=(E_{i,j}-(-1)^sE_{-j,-i})u^rv^s\in\mathfrak{sl}(2n)[u^{\pm1},v]$. Then, $\mathfrak{sl}(2n)[u^{\pm1},v]^\tau$ is spanned by $\{f^{r,s}_{i,j}\mid i,j\in I_n,r\in\mathbb{Z},s\in\mathbb{Z}_{\geq0}\}$. By a direct computation, we can construct a homomorphism $\pi\colon D(2n)\to\mathfrak{sl}(2n)[u^{\pm1},v]^\tau$ determined by
\begin{gather*}
H_{i,r}=\begin{cases}
f^r_{i,i}v^r-f^r_{i+1,i+1}v^r&\text{ if }1\leq i\leq n-1,\\
f^r_{n-1,n-1}v^r+f^r_{n,n}v^r&\text{ if }i=n\text{ and $r$ is even},\\
-(f^r_{1,1}v^r+f^r_{2,2}v^r)&\text{ if }i=0\text{ and $r$ is even},
\end{cases}\\
X^+_{i,r}=\begin{cases}
f^r_{i,i+1}v^r&\text{ if }1\leq i\leq n-1,\\
f^r_{n-1,-n}v^r&\text{ if }i=n,\\
(-1)^rf^r_{-2,1}uv^r&\text{ if }i=0,\\
f^r_{n,-n}v^r&\text{ if }i=n+1\text{ and $r$ is odd},\\
f^r_{-1,1}uv^r&\text{ if }i=-1\text{ and $r$ is odd},
\end{cases}\\
X^-_{i,r}=\begin{cases}
f^r_{i+1,i}v^r&\text{ if }1\leq i\leq n-1,\\
f^r_{-n,n-1}v^r&\text{ if }i=n,\\
(-1)^rf^r_{1,-2}u^{-1}v^r&\text{ if }i=0,\\
f^r_{-n,n}v^r&\text{ if }i=n+1\text{ and $r$ is odd},\\
f^r_{1,-1}u^{-1}v^r&\text{ if }i=-1\text{ and $r$ is odd}.
\end{cases}
\end{gather*}
Next, we will show that $D(2n)$ becomes a central extension of $\mathfrak{sl}(2n)[u^{\pm1},v]^\tau$.

Let $\{\alpha_i\mid0\leq i\leq n\}$ be the set of simple roots of $\widehat{\mathfrak{so}}(2n)$, $W$ be the Weyl group of $\widehat{\mathfrak{so}}(2n)$ and $\Delta$ be the set of roots of $\widehat{\mathfrak{so}}(2n)$. We also denote $Q=\sum_{i=0}^n\limits\mathbb{Z}\alpha_i,Q_+=\sum_{i=0}^n\limits\mathbb{Z}_{\geq0}\alpha_i\setminus\{0\},Q_-=-Q_+$ and $\delta=\alpha_0+\alpha_1+\alpha_{n-1}+\alpha_n+\sum_{j=2}^{n-2}\limits\alpha_j$. We also take a subset of $Q$ as follows:
\begin{align*}
\Delta^{\text{ex}}&=\Delta\cup\{\pm(2\sum_{j=i}^{n-2}\alpha_j+\alpha_{n-1}+\alpha_n)+s\delta,\\
&\qquad\qquad\qquad\pm(\alpha_{n-1}-\alpha_n)+s\delta,\pm(\alpha_0-\alpha_1)+s\delta\mid 1\leq i\leq n-2,s\in\mathbb{Z}\}.
\end{align*}
We set the degree of $D(2n)$ by $Q\times\mathbb{Z}_{\geq0}$ by assigning generators as follows:
\begin{gather*}
\text{deg}(H_{i,r})=(0,r),\ \text{deg}(X^\pm_{i,r})=\begin{cases}
(\pm\alpha_i,r)&\text{ if }0\leq i\leq n,\\
(\pm(\alpha_{n}-\alpha_{n-1}),r)&\text{ if }i=n+1,\\
(\pm(\alpha_{0}-\alpha_1),r)&\text{ if }i=0.
\end{cases}
\end{gather*}
We denote the space of elements of degree $(\alpha,k)$ in $D(2n)$ by $D(2n)_\alpha^k$. The following lemma can be proved by the same way as Lemma 3.1 in \cite{MRY}.
\begin{Lemma}\label{Lem128}
\textup{(1)}\ Let us set $\mathfrak{s}_\pm$ as a subspace of $D(2n)$ spanned by $\{\prod_{i=1}^v\ad(X^\pm_{u_i,r_i})X^\pm_{u_{v+1},r_{v+1}}\}$
and $\mathfrak{h}$ as the one spanned by $\{H_{i,r}\}$. Then, $D(2n)$ can be decomposed to 
\begin{equation*}
\mathfrak{s}^+\oplus\mathfrak{h}\oplus\mathfrak{s}^-\oplus\bigoplus_{r\in\mathbb{Z}_{\geq0}}\mathbb{C}X^+_{n+1,2r+1}\oplus\bigoplus_{r\in\mathbb{Z}_{\geq0}}\mathbb{C}X^-_{n+1,2r+1}\oplus\bigoplus_{r\in\mathbb{Z}_{\geq0}}\mathbb{C}X^+_{-1,2r+1}\oplus\bigoplus_{r\in\mathbb{Z}_{\geq0}}\mathbb{C}X^-_{-1,2r+1}.
\end{equation*}
\textup{(2)}\ If $\alpha\in Q_{\pm}$, $D(2n)_\alpha^k$ is spanned by 
\begin{equation*}
\{\prod_{i=1}^v\ad(X^\pm_{u_i,r_i})X^\pm_{u_{v+1},r_{v+1}}\mid \sum_{i=1}^{v+1}\alpha_{u_i}=\alpha,\sum_{i=1}^{v+1}r_i=k\}.
\end{equation*}
\end{Lemma}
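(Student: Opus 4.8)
The plan is to follow the proof of Lemma 3.1 in \cite{MRY} line by line, the only new feature being that the four families of generators $X^\pm_{n+1,2r+1}$ and $X^\pm_{-1,2r+1}$ must be carried along as a separate bookkeeping block. Write $V$ for the right-hand side of the decomposition in (1). Every generator of $D(2n)$ lies in $V$, and $D(2n)$ is spanned by iterated brackets of its generators, so it is enough to show that $V$ is stable under $\ad(g)$ for each generator $g$; an induction on bracket length then gives $D(2n)\subseteq V$, and hence $V=D(2n)$, which is the spanning half of (1).

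First I would clear away the easy stabilities. Relation \eqref{5112}, together with its analogues for $i\in\{0,n\}$ and for the extra generators, shows that $\ad(H_{i,r})$ acts diagonally, so $\ad(\mathfrak{h})V\subseteq V$; simultaneously this endows $D(2n)$ with the promised $Q\times\mathbb{Z}_{\geq0}$-grading, in which $\mathfrak{s}^+$ occupies the degrees lying in $Q_+$, $\mathfrak{s}^-$ those in $Q_-$, $\mathfrak{h}$ the degree $0$, and the extra generators the mixed degrees $\pm(\alpha_n-\alpha_{n-1})$ and $\pm(\alpha_0-\alpha_1)$. By construction $\ad(X^+_{j,s})\mathfrak{s}^+\subseteq\mathfrak{s}^+$, and $\ad(X^+_{j,s})\mathfrak{h}\subseteq\mathfrak{s}^+$ follows from \eqref{5112}, while the brackets of the $X^\pm_{j,s}$ with the extra generators are read off directly from \eqref{5116}--\eqref{5125} and land in $\mathfrak{s}^\pm$ or vanish. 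The statements for $\ad(X^-_{j,s})$ are the mirror image.

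The substantive point, exactly as in \cite{MRY}, is the cross-term stability $\ad(X^+_{j,s})\mathfrak{s}^-\subseteq V$ (and symmetrically $\ad(X^-_{j,s})\mathfrak{s}^+\subseteq V$). I would argue by induction on the length of a spanning monomial $w=\ad(X^-_{u_1,r_1})\cdots X^-_{u_{v+1},r_{v+1}}$ of $\mathfrak{s}^-$. The base case $v=0$ is precisely the list \eqref{5113}--\eqref{5115}, whose outputs lie in $\mathfrak{h}$ or among the extra generators $X^\mp_{n+1,\bullet},X^\pm_{-1,\bullet}$, all inside $V$. For the inductive step write $w=[X^-_{u_1,r_1},w']$ and expand $[X^+_{j,s},w]=[[X^+_{j,s},X^-_{u_1,r_1}],w']+[X^-_{u_1,r_1},[X^+_{j,s},w']]$ by the Jacobi identity: the first summand reduces by the base case either to $[\mathfrak{h},\mathfrak{s}^-]\subseteq\mathfrak{s}^-$ or to a bracket of an extra generator against $w'$, and the second is controlled by the induction hypothesis together with $\ad(X^-_{u_1,r_1})V\subseteq V$. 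Closing this recursion forces one to know that the brackets of $X^\pm_{n+1,\bullet}$ and $X^\pm_{-1,\bullet}$ against $\mathfrak{s}^-$ again lie in $V$, and it is here that \eqref{5116}--\eqref{5135}---in particular the Serre-type relations \eqref{5130}--\eqref{5134} and the shift relations \eqref{5127}--\eqref{5128}---are consumed. This interlocking of the ordinary Chevalley--Serre reduction with the extra-generator relations is the one step I expect to be laborious, and it must be verified case by case as in \cite{MRY}.

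The directness of the sum in (1) is then immediate from the grading: the degree sets $0$, $Q_+$, $Q_-$ and $\{\pm(\alpha_n-\alpha_{n-1}),\pm(\alpha_0-\alpha_1)\}$ are pairwise disjoint in $Q$, so no nontrivial homogeneous combination can cancel across the summands. Part (2) follows as a corollary: for $\alpha\in Q_+$ the degree-$\alpha$ component of every summand except $\mathfrak{s}^+$ vanishes, because $\mathfrak{h}$ and $\mathfrak{s}^-$ live in degrees $0$ and $Q_-$ while the extra generators live in mixed degrees outside $Q_+$; hence $D(2n)_\alpha^k=(\mathfrak{s}^+)_\alpha^k$, which by definition is spanned by the monomials $\prod_{i=1}^v\ad(X^+_{u_i,r_i})X^+_{u_{v+1},r_{v+1}}$ with $\sum_i\alpha_{u_i}=\alpha$ and $\sum_i r_i=k$, and the case $\alpha\in Q_-$ is symmetric.
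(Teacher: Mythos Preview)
Your proposal is correct and follows essentially the same approach as the paper, which simply remarks that the lemma ``can be proved by the same way as Lemma 3.1 in \cite{MRY}.'' Your outline---showing that the right-hand side $V$ is stable under $\ad$ of each generator, handling the cross-term $[X^+_{j,s},\mathfrak{s}^-]$ by induction on bracket length via Jacobi, bookkeeping the four extra families $X^\pm_{n+1,\bullet},X^\pm_{-1,\bullet}$ through relations \eqref{5113}--\eqref{5125}, and deducing directness and part~(2) from the $Q\times\mathbb{Z}_{\geq0}$-grading---is exactly the MRY argument with the additional summands tracked, and is in fact more explicit than what the paper writes.
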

For $0\leq i\leq n$, we set $s_i\in W$ as $s_i(\alpha_j)=\begin{cases}
-\alpha_i&\text{ if }i=j,\\
\alpha_i+\alpha_j&\text{ if }a^0_{i,j}=-1,\\
\alpha_j&\text{ otherwise.}
\end{cases}$ 
\begin{Lemma}\label{root}
By the action of the Weyl group, $\Delta^{\text{ex}}$ is generated from $\{\alpha_j\mid0\leq j\leq n\}$ and $\pm(\alpha_{n-1}-\alpha_n),\pm(\alpha_0-\alpha_1)$.
\end{Lemma}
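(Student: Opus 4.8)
The plan is to pass to an orthonormal realization of the root lattice and reduce the whole set $\Delta^{\mathrm{ex}}$ to two families. Realize $Q$ with vectors $e_1,\dots,e_n$ together with $\delta$, and set $\alpha_i=e_i-e_{i+1}$ for $1\le i\le n-1$, $\alpha_n=e_{n-1}+e_n$, and $\alpha_0=\delta-e_1-e_2$; a direct check of the inner products recovers the Cartan matrix $(a^0_{i,j})$, i.e. the two forks at the nodes $2$ and $n-2$. In these coordinates a telescoping sum gives $2\sum_{j=i}^{n-2}\alpha_j+\alpha_{n-1}+\alpha_n=2e_i$ for $1\le i\le n-2$, while $\alpha_{n-1}-\alpha_n=-2e_n$ and $\alpha_0-\alpha_1=\delta-2e_1$. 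Thus $\Delta^{\mathrm{ex}}$ consists of the affine real roots $\pm e_i\pm e_j+s\delta$ inherited from $\Delta$ and the ``long'' elements $\pm2e_i+s\delta$ of squared length $4$, and the two distinguished generators $\pm(\alpha_{n-1}-\alpha_n)$, $\pm(\alpha_0-\alpha_1)$ are themselves long elements, namely $\mp2e_n$ and $\mp2e_1+\delta$.

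Write $W=W(D_n)\ltimes t_{Q^\vee}$, where $W(D_n)$ is the group of signed permutations of $e_1,\dots,e_n$ with an even number of sign changes, $t_\mu$ is translation by $\mu$ in the coroot lattice $Q^\vee=Q(D_n)$, and $t_\mu(\gamma+s\delta)=\gamma+(s-(\gamma,\mu))\delta$ on level-zero elements. For the real roots I would invoke the standard fact that every affine real root of $D_n^{(1)}$ is a $W$-translate of a simple root: $W(D_n)$ acts transitively on the roots $\pm e_i\pm e_j$, and the translations supply every $\delta$-shift because $(\pm e_i\pm e_j,\mu)$ runs through all of $\mathbb{Z}$ as $\mu$ runs through $Q(D_n)$. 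Hence $W\cdot\{\alpha_0,\dots,\alpha_n\}$ exhausts the real roots; the imaginary roots $\mathbb{Z}_{\ne0}\delta$, being $W$-fixed, fall outside the Weyl-orbit statement.

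It remains to reach the long elements. Under $W(D_n)$ the vectors $\pm2e_i$ form a single orbit, since permutations move the index and a double sign change sends $2e_i$ to $-2e_i$; thus each is $W(D_n)$-conjugate to $-2e_n=\alpha_{n-1}-\alpha_n$. The point is the behaviour in the $\delta$-direction: because $(2e_i,\mu)\in2\mathbb{Z}$ for all $\mu\in Q(D_n)$, the translations shift a long element only by \emph{even} multiples of $\delta$, so $W\cdot(\alpha_{n-1}-\alpha_n)=\{\pm2e_i+2k\delta\}$ captures exactly the even shifts. The odd shifts come from the second seed: $\alpha_0-\alpha_1=\delta-2e_1$ is an odd shift of $-2e_1$, and applying $W(D_n)$ and the translations gives $W\cdot(\alpha_0-\alpha_1)=\{\pm2e_i+(2k+1)\delta\}$. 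The two orbits together produce all $\pm2e_i+s\delta$, which contains every long element of $\Delta^{\mathrm{ex}}$, yielding the inclusion $\Delta^{\mathrm{ex}}\subseteq W\cdot\{\alpha_0,\dots,\alpha_n,\pm(\alpha_{n-1}-\alpha_n),\pm(\alpha_0-\alpha_1)\}$.

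The main obstacle is this parity bookkeeping. For the real roots, of squared length $2$, there is no obstruction, since pairing with the coroot lattice already hits every integer and one representative per $W(D_n)$-orbit reaches all $\delta$-shifts; but for the long elements, of squared length $4$, the pairing lands in $2\mathbb{Z}$, so a single seed only reaches one parity class of $\delta$-shifts. The crux of the lemma is therefore the recognition that $\alpha_{n-1}-\alpha_n$ and $\alpha_0-\alpha_1$ represent the two different parity classes, so that their Weyl orbits jointly cover all long elements; once the coordinate identities above are in hand, only this parity check remains.
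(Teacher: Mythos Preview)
Your argument is correct and takes a genuinely different route from the paper's. The paper proceeds by an explicit chain of simple reflections: starting from $\alpha_{n-1}-\alpha_n+s\delta$ it applies $s_n$, then $s_{n-1}$, then $s_{n-2},\dots,s_1$, then $s_0$ (and so on) to walk through each long element $2e_i+s\delta$ in turn and to increment the $\delta$-coefficient by one on each full pass. Your approach replaces this hands-on induction by a structural argument: you invoke the semidirect product $W=W(D_n)\ltimes t_{Q^\vee}$, observe that $W(D_n)$ is transitive on $\{\pm2e_i\}$, and then use the parity fact $(2e_i,Q(D_n))\subset 2\mathbb{Z}$ to see that translations move a long element only by even multiples of $\delta$, so that the two seeds $-2e_n$ and $\delta-2e_1$ are needed to cover both parity classes.

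What each approach buys: the paper's explicit chain is constructive and dovetails with the later computations of the automorphisms $\tau_i$ (Lemma~\ref{Weyl} and Corollary~\ref{Cor25}), where one actually wants to track elements through specific reflections. Your argument is shorter and more transparent about \emph{why} two long seeds of different $\delta$-parity are genuinely required rather than one; it also makes clear that the imaginary roots $\mathbb{Z}_{\neq0}\delta$ are not reached (which is consistent with how the lemma is used in Theorem~\ref{Thm128}, where the case $\alpha\in\mathbb{Z}\delta$ is handled separately). One cosmetic point: your description of $\Delta^{\mathrm{ex}}$ as containing all $\pm2e_i+s\delta$ slightly overshoots the literal index range $1\le i\le n-2$ in the definition, but this is harmless for the inclusion you prove, and the paper's own chain passes through $\alpha_{n-1}+\alpha_n=2e_{n-1}$ as well.
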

\begin{proof}
Any root of $\Delta$ can be derived from $\{\alpha_j\mid0\leq j\leq n\}$ by the Weyl group. Thus, it is enough to show that $\pm(2\sum_{j=i}^{n-2}\alpha_j+\alpha_{n-1}+\alpha_n)+s\delta$ and $\pm(\alpha_{n-1}-\alpha_n)+s\delta$ can be derived from $\pm(\alpha_{n-1}-\alpha_n),\pm(\alpha_0-\alpha_1)$.

At first, we show the case that $s$ is even.
By the definition of $s_i$, we obtain the following relations:
\begin{align*}
s_n(\alpha_{n-1}-\alpha_n+s\delta)&=\alpha_{n-1}+\alpha_n+s\delta,\\
s_{n-1}(\alpha_{n-1}+\alpha_n+s\delta)&=-\alpha_{n-1}+\alpha_n+s\delta,\\
s_{i-1}(2\sum_{j=i}^{n-2}\alpha_j+\alpha_{n-1}+\alpha_n+s\delta)&=2\sum_{j=i-1}^{n-2}\alpha_j+\alpha_{n-1}+\alpha_n+s\delta,\\
s_{0}(2\sum_{j=1}^{n-2}\alpha_j+\alpha_{n-1}+\alpha_n+s\delta)&=-\alpha_{0}-\alpha_1+(s+1)\delta,\\
s_0(-\alpha_{0}-\alpha_1+(s+1)\delta)&=\alpha_{0}-\alpha_1+(s+1)\delta,\\
s_1(\alpha_0-\alpha_1+s\delta)&=\alpha_{0}+\alpha_1+s\delta\\
&=-2\sum_{j=1}^{n-2}\alpha_j-\alpha_{n-1}-\alpha_n+(s+1)\delta,\\
s_{i}(-2\sum_{j=i}^{n-2}\alpha_j-\alpha_{n-1}-\alpha_n+(s+1)\delta)&=-2\sum_{j=i+1}^{n-2}\alpha_j-\alpha_{n-1}-\alpha_n+(s+1)\delta,\\
s_{n-1}(-\alpha_{n-1}-\alpha_n+(s+1)\delta)&=\alpha_{n-1}-\alpha_n+(s+1)\delta.
\end{align*}
By these relations, $(2\sum_{j=i}^{n-2}\alpha_j+\alpha_{n-1}+\alpha_n)+s\delta,-(2\sum_{j=i}^{n-2}\alpha_j+\alpha_{n-1}+\alpha_n)+(s+1)\delta,\pm(\alpha_{n-1}-\alpha_n)+s\delta$ and $\pm(\alpha_0-\alpha_1)+s\delta$ can be derived from $\alpha_{n-1}-\alpha_n$ and $\alpha_0-\alpha_1$ for $s\geq0$. 
Similarly to the positive case, $(2\sum_{j=i}^{n-2}\alpha_j+\alpha_{n-1}+\alpha_n)-(s+1)\delta,-(2\sum_{j=i}^{n-2}\alpha_j+\alpha_{n-1}+\alpha_n)-s\delta,\pm(\alpha_{n-1}-\alpha_n)-(s+1)\delta$ and $\pm(\alpha_0-\alpha_1)-(s+1)\delta$ can be derived from $-(\alpha_{n-1}-\alpha_n)$ and $-(\alpha_0-\alpha_1)$ for $s\geq0$.
\end{proof}
By the relations \eqref{5129}-\eqref{5134}, we can take an automorphism of $D(2n)$ as
\begin{gather*}
\tau_i=\exp(\ad(X^+_{i,0}))\exp(-\ad(X^-_{i,0}))\exp(\ad(X^+_{i,0})).
\end{gather*}
By a direct computation, we obtain the following lemma.
\begin{Lemma}\label{Weyl}
The following relations hold for $1\leq i\leq n-2$:
\begin{gather*}
\tau_i(X^+_{j,r})=\begin{cases}
-X^-_{i,r}&\text{ if }i=j,\\
[X^+_{i,0},X^+_{j,r}]&\text{ if }a^0_{i,j}=-1,\\
(-1)^rX^+_{j,r}&\text{ if }(i,j)=(0,1),(1,0),(n-1,n),(n,n-1),\\
X^+_{i,r}&\text{ otherwise},
\end{cases}\\
\tau_i(X^-_{j,r})=\begin{cases}
-X^+_{i,r}&\text{ if }i=j,\\
-[X^-_{i,0},X^-_{j,r}]&\text{ if }a^0_{i,j}=-1,\\
(-1)^rX^-_{j,r}&\text{ if }(i,j)\neq(0,1),(1,0),(n-1,n),(n,n-1),\\
X^+_{j,r}&\text{ otherwise}.
\end{cases}
\end{gather*}
\end{Lemma}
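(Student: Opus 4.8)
The plan is to recognise each $\tau_i$ as the Tits lift of the simple reflection attached to the $\mathfrak{sl}_2$-triple $(X^+_{i,0},H_{i,0},X^-_{i,0})$ and to read off its action from the representation theory of this triple acting on $D(2n)$ by the adjoint representation. Concretely, I would first check that $\tau_i$ is a genuine automorphism, i.e. that $\ad(X^\pm_{i,0})$ is locally nilpotent on $D(2n)$ for $1\le i\le n-2$. Using \eqref{5126}, \eqref{5130}--\eqref{5134} together with \eqref{5116}, \eqref{5117}, \eqref{5124}, \eqref{5125}, each generator is annihilated by a fixed power of $\ad(X^\pm_{i,0})$; since the locally $\ad(X^\pm_{i,0})$-nilpotent elements form a subalgebra by the Leibniz rule, local nilpotency propagates to all of $D(2n)$ and the three exponentials in the definition of $\tau_i$ are well defined.

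The second ingredient is the standard fact about the Tits element $\theta=\exp(\ad e)\exp(-\ad f)\exp(\ad e)$ of an $\mathfrak{sl}_2$-triple $(e,h,f)$: acting on a locally finite module it carries the weight space of weight $\mu$ isomorphically onto that of weight $-\mu$, it acts by $+1$ on every trivial summand and by $-1$ on the zero-weight line of every adjoint (spin-one) summand, and on a highest-weight vector $v$ of weight $m$ it equals $(-1)^m(m!)^{-1}(\ad f)^m v$. For the triple $(X^+_{i,0},H_{i,0},X^-_{i,0})$ this operator is exactly $\tau_i$, so the whole computation reduces to determining, for each generator, its $\ad(H_{i,0})$-weight (which is $\pm a^0_{i,j}$ by \eqref{5112} at $r=0$) and the $\mathfrak{sl}_2^{(i)}$-submodule it generates.

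I would then run through the four cases. If $j=i$, relations \eqref{5126} and \eqref{5113} show that $X^+_{i,r}$ is a highest-weight vector of weight $2$ with $(\ad X^-_{i,0})^2X^+_{i,r}=-2X^-_{i,r}$, so $\{X^+_{i,r},H_{i,r},X^-_{i,r}\}$ is a copy of the adjoint and $\tau_i(X^+_{i,r})=\tfrac12(\ad X^-_{i,0})^2X^+_{i,r}=-X^-_{i,r}$, and symmetrically $\tau_i(X^-_{i,r})=-X^+_{i,r}$. If $a^0_{i,j}=-1$, then $X^+_{j,r}$ has weight $-1$, is a lowest-weight vector by \eqref{5113}, and satisfies $(\ad X^+_{i,0})^2X^+_{j,r}=0$ by \eqref{5130}; hence it spans a spin-$\tfrac12$ module and $\tau_i(X^+_{j,r})=[X^+_{i,0},X^+_{j,r}]$, while dually $\tau_i(X^-_{j,r})=-[X^-_{i,0},X^-_{j,r}]$. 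If $a^0_{i,j}=0$ with $(i,j)$ not special, then \eqref{5112}, \eqref{5113} and \eqref{5129} show $X^\pm_{j,r}$ spans a trivial module and is therefore fixed by $\tau_i$.

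The remaining and most delicate case is the special pair, which in the range $1\le i\le n-2$ occurs only for $(i,j)=(1,0)$. Here $X^+_{0,r}$ has weight $0$, and the sign $(-1)^r$ must come from a parity-dependent change of module type. By \eqref{5115}, $\ad(X^-_{1,0})X^+_{0,r}$ vanishes for even $r$ and equals $X^+_{-1,r}$ for odd $r$; for even $r$ one checks in addition that $\ad(X^+_{1,0})X^+_{0,r}=0$, so the module is trivial and $\tau_1$ acts by $+1=(-1)^r$. For odd $r$ the lowest-weight vector $X^+_{-1,r}$ is nonzero, and \eqref{5124} gives $\ad(X^+_{1,0})X^+_{-1,r}=2X^+_{0,r}$, exhibiting $X^+_{0,r}$ as the zero-weight vector of an adjoint summand, on which $\tau_1$ acts by $-1=(-1)^r$; the case of $X^-_{0,r}$ is identical. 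This last step is where I expect the real work to lie: the factor $(-1)^r$ is forced only by showing that for odd $r$ the submodule is genuinely the adjoint and not trivial, and this non-vanishing is invisible to \eqref{5129} (which deliberately excludes the pair $(1,0)$) — it is instead supplied by the auxiliary relations \eqref{5115} and \eqref{5124} involving $X^\pm_{-1}$, whose built-in parity dependence produces the sign. One must likewise keep careful track of the even/odd split of $r$ throughout, since the Serre relations \eqref{5131}--\eqref{5134} constrain only the odd-indexed generators.
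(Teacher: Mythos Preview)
Your strategy is correct and is genuinely different from the paper's. The paper simply expands the three exponentials in $\tau_i$ one factor at a time, tracking each term explicitly with the defining relations; it writes out in detail only the case $(i,j)=(n-1,n)$ with $r$ odd (using \eqref{5114}, \eqref{5118}, \eqref{5119}, \eqref{5131} step by step) and declares all other cases ``similar''. Your argument via the $\mathfrak{sl}_2$-module decomposition and the standard action of the Tits lift is more conceptual: every case reduces to identifying which irreducible $\mathfrak{sl}_2^{(i)}$-summand the generator sits in, and the sign $(-1)^r$ for the special pairs appears transparently as the distinction between the trivial and the adjoint representation at weight zero, driven by the parity in \eqref{5114}, \eqref{5115}, \eqref{5118}, \eqref{5124}. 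The paper's computation is self-contained; yours is shorter and explains the mechanism, at the cost of invoking a standard but external fact about Tits lifts.

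One step in your sketch needs more care than you indicate. For the special pair $(1,0)$ with even $r$ you assert that ``one checks in addition that $\ad(X^+_{1,0})X^+_{0,r}=0$'', but this is not an immediate consequence of any defining relation: \eqref{5129} explicitly excludes the pair $(0,1)$ and the Serre relation \eqref{5133} is stated only for odd index. You can close this as follows. Write $X^+_{0,2r}=\tfrac12[H_{1,1},X^+_{0,2r-1}]$ using \eqref{5112} and $a^1_{1,0}=2$; then Leibniz together with \eqref{5126} and \eqref{5133} gives $(\ad X^+_{1,0})^3X^+_{0,2r}=0$, so local nilpotency holds in the even case as well. Now observe that in a locally finite $\mathfrak{sl}_2$-module a weight-$0$ vector annihilated by $f$ necessarily spans a trivial summand; since $\ad(X^-_{1,0})X^+_{0,2r}=0$ by \eqref{5115}, the vanishing of $\ad(X^+_{1,0})X^+_{0,2r}$ follows automatically rather than being a separate check. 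With this supplied (and the analogous remark for $X^-_{0,2r}$), your proof goes through.
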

\begin{proof}
We only show the case that $(i,j)=(n-1,n)$ when $r$ is odd. The other cases can be proven in a similar way. At first, we prove the $+$ case. By \eqref{5131}, we have
\begin{align*}
\exp(\ad(X^+_{n-1,0}))X^+_{n,2r+1}&=X^+_{n,2r+1}+[X^+_{n-1,0},X^+_{n,2r+1}]=X^+_{n,2r+1}+[X^+_{n-1,0},X^+_{n,2r+1}].
\end{align*}
By \eqref{5114}, \eqref{5112} and \eqref{5118}, we obtain
\begin{align*}
&\quad-\ad(X^-_{n-1,0})(X^+_{n,2r+1}+[X^+_{n-1,0},X^+_{n,2r+1}])\\
&=-[X^-_{n-1,0},X^+_{n,2r+1}]-[X^-_{n-1,0},[X^+_{n-1,0},X^+_{n,2r+1}]]\\
&=-X^+_{n+1,2r+1}+[h_{n-1,0},X^+_{n,2r+1}]-[X^+_{n-1,0},[X^-_{n-1,0},X^+_{n,2r+1}]]\\
&=-X^+_{n+1,2r+1}-[X^+_{n-1,0},X^+_{n+1,2r+1}]=X^+_{n+1,2r+1}-2X^+_{n,2r+1}.
\end{align*}
By \eqref{5118} and \eqref{5114}, we have
\begin{align*}
-\dfrac{1}{2}\ad(X^-_{n-1,0})(-X^+_{n+1,2r+1}-2X^+_{n,2r+1})
&=0+X^+_{n+1,2r+1}
\end{align*}
Thus, we obtain
\begin{align*}
\exp(-\ad(X^-_{n-1,0}))\exp(\ad(X^+_{n-1,0}))X^+_{n,2r+1}&=-X^+_{n,2r+1}+[X^+_{n-1,0},X^+_{n,2r+1}].
\end{align*}
By \eqref{5131}, we obtain
\begin{align*}
\ad(X^+_{n-1,0})(-X^+_{n,2r+1}+[X^+_{n-1,0},X^+_{n,2r+1}])&=-[X^+_{n-1,0},X^+_{n,2r+1}]+[X^+_{n-1,0},[X^+_{n-1,0},X^+_{n,2r+1}]]\\
&=-[X^+_{n-1,0},X^+_{n,2r+1}]+0
\end{align*}
and
\begin{align*}
\ad(X^+_{n-1,0})(-[X^+_{n-1,0},X^+_{n,2r+1}])=0.
\end{align*}
Thus, we have obtained $\tau_{n-1}(X^+_{n,2r+1})=-X^+_{n,2r+1}$.

Next, we show the $-$ case. By \eqref{5114} and \eqref{5119}, we have
\begin{align*}
\exp(\ad(X^+_{n-1,0}))X^-_{n,2r+1}&=X^-_{n,2r+1}+[X^+_{n-1,0},X^-_{n,2r+1}]+\dfrac{1}{2}[X^+_{n-1,0},[X^+_{n-1,0},X^-_{n,2r+1}]]\\
&=X^-_{n,2r+1}-X^-_{n+1,2r+1}+0.
\end{align*}
By \eqref{5119}, we obtain
\begin{align*}
&\quad(-\ad(X^-_{n-1,0}))(X^-_{n,2r+1}-X^-_{n+1,2r+1})=-[X^-_{n-1,0},X^-_{n,2r+1}]+[X^-_{n-1,0},X^-_{n+1,2r+1}]\\
&=-[X^-_{n-1,0},X^-_{n,2r+1}]-2X^-_{n,2r+1}.
\end{align*}
By \eqref{5131}, we have
\begin{align*}
-\dfrac{1}{2}\ad(X^-_{n-1,0})(-[X^-_{n-1,0},X^-_{n,2r+1}]-2X^-_{n,2r+1})=0+[X^-_{n-1,0},X^-_{n,2r+1}].
\end{align*}
Then, we obtain
\begin{align*}
\exp(-\ad(X^-_{n-1,0}))\exp(\ad(X^+_{n-1,0}))X^-_{n,2r+1}&=-X^-_{n,2r+1}-X^-_{n+1,2r+1}.
\end{align*}
By \eqref{5114} and \eqref{5119}, we obtain
\begin{align*}
&\quad\ad(X^+_{n-1,0})(-X^-_{n,2r+1}-X^-_{n+1,2r+1})\\
&=X^-_{n+1,2r+1}-0
\end{align*}
and
\begin{align*}
\dfrac{1}{2}(\ad(X^+_{n-1,0}))(X^-_{n+1,2r+1})=0.
\end{align*}
Thus, we have obtained $\tau_{n-1}(X^-_{n,2r+1})=-X^-_{n,2r+1}$.
\end{proof}
\begin{Corollary}\label{Cor25}
The following relations hold
\begin{align*}
\tau_i(f_{j,k}^{r,s})&=\begin{cases}
-f_{k,j}^{r,s}&\text{ if }(j,k)=(i,i+1),\\
-f_{k,j}^{r,s}&\text{ if }(j,k)=(i+1,i),\\
(-1)^rf_{j,k}^{r,s}&\text{ if }(j,k)=(i,-(i+1)),(-(i+1),i),\\
(-1)^rf_{j,k}^{r,s}&\text{ if }(j,k)=(-(i+1),i),(-i,(i+1)),\\
-f_{j,k}^{r,s}&\text{ if }j=i,k\neq\pm(i+1),\\
-f_{-i-1,k}^{r,s}&\text{ if }j=-i,k\neq(i+1),\\
f_{i,k}^{r,s}&\text{ if }j=i+1,k\neq \pm i,\\
f_{-i,k}^{r,s}&\text{ if }j=-(i+1)\neq \pm i,\\
-f_{j,i+1}&\text{ if }k=i,j\neq\pm(i+1),\\
-f_{j,-i-1}&\text{ if }k=-i,j\neq(i+1),\\
f_{j,i}&\text{ if }k=i+1,j\neq  i,\\
f_{j,-i}&\text{ if }k=-(i+1),j\neq  i,\\
f_{j,k}^{r,s}&\text{ otherwise},
\end{cases}\\
\tau_n(f_{j,k}^{r,s})&=\begin{cases}
(-1)^rf_{j,k}^{r,s}&\text{ if }(j,k)=(n-1,n),\\
(-1)^rf_{j,k}^{r,s}&\text{ if }(j,k)=(n,n-1),\\
-f_{k,j}^{r,s}&\text{ if }(j,k)=(n-1,-n),(n,-(n-1)),\\
-f_{k,j}^{r,s}&\text{ if }(j,k)=(-n,n-1),(-(n-1),n),\\
-f_{-n,k}^{r,s}&\text{ if }j=n-1,k\neq\pm n,\\
-f_{n,k}^{r,s}&\text{ if }j=-(n-1),k\neq n,\\
f_{-(n-1),k}&\text{ if }j=n,k\neq \pm n-1,\\
f_{(n-1),k}^{r,s}&\text{ if }j=-n\neq \pm n-1,\\
-f_{j,-n}^{r,s}&\text{ if }k=n-1,j\neq\pm n,\\
-f_{j,n}^{r,s}&\text{ if }k=-(n-1),j\neq n,\\
f_{j,-(n-1)}^{r,s}&\text{ if }k=n,j\neq  n-1,\\
f_{j,(n-1)}^{r,s}&\text{ if }k=-n,j\neq n-1,\\
f_{j,k}^{r,s}&\text{ otherwise},
\end{cases}\\ 
\tau_0(f_{j,k}^{r,s})&=\begin{cases}
(-1)^rf_{j,k}^{r,s}&\text{ if }(j,k)=(1,2),\\
(-1)^rf_{j,k}^{r,s}&\text{ if }(j,k)=(2,1),\\
-f_{k,j}^{r-2,s}&\text{ if }(j,k)=(1,-2),(1,-2),\\
-f_{k,j}^{r+2,s}&\text{ if }(j,k)=(-2,1),(-1,2),\\
f_{-2,j}^{r+2,s}&\text{ if }j=1,k\neq\pm 2,\\
f_{2,k}^{r-2,s}&\text{ if }j=-1,k\neq 2,\\
-f_{-1,k}^{r+1,s}&\text{ if }j=2,k\neq \pm 1,\\
-f_{1,k}^{r-1,s}&\text{ if }j=-2\neq 1,\\
f_{j,-2}^{r-1,s}&\text{ if }k=1,j\neq\pm 2,\\
f_{j,2}^{r+1,s}&\text{ if }k=-1,j\neq 2,\\
-f_{j,-1}^{r-1,s}&\text{ if }k=2,j\neq  1,\\
-f_{j,1}^{r+1,s}&\text{ if }k=-2,j\neq 1,\\
f_{j,k}^{r,s}&\text{ otherwise}
\end{cases}
\end{align*}
for $j\neq \pm k$ and $j$ or $k$ is positive.
\end{Corollary}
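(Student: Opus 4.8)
The plan is to push the whole computation through the surjection $\pi$ onto the matrix Lie algebra $\mathfrak{sl}(2n)[u^{\pm1},v]^\tau$, where each $\tau_i$ becomes conjugation by an explicit matrix. Since $\pi$ is a homomorphism of Lie algebras and $\tau_i=\exp(\ad X^+_{i,0})\exp(-\ad X^-_{i,0})\exp(\ad X^+_{i,0})$ is built from $\ad$'s, one has $\pi\circ\tau_i=\bar\tau_i\circ\pi$, where $\bar\tau_i:=\exp(\ad\pi(X^+_{i,0}))\exp(-\ad\pi(X^-_{i,0}))\exp(\ad\pi(X^+_{i,0}))$. The matrices $\pi(X^\pm_{i,0})$ are nilpotent (each is a root vector, possibly multiplied by $u^{\pm1}$), so these $\ad$'s are locally nilpotent on $\mathfrak{sl}(2n)[u^{\pm1},v]^\tau$ and $\bar\tau_i$ is well defined; moreover $\exp(\ad Z)(Y)=gYg^{-1}$ for $g=\exp(Z)$, a finite sum. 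Thus $\bar\tau_i=\mathrm{Ad}(g_i)$ with $g_i=\exp(\pi(X^+_{i,0}))\exp(-\pi(X^-_{i,0}))\exp(\pi(X^+_{i,0}))$, and since the $f^{r,s}_{j,k}$ span $\operatorname{im}\pi=\mathfrak{sl}(2n)[u^{\pm1},v]^\tau$, evaluating $\tau_i$ on $f^{r,s}_{j,k}$ reduces to conjugating the matrix $f^{r,s}_{j,k}=(E_{j,k}-(-1)^sE_{-k,-j})u^rv^s$ by $g_i$.

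First I would treat the finite nodes $1\le i\le n-2$ and $i=n$. For these, $\pi(X^\pm_{i,0})$ is a constant matrix supported on two $2$-dimensional coordinate planes (for $1\le i\le n-2$ the planes $\langle e_i,e_{i+1}\rangle$ and $\langle e_{-i},e_{-i-1}\rangle$; for $i=n$ the planes $\langle e_{n-1},e_{-n}\rangle$ and $\langle e_n,e_{-(n-1)}\rangle$), and the triple product $g_i$ is the usual $\mathrm{SL}_2$ Tits lift, which acts on each plane by a right-angle rotation. Hence $g_i$ is a constant signed permutation of the basis $\{e_m\}_{m\in I_n}$, realizing the transpositions $i\leftrightarrow i+1$, $-i\leftrightarrow -(i+1)$ (resp. $n-1\leftrightarrow -n$, $n\leftrightarrow -(n-1)$) with signs and fixing every other $e_m$. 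Conjugating $f^{r,s}_{j,k}$ by such a matrix leaves the scalar $u^rv^s$ untouched and merely relabels $j,k$ with signs; running through the positions of $j,k$ relative to the transposed pair produces exactly the thirteen cases listed for $\tau_i$ and $\tau_n$.

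The main work, and the expected obstacle, is the affine node $i=0$. Here $\pi(X^+_{0,0})=(E_{-2,1}-E_{-1,2})u$ and $\pi(X^-_{0,0})=(E_{1,-2}-E_{2,-1})u^{-1}$, so $g_0$ is no longer a constant matrix: on the plane $\langle e_1,e_{-2}\rangle$ it equals $\left(\begin{smallmatrix}0&-u^{-1}\\u&0\end{smallmatrix}\right)$, and similarly on $\langle e_2,e_{-1}\rangle$. Conjugation by $g_0$ therefore swaps indices in $\{\pm1,\pm2\}$ and simultaneously shifts the power of $u$: each of the row index $j$ and the column index $k$ that lies in $\{\pm1,\pm2\}$ contributes a factor $u^{\pm1}$, so $r$ changes by $0,\pm1$ or $\pm2$ depending on how many of $j,k$ fall in that set. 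Correctly matching these $u$-degree shifts with the sign $(-1)^s$ in $f^{r,s}_{j,k}$, and with the sign produced when the partner index $-k$ or $-j$ also enters the special plane, is the delicate bookkeeping that yields the shifts and signs in the $\tau_0$ list. I would organize it using the twist symmetry $f^{r,s}_{j,k}=-(-1)^sf^{r,s}_{-k,-j}$, which lets one assume $j$ or $k$ positive and $j\neq\pm k$ exactly as in the statement and so halves the number of cases, and I would cross-check the outcome by specializing the general formulas at the images $\pi(X^\pm_{j,r})$ against Lemma \ref{Weyl}.
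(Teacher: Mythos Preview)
Your approach is correct and is the natural way to establish this result; the paper itself gives no proof beyond labeling it a corollary of Lemma~\ref{Weyl}. Pushing through $\pi$ and computing $\bar\tau_i=\mathrm{Ad}(g_i)$ with $g_i=\exp(\pi X^+_{i,0})\exp(-\pi X^-_{i,0})\exp(\pi X^+_{i,0})$ is exactly the right mechanism: for $1\le i\le n$ the matrix $g_i$ is a constant signed permutation (the Tits lift of $s_i$ on the two coordinate planes you name), and for $i=0$ the factors of $u^{\pm1}$ in $g_0$ produce precisely the shifts in the first superscript. Your plan to exploit the twist symmetry $f^{r,s}_{j,k}=-(-1)^sf^{r,s}_{-k,-j}$ to halve the casework and to cross-check against Lemma~\ref{Weyl} on the generators is sound.

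Two minor remarks. First, the computation you describe establishes the formulas for $\bar\tau_i$ on $\mathfrak{sl}(2n)[u^{\pm1},v]^\tau$; when the paper later applies these formulas to the lifts $\widetilde f^{\,r,s}_{j,k}\in D(2n)$ (in the proof of injectivity), that step uses Theorem~\ref{Thm128} to identify each non-imaginary root space with its image under $\pi$. Since the corollary is stated for $f^{r,s}_{j,k}$ and placed before Theorem~\ref{Thm128}, your reading is the intended one. Second, when you run the finite-node conjugation for $(j,k)=(i,-(i+1))$ etc., the sign you obtain is $(-1)^s$ (coming from the $v$-degree in the twist), not the printed $(-1)^r$; this is consistent with Lemma~\ref{Weyl} under $\pi(X^\pm_{j,r})=f^{0,r}_{\cdots}$, so the discrepancy is a typographical slip in the paper rather than an error in your argument.
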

By Lemma~\ref{Weyl}, we find that $\tau_i(D(2n)_\alpha^k)$ is contained in $D(2n)_{s_i(\alpha)}^k$
By the same way as Proposition 3.2 in \cite{MRY}, we obtain the following theorem.
\begin{Theorem}\label{Thm128}
We find that $\begin{cases}
\text{dim}D(2n)_\alpha^k=1&\text{ if }\alpha\in\Delta^{\text{ex}}\setminus\{k\delta\mid k\in\mathbb{Z}\setminus\{0\}\},\\
\text{dim}D(2n)_\alpha^k=0&\text{ if }\alpha\not\in\Delta^{\text{ex}}.
\end{cases}$
\end{Theorem}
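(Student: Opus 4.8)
The plan is to adapt the proof of Proposition~3.2 in \cite{MRY}, separating an upper bound obtained from the Weyl symmetry of the grading from a lower bound read off from the projection $\pi$. \emph{Reduction to base roots.} First I would use the automorphisms $\tau_i$. Each $\tau_i$ is invertible, and by the remark following Corollary~\ref{Cor25} it satisfies $\tau_i(D(2n)_\alpha^k)\subseteq D(2n)_{s_i(\alpha)}^k$; hence it restricts to a linear isomorphism of $D(2n)_\alpha^k$ onto $D(2n)_{s_i(\alpha)}^k$, so that $\dim D(2n)_\alpha^k=\dim D(2n)_{s_i(\alpha)}^k$ for all $i$ and $k$. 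Consequently the support $\{\alpha\notin\mathbb{Z}\delta\mid D(2n)_\alpha^k\neq0\text{ for some }k\}$ is $W$-stable, and by Lemma~\ref{root} every $\alpha\in\Delta^{\text{ex}}$ that is not a multiple of $\delta$ is $W$-conjugate to one of the base elements $\alpha_j$ $(0\le j\le n)$, $\pm(\alpha_{n-1}-\alpha_n)$ or $\pm(\alpha_0-\alpha_1)$. It therefore suffices to analyse the graded pieces at these base elements.

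\emph{Upper bound and vanishing.} At a simple root $\alpha_j$ the only monomials $\prod_{i=1}^v\ad(X^\pm_{u_i,r_i})X^\pm_{u_{v+1},r_{v+1}}$ of Lemma~\ref{Lem128}(2) with $\sum_i\alpha_{u_i}=\alpha_j$ are the length-one terms $X^+_{j,k}$, so $\dim D(2n)_{\alpha_j}^k\le1$; at the mixed base elements the decomposition of Lemma~\ref{Lem128}(1) shows the piece is spanned by a single $X^\pm_{n+1,k}$ or $X^\pm_{-1,k}$, which is present only in odd degree. Transporting back by the $\tau_i$ gives $\dim D(2n)_\alpha^k\le1$ for every non-imaginary $\alpha$. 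For $\alpha\notin\Delta^{\text{ex}}$ the $W$-stability of the support together with Lemma~\ref{root} (which identifies $\Delta^{\text{ex}}$ with the full $W$-orbit of the base elements) forces $D(2n)_\alpha^k=0$: moving $\alpha$ by $W$ leads to a degree at which the spanning monomials of Lemma~\ref{Lem128}(2) are annihilated by the Serre relations \eqref{5130}--\eqref{5134} and the shift relations \eqref{5127}--\eqref{5128}.

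\emph{Lower bound.} To attain the value $1$ I would invoke the graded homomorphism $\pi\colon D(2n)\to\mathfrak{sl}(2n)[u^{\pm1},v]^\tau$ constructed above. By the explicit formulas, $\pi$ sends each base generator to a nonzero homogeneous element: for instance $\pi(X^+_{j,k})=f^k_{j,j+1}v^k\neq0$ for every $k$, while $\pi(X^\pm_{n+1,k})$ and $\pi(X^\pm_{-1,k})$ are nonzero exactly when $k$ is odd. Hence each base piece $D(2n)_\alpha^k$ is nonzero for the parity of $k$ compatible with $\alpha$, and combined with the upper bound it is one-dimensional there; applying the $\tau_i$ once more propagates $\dim D(2n)_\alpha^k=1$ to all of $\Delta^{\text{ex}}\setminus\mathbb{Z}\delta$.

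\emph{Main obstacle.} I expect the genuinely delicate part to be the upper bound and the vanishing statement, namely verifying that no higher iterated bracket inside $\mathfrak{s}^\pm$ contributes a second independent vector at a fixed degree. This is exactly where the shift relations \eqref{5127}--\eqref{5128}, the quartic Serre relations \eqref{5130}--\eqref{5134} and the extra relation \eqref{5135} at the branch nodes must be combined as in \cite{MRY}. The twist $\tau$ adds bookkeeping that is absent in the untwisted case: the special generators $X^\pm_{n+1,2r+1}$ and $X^\pm_{-1,2r+1}$ live only in odd degree, so the parity of $k$ must be tracked throughout, and the nodes $i=0,n$, where $\tau_0$ and $\tau_n$ shift the $u$-degree by Corollary~\ref{Cor25}, have to be handled separately from the generic nodes $1\le i\le n-2$.
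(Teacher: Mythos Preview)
Your proposal is correct and follows essentially the same route the paper indicates: the paper's entire proof is the single sentence ``By the same way as Proposition~3.2 in \cite{MRY}'', and your outline---Weyl-group transport via the $\tau_i$ to reduce to the base elements of Lemma~\ref{root}, the spanning sets of Lemma~\ref{Lem128} for the upper bound, and the homomorphism $\pi$ for nonvanishing---is precisely the MRY argument adapted to the present setting. Your identification of the delicate step (controlling iterated brackets via the Serre and shift relations) and of the extra bookkeeping at the branch nodes is accurate; in fact you are more careful than the paper's statement itself, since you correctly track that at the mixed base elements $\pm(\alpha_{n-1}-\alpha_n)$ and $\pm(\alpha_0-\alpha_1)$ the graded piece is nonzero only in odd $v$-degree, a parity constraint the theorem as written suppresses but which is forced by Lemma~\ref{Lem128}(1) and the form of $\pi$. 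One small correction: relation \eqref{5135} lives in the imaginary direction and plays no role in the non-imaginary upper bound you are after---it enters only later, in the analysis of $D(2n)_{r\delta}^k$ in Section~4.
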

By Theorem~\ref{Thm128}, the kernel of $\pi$ is contained in $\bigoplus_{r\in\mathbb{Z}}\limits D(2n)_{r\delta}^k$. Since $[X^\pm_{i,0},D(2n)_{r\delta}^k]\cap D(2n)_{r\delta}^k$ is a empty set by Lemma~\ref{Lem128}, the kernel of $\pi$ is contained in the center of $D(2n)$.
Thus, we find that $D(2n)$ is a central extension of $\mathfrak{sl}(2n)[u^{\pm1},v]^\tau$.
\section{Embedding of $D(2n)$ into $A(2n)$}
In this section, we will show that $D(2n)$ becomes a subalgebra of $A(2n)$.
By a direct computation, we can construct a homomorphism
$\phi\colon D(2n)\to A(2n)$
determined by
\begin{gather*}
H_{i,r}=\begin{cases}
f^r_{i,i}v^r-f^r_{i+1,i+1}v^r&\text{ if }1\leq i\leq n-1,\\
f^r_{n-1,n-1}v^r+f^r_{n,n}v^r&\text{ if }i=n\text{ and $r$ is even},\\
-(f^r_{1,1}v^r+f^r_{2,2}v^r)+2u^{-1}v^{r}du&\text{ if }i=0\text{ and $r$ is even},
\end{cases}\\
X^+_{i,r}=\begin{cases}
f^r_{i,i+1}v^r&\text{ if }1\leq i\leq n-1,\\
f^r_{n-1,-n}v^r&\text{ if }i=n,\\
(-1)^rf^r_{-2,1}uv^r&\text{ if }i=0,\\
f^r_{n,-n}v^r&\text{ if }i=n+1\text{ and $r$ is odd},\\
f^r_{-1,1}uv^r&\text{ if }i=-1\text{ and $r$ is odd},
\end{cases}\\
X^-_{i,r}=\begin{cases}
f^r_{i+1,i}v^r&\text{ if }1\leq i\leq n-1,\\
f^r_{-n,n-1}v^r&\text{ if }i=n,\\
(-1)^rf^r_{1,-2}u^{-1}v^r&\text{ if }i=0,\\
f^r_{-n,n}v^r&\text{ if }i=n+1\text{ and $r$ is odd},\\
f^r_{1,-1}u^{-1}v^r&\text{ if }i=-1\text{ and $r$ is odd}.
\end{cases}
\end{gather*}
\begin{Theorem}\label{inj}
The homomorphism $\phi$ is injective.
\end{Theorem}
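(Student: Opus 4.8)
The plan is to exploit the fact that $\phi$ is a morphism of central extensions lying over the natural inclusion of Lie algebras. Composing $\phi$ with the Moody--Rao--Yokonuma isomorphism $\iota\colon A(2n)\xrightarrow{\ \sim\ }\mathfrak{sl}(2n)[u^{\pm1},v]\oplus\Omega(\mathbb{C}[u^{\pm1},v])/d\mathbb{C}[u^{\pm1},v]$, one checks on generators that the $\mathfrak{sl}(2n)[u^{\pm1},v]$-component of $\iota\circ\phi$ coincides with the composition of $\pi\colon D(2n)\to\mathfrak{sl}(2n)[u^{\pm1},v]^\tau$ with the inclusion $\mathfrak{sl}(2n)[u^{\pm1},v]^\tau\hookrightarrow\mathfrak{sl}(2n)[u^{\pm1},v]$. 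Writing $\omega$ for the remaining $\Omega(\mathbb{C}[u^{\pm1},v])/d\mathbb{C}[u^{\pm1},v]$-component, we then have $\iota\circ\phi(x)=(\pi(x),\omega(x))$, and therefore $\ker\phi=\ker\pi\cap\ker\omega$. Since $\pi$ exhibits $D(2n)$ as a central extension, $\ker\pi$ lies in the center of $D(2n)$; in particular $\ker\phi\subseteq\ker\pi$, so it suffices to prove that $\omega$ is injective on $\ker\pi$.

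Next I would use the $Q\times\mathbb{Z}_{\geq0}$-grading, with respect to which $\pi$, $\phi$ and $\omega$ are all homogeneous. By Theorem~\ref{Thm128}, every homogeneous component $D(2n)_\alpha^k$ with $\alpha\in\Delta^{\mathrm{ex}}\setminus\{k\delta\}$ is one-dimensional and is carried isomorphically onto the corresponding root space of $\mathfrak{sl}(2n)[u^{\pm1},v]^\tau$ by $\pi$; in particular $\pi$ is injective there, while $D(2n)_\alpha^k=0$ for $\alpha\notin\Delta^{\mathrm{ex}}$. Consequently $\ker\pi$ is concentrated in the imaginary degrees $(r\delta,k)$ with $r\in\mathbb{Z}$ (this range includes the toral degrees $\alpha=0$), and the problem reduces to showing that $\omega$ is injective on each $\ker\pi\cap\bigl(\bigoplus_k D(2n)_{r\delta}^k\bigr)$. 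This is precisely the part of the spectrum that Theorem~\ref{Thm128} does not control, which is why the center must be analysed by hand.

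For the core computation I would, for fixed $(r\delta,k)$, produce an explicit spanning set of $\ker\pi\cap D(2n)_{r\delta}^k$ by writing imaginary-root elements as iterated brackets of the real root vectors $X^\pm_{i,s}$ (using Lemma~\ref{Lem128} to bound the spanning set, and the braid symmetries $\tau_i$ of Lemma~\ref{Weyl} and Corollary~\ref{Cor25} together with the shift relations \eqref{5127}--\eqref{5135} to reduce every bracket to a normal form). Applying $\phi$ and expanding the brackets in $A(2n)$ by means of the cocycle $[z_1\otimes a_1,z_2\otimes a_2]=[z_1,z_2]\otimes a_1a_2+\kappa(z_1,z_2)\,a_2\,da_1$, each such element is sent to an explicit class in $\Omega(\mathbb{C}[u^{\pm1},v])/d\mathbb{C}[u^{\pm1},v]$. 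The generator-level source of these classes is the single differential term $2u^{-1}v^{r}\,du$ distinguishing $\phi(H_{0,2r})$ from $\pi(H_{0,2r})$; all other central contributions are produced from it by taking commutators. I would then match these classes against the standard graded basis of $\Omega(\mathbb{C}[u^{\pm1},v])/d\mathbb{C}[u^{\pm1},v]$ (computed as in Section~2 of \cite{MRY}) and verify that the images of a basis of $\ker\pi\cap D(2n)_{r\delta}^k$ are linearly independent, following the pattern of Proposition~3.5 of \cite{MRY}.

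The main obstacle is this last step: because the imaginary-root dimensions of $D(2n)$ are not pinned down by Theorem~\ref{Thm128}, I cannot reduce to a pure dimension count and must instead identify $\ker\pi$ explicitly and show directly that no nontrivial central combination lies in $\ker\omega$. Concretely, the difficulty is the careful bookkeeping of the differentials $a\,db$ produced by the cocycle when brackets are expanded: one must track how the $u^{-1}v^{r}\,du$ contributions, together with the sign-twist built into $\tau$ and the coefficients appearing in the $\tau_i$ (such as the factor $\tfrac12$ in Lemma~\ref{Weyl}), combine into classes in $\Omega(\mathbb{C}[u^{\pm1},v])/d\mathbb{C}[u^{\pm1},v]$, and then check their linear independence degree by degree.
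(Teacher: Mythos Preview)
Your overall strategy---reduce to the imaginary root spaces $D(2n)_{r\delta}^k$ and analyse the map there---is the same as the paper's. Where your proposal goes astray is in the plan for the core step. You aim to compute $\omega$ on a spanning set of $\ker\pi\cap D(2n)_{r\delta}^k$ and then ``verify that the images of a basis of $\ker\pi$ are linearly independent''. But you do not have a basis: you only have a spanning set, and passing from the spanning set to a basis requires knowing the dimension of $D(2n)_{r\delta}^k$, which is exactly what Theorem~\ref{Thm128} leaves open. The paper instead finds a spanning set $a(r,k)$ for all of $D(2n)_{r\delta}^k$ (not just the central part) and computes $\phi$ on it. For even $r$ the images are linearly independent and one is done. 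For odd $r$ one specific element of the spanning set, namely $[X^+_{1,1},\widetilde{f}_{2,1}^{r,k-1}]-[X^+_{1,0},\widetilde{f}_{2,1}^{r,k}]$, is sent to zero by $\phi$; so rather than trying to separate it via $\omega$ (impossible, since its $\omega$-image is zero too), the paper proves that this element is already zero in $D(2n)$.

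This vanishing is the crux, and it is not a differential-form computation at all: it is a Lie-algebra identity inside $D(2n)$. The case $r=1$ is literally the defining relation \eqref{5135}, and for larger odd $r$ the paper propagates the vanishing via a bracket identity (their \eqref{eq96}) together with the Weyl-group transport of \eqref{eq94}. Your proposal treats \eqref{5135} as one of many ``shift relations'' used to normalise brackets, but in fact it is the decisive extra relation whose sole purpose is to kill these would-be kernel elements; without recognising this, the bookkeeping of classes in $\Omega(\mathbb{C}[u^{\pm1},v])/d\mathbb{C}[u^{\pm1},v]$ that you describe cannot succeed, because the relevant central classes genuinely vanish and no amount of tracking $u^{-1}v^r\,du$ will produce linear independence. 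The fix is to redirect the odd-$r$ analysis from ``compute $\omega$'' to ``prove the relation holds in $D(2n)$''.
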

This section is devoted to the proof of Theorem~\ref{inj}.
By Theorem~\ref{Thm128}, we can define $\widetilde{f}_{i,j}^{r,s}\in D(2n)$ for $i\neq j$ by the same way as $f_{i,j}^{r,s}\in\mathfrak{sl}(2n)[u^{\pm1},v]^\tau$. By Theorem~\ref{Thm128}, we obtain the relations
\begin{align}
[H_{i,x},\widetilde{f}_{p,q}^{r,s}]&=(\delta_{i,p}-\delta_{i+1,p}-\delta_{i,q}+\delta_{i+1,q}-(-1)^x(\delta_{-i,p}-\delta_{-i,q}-\delta_{-i-1,p}+\delta_{-i-1,q}))\widetilde{f}_{p,q}^{r,s+x},\\
[\widetilde{f}_{i,j}^{r_1,s_1},\widetilde{f}_{p,q}^{r_2,s_2}]&=\delta_{j,p}\widetilde{f}_{i,q}^{r_1+r_2,s_1+s_2}-\delta_{i,q}\widetilde{f}_{p,j}^{r_1+r_2,s_1+s_2}\nonumber\\
&\quad-(-1)^{s_1}\delta_{i,-p}\widetilde{f}_{-j,q}^{r_1+r_2,s_1+s_2}+(-1)^{s_1}\delta_{j,-q}\widetilde{f}_{p,-i}^{r_1+r_2,s_1+s_2}\text{ if }(i,j)\neq(q,p),(-p,-q).
\end{align}

By Theorem~\ref{Thm128}, we find that the kernel of $\Phi$ is contained in $\bigoplus_{r\neq0,k\in\mathbb{Z}}D(2n)_{r\delta}^k$. We only consider the case that $r>0$. 
\begin{Lemma}
The subspace $D(2n)_{r\delta}^k$ is spanned by
\begin{align*}
a(r,k)&=\{[X^+_{i,0},\widetilde{f}_{i+1,i}^{r,k}]\mid1\leq i\leq n\}\cup\{[X^+_{0,s},\widetilde{f}_{1,-2}^{r,k}]\mid s=0,1\}\cup\{[X^+_{1,1},\widetilde{f}_{2,1}^{r,k-1}]-[X^+_{1,0},\widetilde{f}_{2,1}^{r,k}]\}
\end{align*}
\end{Lemma}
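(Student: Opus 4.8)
The plan is to mirror the strategy of Lemma~3.1 and Proposition~3.2 of \cite{MRY}, adapting it to the twisted/affine nodes. As in the excerpt we treat $r>0$ only, the case $r<0$ being entirely analogous. Since $r\delta\in Q_+$, Lemma~\ref{Lem128}(2) already tells us that $D(2n)_{r\delta}^k$ is spanned by the iterated commutators $\prod_{i=1}^v\ad(X^+_{u_i,r_i})X^+_{u_{v+1},r_{v+1}}$ with $\sum_i\alpha_{u_i}=r\delta$, $\sum_i r_i=k$ and all $u_i\in\{0,1,\dots,n\}$. This is my starting spanning set, which I will cut down to $a(r,k)$.

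First I would peel off the outermost bracket, writing such an element as $[X^+_{i,s},Y]$ with $Y\in D(2n)_{r\delta-\alpha_i}^{k-s}$. The weight $r\delta-\alpha_i$ is a real root of $\widehat{\mathfrak{so}}(2n)$ (an imaginary root minus a simple root) and in particular is not a nonzero multiple of $\delta$, so Theorem~\ref{Thm128} forces $\dim D(2n)_{r\delta-\alpha_i}^{k-s}\leq 1$. Hence $Y$ is a scalar multiple of the unique root vector there, which I identify with $\widetilde{f}^{\,r,k-s}_{p_i,q_i}$ for the matrix indices $(p_i,q_i)$ dictated by $r\delta-\alpha_i$ (for instance $(i+1,i)$ for $1\leq i\leq n$ and $(1,-2)$ for $i=0$, matching the factors appearing in $a(r,k)$). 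Summing over $i$ and $s$ this reduces the statement to
\[
D(2n)_{r\delta}^{k}=\sum_{i=0}^{n}\sum_{s=0}^{k}\mathbb{C}\,\bigl[X^{+}_{i,s},\widetilde{f}^{\,r,k-s}_{p_i,q_i}\bigr].
\]

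The heart of the argument is collapsing the $v$-grading index $s$. For the nodes $2\leq i\leq n$, including the folded pair at $(n-1,n)$, the shift relations \eqref{5127}--\eqref{5128} let me transfer the surplus $v$-power between the two factors, collapsing the whole family $\{[X^+_{i,s},\widetilde{f}^{\,r,k-s}_{i+1,i}]\}_s$ onto the single term $[X^+_{i,0},\widetilde{f}^{\,r,k}_{i+1,i}]$; together with the $i=1$ term this yields the $n$ elements of the first part of $a(r,k)$. At the affine node $i=0$ the corresponding shift relation \eqref{5128} carries a minus sign, so the same manipulation only identifies $[X^+_{0,s},\widetilde{f}^{\,r,k}_{1,-2}]$ up to sign according to the parity of $s$, leaving exactly the two representatives $s=0,1$. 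Finally, for the adjacent node $i=1$ the $s=0$ term is the first-family element $[X^+_{1,0},\widetilde{f}^{\,r,k}_{2,1}]$, while relation \eqref{5135}, propagated through \eqref{5127}--\eqref{5128}, shows that the $s=1$ term contributes the single further independent element $[X^+_{1,1},\widetilde{f}^{\,r,k-1}_{2,1}]-[X^+_{1,0},\widetilde{f}^{\,r,k}_{2,1}]$ and that all higher $s$ reduce to these; this is the last entry of $a(r,k)$.

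I expect the main obstacle to be this last step: establishing the shift and reduction identities at the level of the \emph{composite} elements $[X^+_{i,s},\widetilde{f}^{\,r,k-s}]$ rather than of the generators, while correctly tracking the sign twists introduced at the fork $\alpha_{n-1},\alpha_n$ and the affine node $\alpha_0$ (the factors $(-1)^{r+s}$ in \eqref{5113}--\eqref{5115} and the minus signs in \eqref{5128}). The delicate point is the asymmetry between the two twisted adjacencies: although both $(0,1)$ and $(n-1,n)$ satisfy sign-twisted shift relations, only the $\alpha_0$ direction carries a $u$-factor, so only it (and its neighbour $\alpha_1$, through \eqref{5135}) genuinely produces the extra imaginary ``$du$- and $dv$-directions'', whereas the fork $(n-1,n)$ must be shown to collapse completely into the $\{[X^+_{i,0},\widetilde{f}^{\,r,k}_{i+1,i}]\}$ family. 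Making this bookkeeping precise, and checking that no iterated bracket survives outside the listed $n+3$ elements, is the main work; the remaining simplifications are routine applications of \eqref{5126}--\eqref{5134}.
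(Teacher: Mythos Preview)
Your initial reduction---peeling off the outermost bracket and invoking Theorem~\ref{Thm128} to identify the cofactor with a unique $\widetilde f^{\,r,\bullet}$---matches the paper exactly, and leads to the same intermediate spanning set
\[
\bigl\{[X^+_{i,s},\widetilde f_{i+1,i}^{\,r,k-s}]:1\le i\le n,\ s\ge0\bigr\}\cup\bigl\{[X^+_{0,s},\widetilde f_{1,-2}^{\,r-1,k-s}]:s\ge0\bigr\}.
\]
The divergence is in how the $s$-index is collapsed.

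You propose to push $s$ down by invoking the generator shift relations \eqref{5127}--\eqref{5128}. The obstacle you flag is real and in fact blocks the direct argument: those relations are stated only for pairs of \emph{generators}, and there is no reason they lift verbatim to $[X^+_{i,s},\widetilde f^{\,r,v}_{i+1,i}]$ with a composite second factor. In the quotient $\mathfrak{sl}(2n)[u^{\pm1},v]^\tau$ the shift $[X^+_{i,s+1},f]=[X^+_{i,s},fv]$ holds trivially, so in $D(2n)$ the discrepancy
\[
[X^+_{i,s+1},\widetilde f^{\,r,v}_{i+1,i}]-[X^+_{i,s},\widetilde f^{\,r,v+1}_{i+1,i}]
\]
is merely \emph{central}, not zero. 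Hence for \emph{every} node $i$ (not only $i=0$) the naive shift leaves an $s=1$ residue, and your dichotomy ``sign-twisted nodes keep $s=0,1$, the others collapse to $s=0$'' is not the actual mechanism. The sign in \eqref{5128} is a red herring here.

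The paper exploits precisely this centrality, in two steps. First, since each such discrepancy is central, applying $\ad H_{i,1}$ to it gives zero; expanding via \eqref{5112} produces the recursions \eqref{eq92}--\eqref{eq93}, which show that for \emph{each} $i$ the family $\{[X^+_{i,s},\widetilde f^{\,r,k-s}]\}_s$ is spanned by the two terms $s=0,1$. Second, the braid operators $\tau_j=\exp(\ad X^+_{j,0})\exp(-\ad X^-_{j,0})\exp(\ad X^+_{j,0})$ are compositions of inner exponentials and therefore fix every central element pointwise; using Corollary~\ref{Cor25} one builds automorphisms $\widetilde\tau$ sending the pair $(X^+_{i,\bullet},\widetilde f_{i+1,i}^{\,r,\bullet})$ (resp.\ $(X^+_{0,\bullet},\widetilde f_{1,-2}^{\,r-1,\bullet})$) to $(X^+_{1,\bullet},\widetilde f_{2,1}^{\,r,\bullet})$, so the central ``$s=1$ minus $s=0$'' discrepancies for all $i$ are literally equal to the single element $[X^+_{1,1},\widetilde f_{2,1}^{\,r,k-1}]-[X^+_{1,0},\widetilde f_{2,1}^{\,r,k}]$ in $a(r,k)$. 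That Weyl-group step is what your sketch is missing; relation \eqref{5135} plays no role in this lemma (it enters only later, in the proof of Theorem~\ref{inj}).
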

\begin{proof}
By Theorem~\ref{Thm128}, we find that $D(2n)_{r\delta}^k$ is spanned by the set
\begin{align*}
\{[X^+_{i,s},\widetilde{f}_{i+1,i}^{r,v}]\mid1\leq i\leq n,s+v=k\}\cup\{[X^+_{0,s},\widetilde{f}_{1,-2}^{r-1,v}\mid s+v=k\}.
\end{align*}
Since $D(2n)$ is a central extension of $\mathfrak{sl}(2n)[u^{\pm1},v]^\tau$, we find that 
\begin{align*}
&[X^+_{i,s},\widetilde{f}_{i+1,i}^{r,v}]-[X^+_{i,0},\widetilde{f}_{i+1,i}^{r,s+v}],[X^+_{0,s},\widetilde{f}_{1,-2}^{r-1,v}]-[X^+_{0,0},\widetilde{f}_{1,-2}^{r-1,s+v}]
\end{align*}
are central elements of $D(2n)$. Thus, we obtain
\begin{align}
0&=[H_{i,1},[X^+_{i,s},\widetilde{f}_{i+1,i}^{r,v}]]-[H_{i,1},[X^+_{i,0},\widetilde{f}_{i+1,i}^{r,s+v}]]\nonumber\\
&=2[X^+_{i,s+1},\widetilde{f}_{i+1,i}^{r,v}]-2[X^+_{i,s+1},\widetilde{f}_{i+1,i}^{r,v+1}]-2[X^+_{i,1},\widetilde{f}_{i+1,i}^{r,s+v}]]+2[X^+_{i,0},\widetilde{f}_{i+1,i}^{r,s+v+1}],\label{eq92}\\
0&=[H_{0,1},[X^+_{0,s},\widetilde{f}_{1,-2}^{r-1,v}]]-[H_{0,1},[X^+_{0,0},\widetilde{f}_{1,-2}^{r-1,s+v}]]\\
&=2[X^+_{0,s+1},\widetilde{f}_{1,-2}^{r-1,v}]-2[X^+_{0,s},\widetilde{f}_{1,-2}^{r-1,v+1}]-2[X^+_{0,1},\widetilde{f}_{1,-2}^{r-1,s+v}]+2[X^+_{0,0},\widetilde{f}_{1,-2}^{r-1,s+v+1}].\label{eq93}
\end{align}
Thus, $A(2n)_{r\delta}^k$ is spanned by 
\begin{align*}
&\{[X^+_{i,s},\widetilde{f}_{i+1,i}^{r,k-s}]\mid1\leq i\leq n,s=0,1\}\cup\{[X^+_{0,s},\widetilde{f}_{1,-2}^{r-1,k-s}]\mid s=0,1\}.
\end{align*}
By Corollary~\ref{Cor25} and a direct computation, we find that, by composing $\tau_i$, we can construct $\widetilde{\tau}_1,\widetilde{\tau}_2$ satisfying that 
\begin{equation*}
\widetilde{\tau}_i(X^+_{i,k})=X^+_{1,k},\ \widetilde{\tau}_i(\widetilde{f}_{i,i+1}^{r,k})=\widetilde{f}_{2,1}^{r,k},\ 
\widetilde{\tau}_2(X^+_{0,k})=x^+_{1,k},\ \widetilde{\tau}_2(\widetilde{f}_{1,-2}^{r-1,k})=\widetilde{f}_{2,1}^{r,k}
\end{equation*}
for $1\leq i\leq n$. Then, we obtain
\begin{align}
[\widetilde{\tau}_1(X^+_{i,1}),\widetilde{\tau}_1(\widetilde{f}_{i+1,i}^{r,k-1})]-[\widetilde{\tau}_1(X^+_{i,0}),\widetilde{\tau}_1(\widetilde{f}_{i+1,i}^{r,k})]=[X^+_{i,1},\widetilde{f}_{i+1,i}^{r,k-1}]-[X^+_{0,0},\widetilde{f}_{i+1,i}^{r,k}],\label{eq94}\\
[\widetilde{\tau}_2(X^+_{0,1}),\widetilde{\tau}_2(\widetilde{f}_{1,-2}^{r-1,k-1})]-[\widetilde{\tau}(X^+_{0,0}),\widetilde{\tau}(\widetilde{f}_{1,-2}^{r-1,k})]=[X^+_{0,1},\widetilde{f}_{1,-2}^{r-1,k-1}]-[X^+_{0,0},\widetilde{f}_{1,-2}^{r-1,k}]\label{eq95}
\end{align}
because the right hand sides are central elements of $D(2n)$.
\end{proof}
Now, we start to prove Theorem~\ref{inj}. In the case that $r$ is even, the images of $a(r,k)$ via $\Phi$ are linearly independent and non-zero. However, in the case that $r$ is odd, we obtain
\begin{equation*}
[\Phi(X^+_{1,1}),\Phi(\widetilde{f}_{2,1}^{r,k})]-[\Phi(X^+_{1,0}),\Phi(\widetilde{f}_{2,1}^{r,k})]=0.
\end{equation*}
Thus, we need to show $[X^+_{1,1},\widetilde{f}_{2,1}^{2r,k}]-[x^+_{1,0},\widetilde{f}_{2,1}^{2r+1,k}]=0$.
The case $r=1$ is nothing but the defining relation.
Next, we consider the case $r\geq3$. By a direct computation, we obtain
\begin{align*}
[X^+_{n,2},f_{-n,n-1}^{u,2r-1}]&=-\dfrac{1}{2}[X^+_{n,2},[X^+_{n-1,0},f_{-(n-1),n-1}^{u,2r-1}]]\\
&=-\dfrac{1}{2}[X^+_{n-1,0},[X^+_{n,2},f_{-(n-1),n-1}^{u,2r-1}]]=[X^+_{n-1,0},f_{n,n-1}^{u,2r+1}]
\end{align*}
and
\begin{align*}
[X^+_{n,0},f_{-n,n-1}^{u,2r+1}]&=-\dfrac{1}{2}[X^+_{n,0},[X^+_{n-1,2},f_{-(n-1),n-1}^{u,2r-1}]]\\
&=-\dfrac{1}{2}[X^+_{n-1,2},[X^+_{n,0},f_{-(n-1),n-1}^{u,2r-1}]]=[X^+_{n-1,2},f_{n,n-1}^{u,2r-1}].
\end{align*}
Thus, we obtain
\begin{equation}
[X^+_{n,2},f_{-n,n-1}^{u,2r-1}]-[X^+_{n,0},f_{-n,n-1}^{u,2r+1}]=[X^+_{n-1,0},f_{n,n-1}^{u,2r+1}]-[X^+_{n-1,2},f_{n,n-1}^{u,2r-1}].\label{eq96}
\end{equation}
By \eqref{eq92}, \eqref{eq96} and \eqref{eq94}, we find the relation $[X^+_{1,1},\widetilde{f}_{2,1}^{2r,k}]-[X^+_{1,0},\widetilde{f}_{2,1}^{2r+1,k}]=0$.
We complete the proof of Theorem~\ref{inj}.
\section{Finite presentation of $D(2n)$}
The associative algebra $D(2n)$ has the following finite presentation.
\begin{Definition}
We define $\widetilde{D}(2n)$ as an associative algebra generated by
\begin{equation*}
\{h_{i,r},x^\pm_{i,r},h_{n,0},\mid0\leq i\leq n-1,1\leq j\leq n,r=0,1\}
\end{equation*}
with the relations:
\begin{gather*}
[h_{i,r},h_{j,s}]=0,\\
[h_{i,0},x^\pm_{j,0}]=\pm a^r_{i,j}x^\pm_{j,0},\\
[x^+_{i,0},x^-_{j,0}]=\delta_{i,j}h_{i,0},\\
[x^+_{i,1},x^-_{i,0}]=\begin{cases}
h_{i,1}&\text{ if }1\leq i\leq n-1,\\
h_{n-1,1}&\text{ if }i=1,\\
-h_{1,r+s}&\text{ if }j=0
\end{cases}\\
[x^\pm_{i,1},x^\pm_{i,0}]=0,\\
[x^\pm_{i,0},x^\pm_{j,0}]=0\text{ if }a^0_{i,j}=0,\\
[x^\pm_{i,0},[x^\pm_{i,0},x^\pm_{j,0}]]=0\text{ if }a^0_{i,j}=-1,\\
[[x^+_{n-1,1},x^-_{n-1,1}],x^\pm_{n,0}]=0,\\
[[x^+_{0,1},x^-_{0,1}],x^\pm_{1,0}]=0,\\
[x^+_{1,1},[x^-_{1,0},(E_{1,1}-E_{2,2})t^u]]-[x^+_{1,0},[x^-_{1,1},(E_{1,1}-E_{2,2})t^u]]=0.
\end{gather*}
\end{Definition}
\begin{Theorem}\label{Mini}
The associative algebra $\widetilde{D}(2n)$ is isomorphic to $D(2n)$.
\end{Theorem}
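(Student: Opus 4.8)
The plan is to exhibit mutually inverse algebra homomorphisms
$$\Phi \colon \widetilde{D}(2n) \longrightarrow D(2n), \qquad \Psi \colon D(2n) \longrightarrow \widetilde{D}(2n),$$
so that $\widetilde{D}(2n) \cong D(2n)$. The map $\Phi$ is essentially forced: it sends each finite generator to the element of $D(2n)$ with the same name, i.e. $h_{i,r} \mapsto H_{i,r}$, $x_{i,r}^\pm \mapsto X_{i,r}^\pm$ for $r=0,1$ and $h_{n,0} \mapsto H_{n,0}$. To see $\Phi$ is well defined I would check that every defining relation of $\widetilde{D}(2n)$ holds in $D(2n)$; each is either literally one of \eqref{5111}--\eqref{5135} specialized to $r,s \in \{0,1\}$ or an immediate consequence of them. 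For instance $[[x_{n-1,1}^+,x_{n-1,1}^-],x_{n,0}^\pm]=0$ follows because $[X_{n-1,1}^+,X_{n-1,1}^-]=H_{n-1,2}$ by \eqref{5113} and then $[H_{n-1,2},X_{n,0}^\pm]=\pm a^2_{n-1,n}X_{n,2}^\pm=0$ by \eqref{5112}, using $a^2_{n-1,n}=0$. Since $D(2n)$ is generated by its elements of degree $0$ and $1$ (by \eqref{5112}--\eqref{5113} every $X_{i,r}^\pm$ and $H_{i,r}$ is obtained from degree $\le 1$ generators by iterated brackets), $\Phi$ is surjective.

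\textbf{The inverse map.} The substance of the theorem is the construction of $\Psi$, which requires first producing, inside $\widetilde{D}(2n)$, elements $\widehat{H}_{i,r},\widehat{X}_{i,r}^\pm$ for all $r\ge 0$ to serve as images of the infinitely many generators of $D(2n)$. I would define these recursively from the degree $\le 1$ generators, raising the degree through the adjoint action of the degree-one Cartan elements: set
$$\widehat{X}_{i,r+1}^\pm := \pm\tfrac{1}{2}[h_{i,1},\widehat{X}_{i,r}^\pm], \qquad \widehat{H}_{i,r}:=[\widehat{X}_{i,r}^+,x_{i,0}^-]$$
for the nodes with $a_{i,i}=2$ available at odd degree, using a neighbouring node for $i\in\{0,n\}$ whose Cartan generators occur only in even degree; the auxiliary families $\widehat{X}_{n+1,2r+1}^\pm,\widehat{X}_{-1,2r+1}^\pm$ and the even-only $\widehat{H}_{n,2r},\widehat{H}_{0,2r}$ are then obtained from the bracket relations \eqref{5114}--\eqref{5125} read as definitions. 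With these in hand, $\Psi$ is declared on generators by $H_{i,r}\mapsto\widehat{H}_{i,r}$, $X_{i,r}^\pm\mapsto\widehat{X}_{i,r}^\pm$, and I would verify that all of \eqref{5111}--\eqref{5135} hold for the hatted elements. The verification is by induction on $r$ (and on the height of the root $\alpha$), the base cases $r\in\{0,1\}$ being exactly the relations imposed in $\widetilde{D}(2n)$; the induction step raises the degree by bracketing with $h_{i,1}$ and applies the Jacobi identity together with the lower relations already established.

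\textbf{Main obstacle.} The hard part is the inductive step for the relations interacting nontrivially with the twisting, namely the sign-twisted degree-shift relations \eqref{5127}--\eqref{5128}, the Serre relations \eqref{5130}--\eqref{5134} involving the odd-degree elements, relation \eqref{5135}, and above all the internal consistency of the auxiliary families $\widehat{X}_{\pm1,2r+1}^\pm,\widehat{X}_{n+1,2r+1}^\pm$: one must check that the element produced by the recursion is independent of the order in which the brackets are taken and transforms correctly under the twist. To control these I would exploit the automorphisms $\tau_i$ of Lemma~\ref{Weyl} and Corollary~\ref{Cor25}, which permute the graded pieces according to the Weyl group action and thereby reduce the many cases to a handful of representatives, much as in the proof of Theorem~\ref{inj}; the spanning statement of Lemma~\ref{Lem128} then guarantees that once these representatives are settled, the remaining relations follow. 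The delicate bookkeeping of the signs $(-1)^r$ attached to the odd-degree brackets at the nodes $(0,1)$ and $(n-1,n)$ is where I expect the computation to be most error prone.

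\textbf{Conclusion.} Finally I would check $\Phi\circ\Psi=\id$ and $\Psi\circ\Phi=\id$ on generators. The first is immediate because $\Phi(\widehat{X}_{i,r}^\pm)=X_{i,r}^\pm$ and $\Phi(\widehat{H}_{i,r})=H_{i,r}$ by construction, the recursion for the hatted elements matching \eqref{5112}--\eqref{5125} as they hold in $D(2n)$. The second holds because $\Psi$ fixes the degree $\le 1$ generators of $\widetilde{D}(2n)$, which generate the whole algebra. Since both composites are the identity on generating sets, they are the identity maps, and $\Phi,\Psi$ are mutually inverse isomorphisms; in particular the finite-dimensionality of the graded components recorded in Theorem~\ref{Thm128} is consistent with $\Phi$ being bijective.
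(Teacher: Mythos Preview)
The paper does not give a proof of Theorem~\ref{Mini}: the entire argument is the sentence ``The proof is given by the same way as Section~4 in \cite{HU}.'' So there is nothing to compare your proposal against beyond the referenced paper, which your outline mirrors closely.

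Your strategy is the standard one for showing that a minimalistic presentation agrees with a full Drinfeld-type presentation, and it is sound in outline. A few minor remarks: your check that $[[x_{n-1,1}^+,x_{n-1,1}^-],x_{n,0}^\pm]=0$ in $D(2n)$ via $H_{n-1,2}$ and $a^2_{n-1,n}=0$ is correct and is exactly the kind of verification needed; your recursive definition of the hatted elements using $[h_{i,1},-]$ is the right mechanism (note that for $i\in\{0,n\}$ you must use a neighbouring Cartan $h_{1,1}$ or $h_{n-1,1}$, since $a^1_{1,0}=a^1_{n-1,n}=2$ while $h_{0,1},h_{n,1}$ do not exist---you already flag this); and your identification of the sign-twisted relations \eqref{5127}--\eqref{5128} and the Serre relations \eqref{5131}--\eqref{5134} at the branching nodes as the crux of the induction is accurate. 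The use of the Weyl-group automorphisms $\tau_i$ to reduce cases is indeed how this is handled in \cite{HU}, and the paper itself invokes the same device in the proof of Theorem~\ref{inj}. In short, your plan matches what the cited reference does; there is no genuine gap.
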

The proof is given by the same way as Section 4 in \cite{HU}.

Moreover, the relations $[[x^+_{0,1},x^-_{0,1}],x^-_{1,0}]=0$ and $[[x^+_{n-1,1},x^-_{n-1,1}],x^-_{n,0}]=0$ can be derived from $[[x^+_{0,1},x^-_{0,1}],x^+_{1,0}]=0$ and $[[x^+_{n-1,1},x^-_{n-1,1}],x^+_{n,0}]=0$ by using the Weyl group action. Thus, we can reduce these two relations.
\section{Suggestion of the twisted affine Yangian of type $D$}
\begin{Definition}
We define an associative algebra $ty_{\hbar,\ve}(\widehat{\mathfrak{so}}((2n))$ generated by
\begin{equation*}
\{h_{i,0},x^\pm_{i,r},h_{j,1},\mid0\leq i\leq n,1\leq j\leq n-1,r=0,1\}
\end{equation*}
with the relations:
\begin{gather}
[h_{i,0},h_{j,0}]=0,\ [h_{i,0},h_{j,1}]=0\label{rel1}\\
[h_{i,0},x^\pm_{j,0}]=\pm a^r_{i,j}x^\pm_{j,0},\label{rel2}\\
[h_{i,1},x^+_{j,0}]=a^1_{i,j}x^+_{j,1}+\begin{cases}
2(\ve+1)\hbar x^+_{j,0}&\text{ if }(i,j)=(1,0),\\
-(\ve+3)\hbar x^+_{j,0}&\text{ if }(i,j)=(2,0),\\
0&\text{ otherwise}
\end{cases},\label{rel3}\\
[h_{i,1},x^-_{j,0}]=-a^1_{i,j}x^-_{j,1}+\begin{cases}
\hbar x^-_{j,0}&\text{ if }(i,j)=(2,0),\\
0&\text{ otherwise}
\end{cases},\label{rel3.5}\\
[x^+_{i,0},x^-_{j,0}]=\delta_{i,j}h_{i,0},\label{rel5}\\
[x^+_{i,1},x^-_{i,0}]=\begin{cases}
h_{i,1}&\text{ if }1\leq i\leq n-1,\\
h_{n-1,1}+\hbar(\ve+1)f_{n,n}&\text{ if }i=n,\\
-h_{1,1}+2\hbar\ve f_{2,2}-\hbar f_{1,1}-\hbar\ve(\ve+1)&\text{ if }i=0,
\end{cases}\label{rel6}\\
[x^\pm_{i,1},x^\pm_{i,0}]=0,\label{rel7}\\
[x^\pm_{i,0},x^\pm_{j,0}]=0\text{ if }a^0_{i,j}=0,\label{rel8}\\
[x^\pm_{i,0},[x^\pm_{i,0},x^\pm_{j,0}]]=0\text{ if }a^0_{i,j}=-1,\label{rel8.5}\\
[x^+_{1,1},[x^-_{1,0},(f_{1,1}-f_{2,2})t^u]]=[x^+_{1,0},[x^-_{1,1},(f_{1,1}-f_{2,2})t^u]],\label{rel9}
\end{gather}
where $f_{i,j}t^u$ is defined by the same way as $(E_{i,j}-E_{-j,-i})t^u\in\widehat{\mathfrak{so}}(2n)\subset\widehat{\mathfrak{sl}}(2n)$.
\end{Definition}
We set a degree on $ty_{\hbar,\ve}(\widehat{\mathfrak{so}}((2n))$ as
\begin{gather*}
\text{deg}(\hbar)=\text{deg}(\ve)=0,\text{deg}(h_{i,r})=0,\text{deg}(x^\pm_{i,r})=\pm\delta_{i,0}.
\end{gather*}
We take the standard degreewise completion (\cite{MNT}) of $ty_{\hbar,\ve}(\widehat{\mathfrak{so}}((2n))$ and denote it by $\widetilde{ty}_{\hbar,\ve}(\widehat{\mathfrak{so}}(2n))$.
We set $T^{s}_{i,j}$ for $s\in\mathbb{Z}$ as follows:
\begin{align*}
T^s_{i,j}&=\begin{cases}
\dfrac{1}{2}[h_{i,1},f_{i,j}t^s]-\hbar\dfrac{\ve}{2}sf_{i,j}t^s&\text{ for }j=i+1,\\
[h_{i,1},f_{i,j}t^s]-\hbar sf_{i,j}t^s&\text{ if }j>i+1,\\
\end{cases}\\
T^s_{j,i}&=\begin{cases}
\dfrac{1}{2}[h_{i,1},f_{j,i}t^s]+\hbar\dfrac{\ve}{2}s\alpha f_{i+1,i}t^s&\text{ for }j=i+1,\\
[h_{i,1},f_{j,i}t^s]+\hbar s(\alpha-1)f_{j,i}t^s&\text{ if }j>i+1,\\
\end{cases}
\end{align*}
for $j>i>0$ and
\begin{align*}
T^s_{i,-j}&=\begin{cases}
\dfrac{1}{2}[h_{i,1},f_{i,-j}t^s]+\dfrac{\hbar}{2}(s+1)(\ve+1)f_{i,-i-1}t^s\text{ for }j=i+1,\\
[h_{i,1},f_{i,-j}t^s]-\hbar sf_{i,-j}t^s\text{ if }j>i+1,
\end{cases}\\
T^s_{-i,j}&=\begin{cases}
\dfrac{1}{2}[h_{i,1},f_{-i,j}t^s]+\dfrac{\hbar}{2}(s+1)(\ve+1)f_{-i-1,i}t^s&\text{ if }j=i+1,\\
[h_{i,1},f_{i,j}t^s]-\hbar s(\ve-1)f_{-i,j}t^s&\text{ if }j>i+1,\\
\end{cases}
\end{align*}
for $0<i<j$. Moreover, we set
\begin{align*}
T^s_{i,j}&=T^s_{-i,-j}+(s+1)(\alpha+1)f_{i,j}t^{s},\text{ for }i,j<0\\
T^s_{i,-j}&=T^s_{j,-i}+(s+1)(\alpha+1)f_{i,-j}t^s,\ T^s_{-i,j}=T^s_{-j,i}+(s+1)(\alpha+1)f_{-i,j}t^s\text{ for }i>j>0,\\
T^s_{i,i}-T^s_{j,j}&=[T^s_{i,j},f_{j,i}].
\end{align*}
Let us set the following elements of $\widetilde{ty}_{\hbar,\ve}(\widehat{\mathfrak{so}}(2n))$:
\begin{align*}
A_{i,j}&=-2\hbar(T^0_{i,i}-T^0_{j,j})+\hbar\sum_{s\geq0}\limits(f_{i,j}t^{-s}T^s_{j,i}+T^{-s-1}_{j,i}f_{i,j}t^{s+1})\\
&\quad-\hbar\sum_{s\geq0}\limits(f_{j,i}t^{-s}T^s_{i,j}+T^{-s-1}_{i,j}f_{j,i}t^{s+1})\\
&\quad+\hbar\sum_{s\geq0}\limits(f_{i,-j}t^{-s}T^{s}_{-j,i}+T^{-s-1}_{-j,i}f_{i,-j}t^{s+1})\\
&\quad+\hbar\sum_{s\geq0}\limits(f_{-i,j}t^{-s}T^s_{j,-i}+T^{-s-1}_{j,-i}f_{-i,j}t^{s+1})\\
&\quad+\hbar^2\sum_{s\geq0}\limits(-(s+1)\ve f_{i,j}t^{-s-1}f_{j,i}t^{s+1}+s\ve f_{j,i}t^{-s}f_{i,j}t^s)\\
&\quad-\hbar^2\ve(\sum_{s\geq0}\limits(f_{i,-j}t^{-s-1}f_{-j,i}t^{s+1}+f_{-j,i}t^{-s}f_{i,-j}t^s)\\
&\quad+2\hbar^2(\ve+1)f_{j,j}-2(\ve+1)f_{i,i},\\
B_{i,j}&=2\hbar^2\ve f_{i,j}-2\hbar\ve T^0_{i,j}\\
&\quad+2\hbar\sum_{s\geq0}\limits (f_{i,j}t^{-s-1}(T^{s+1}_{i,i}-T^{s+1}_{j,j})+(T^{-s}_{i,i}-T^{-s}_{j,j})f_{i,j}t^{s})\\
&\quad+2\hbar\sum_{s\geq0}\limits ((f_{i,i}-f_{j,j})t^{-s-1}T^{s+1}_{i,j}+T^{-s}_{i,j}t^{1-s}(f_{i,i}-f_{j,j})t^s)\\
&\quad-2\hbar^2\sum_{s\geq0}\limits((s+2)f_{i,j}t^{-s-1}(f_{i,i}-f_{j,j})t^{s+1}+(1-s)(f_{i,i}-f_{j,j})t^{-s}W^{(1)}_{i,j}t^s)\\
&\quad-2\hbar^2\sum_{s\geq0}\limits((s+2)(f_{i,i}-f_{j,j})t^{-s-1}f_{i,j}t^{s+1}+(1-s)f_{i,j}t^{-s}(f_{i,i}-f_{j,j})t^s),\\
C_{i,j}&=(4-\ve^2)\hbar^2f_{i,j}+2(2-\ve)\hbar^2\sum_{s\geq0}\limits (f_{i,j}t^{-s-1}((f_{i,i}-f_{j,j})t^{s+1}+(f_{i,i}-f_{j,j})t^{-s}f_{i,j}t^s).
\end{align*}
We define the twisted affine Yangian of type $D$ by deforming these relations.
\begin{Definition}\label{Twist}
We define $TY(\widehat{\mathfrak{so}}((2n))$ is a quotient algebra of $ty(\widehat{\mathfrak{so}}((2n))$ divided by the following relations:
with the relations:
\begin{gather}
[h_{i,1},h_{j,1}]=\hbar^2(A_{i,j}-A_{i+1,j}-A_{i,j+1}+A_{i+1,j+1}),\label{rel10}\\
[[x^+_{n-1,1},x^-_{n-1,1}],x^\pm_{n,0}]=(B_{n-1,n}+C_{n-1,n}),\label{rel11}\\
[[x^+_{0,1},x^-_{0,1}],x^\pm_{1,0}]=-B_{1,2}.\label{rel12}
\end{gather}
\end{Definition}
\begin{Remark}
In \cite{HU}, Harako and the author suggest a new definition of the twisted affine Yangian. The definition in \cite{HU} is quite different from Definition~\ref{Twist} since there are no defining relations corresponding to \eqref{rel9}. We expect that we can reduce the relation \eqref{rel9} and these two definitions coincide with each other.
\end{Remark}
\section{Rectangular $W$-algebra $\mathcal{W}^k(\mathfrak{sp}(4n),(2^{2n}))$}
In this section, we recall the definition of the rectangular $W$-algebra $\mathcal{W}^k(\mathfrak{sp}(4n),(2^{2n}))$ and give some of its elements.
We take a homomorphism $p\colon I_n\to\{0,1\}$ by $p(i)=\begin{cases}
0&\text{ if }i>0,\\
1&\text{ if }i<0.
\end{cases}$
Let us set $F_{i,j}=e_{i,j}-(-1)^{p(i)+p(j)}e_{-j,-i}\in\mathfrak{gl}(4n)$. We note that $\mathfrak{sp}(4n)$ is spanned by $\{F_{i,j}\mid i,j\in I_{2n}\}$. We take a nilpotent element of $\mathfrak{sp}(4n)$ as $f=\sum_{i=1}^{2n}\limits F_{i,i-2n-1}$.
We also set 
\begin{align*}
\mathfrak{g}_0=\bigoplus_{i,j\in I_{2n},p(i)=p(j)}\mathbb{C}F_{i,j},\ 
\mathfrak{g}_2=\bigoplus_{i,j\in I_{2n},p(i)=0,p(j)=1}\mathbb{C}F_{i,j},\ 
\mathfrak{g}_{-2}=\bigoplus_{i,j\in I_{2n},p(i)=0,p(j)=1}\mathbb{C}F_{i,j}
\end{align*}
and take a $\mathfrak{sl}_2$-triple $(x, e, f)$ satisfying that
$\mathfrak{g}_p=\{y\in\mathfrak{so}(nl)\mid[x,y]=py\}$.

We set a Lie algebra $\mathfrak{b}=\mathfrak{g}_0\oplus\mathfrak{g}_{-2}$ and a Lie superalgebra $\mathfrak{a}=\mathfrak{b}\oplus\bigoplus_{i,j\in I_{2n},p(i)=0,p(j)=1}\mathbb{C}\psi_{i,j}$ with the following commutator relations:
\begin{gather*}
[F_{i,j},\psi_{a,b}]=\delta_{j,a}\psi_{i,b}-\delta_{b,i}\psi_{a,j}-(-1)^{p(i)+p(j)}\delta_{-i,a}\psi_{-j,b}+(-1)^{p(i)+p(j)}\delta_{-j,b}\psi_{a,-i},\\
\psi_{i,j}=\psi_{-j,-i},
\end{gather*}
where $F_{i,j}$ is an even element and $\psi_{i,j}$ is an odd element.
We take an inner product on $\mathfrak{a}$ as
\begin{align*}
\kappa(F_{i,j},F_{a,b})&=\delta_{i,b}\delta_{j,a}\alpha-(-1)^{p(i)+p(j)}\delta_{-j,b}\delta_{-i,a}\alpha,\\
\kappa(F_{i,j},\psi_{a,b})&=0=\kappa(\psi_{i,j},\psi_{a,b})
\end{align*}
where $\alpha=k+n+2$. We denote the universal affine vertex algebra associated with $\mathfrak{b}$ and $\mathfrak{a}$ by $V^\kappa(\mathfrak{a})$ and $V^\kappa(\mathfrak{b})$ respectively. We identify universal affine vertex algebras $V^\kappa(\mathfrak{a})$ and $V^\kappa(\mathfrak{b})$ with $U(\mathfrak{a}[t^{-1}]t^{-1})$ and $U(\mathfrak{b}[t^{-1}]t^{-1})$. We also denote $xt^{-s}\in U(\mathfrak{a}[t^{-1}]t^{-1})$ by $x[-s]$ and regard $V^\kappa(\mathfrak{a})$ as a non-associative algebra whose product is given by $(-1)$-product. 

Now, we can define a $W$-algebra $\mathcal{W}^k(\mathfrak{sp}(4n),f)$. We denote the translation operator and the vacuum vector of $\mathcal{W}^k(\mathfrak{sp}(4n),f)$ by $\partial$ and $|0\rangle$. We also set $\hat{a}=a+2n,\tilde{a}=a-2n$. If $\hat{a}$ or $\tilde{a}$ does not exist, we define $\psi_{\hat{a},b}[-1]=0$ and $\psi_{a,\tilde{b}}[-1]=0$ respectively.
\begin{Definition}
We define $\mathcal{W}^k(\mathfrak{sp}(4n),f)$ as
\begin{equation*}
\mathcal{W}^k(\mathfrak{sp}(4n),f)=\{y\in V^\kappa(\mathfrak{b})\mid d_0(y)=0\},
\end{equation*}
where $d_0 \colon V^{\kappa}(\mathfrak{b})\to V^{\kappa}(\mathfrak{a})$ is an odd differential determined by
\begin{align}
d_0(1)&=0,\quad[d_0,\partial]=0,\label{afo2}\\
d_0(F_{a,b}[-1])&=\psi_{\hat{a},b}[-1]-\psi_{a,\tilde{b}}[-1]\text{ if }p(a)=p(b),\\
d_0(F_{n+a,-n+b}[-1])&=\sum_{u\in I_n}\limits F_{u-n,b-n}[-1]\psi_{a+n,u-n}[-1]-\sum_{u\in I_n}\limits \psi_{u+n,b-n}[-1]F_{a+n,u+n}[-1]\nonumber\\
&\quad+(k+n+1)\psi_{n+a,-n+b}[-2]\text{ if }a,b\in I_n.
\end{align}
\end{Definition}
We give a set of generators of $\mathcal{W}^k(\mathfrak{sp}(4n),f)$.
\begin{Theorem}
The following elements are contained in $\mathcal{W}^k(\mathfrak{sp}(4n),f)$:
\begin{align*}
W^{(1)}_{i,j}&=F_{-n+i,-n+j}[-1]+F_{n+i,n+j}[-1],\\
W^{(2)}_{i,j}&=F_{n+i,-n+j}[-1]+(\alpha-1)F_{n+i,n+j}[-2]+\sum_{u\in I_n}\limits F_{-n+u,-n+j}[-1]F_{n+i,n+u}[-1]
\end{align*}
for $i,j\in I_n$.
\end{Theorem}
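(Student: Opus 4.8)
The plan is to use the definition $\mathcal{W}^k(\mathfrak{sp}(4n),f)=\{y\in V^\kappa(\mathfrak{b})\mid d_0(y)=0\}$ directly: it suffices to check that $d_0$ annihilates each proposed element $W^{(1)}_{i,j}$ and $W^{(2)}_{i,j}$. Since $d_0$ is an odd differential, I would treat it as a graded derivation, so that on a quadratic expression it obeys $d_0(a\cdot b)=d_0(a)\cdot b+(-1)^{|a|}a\cdot d_0(b)$, where each $F_{p,q}$ is even and the output $\psi$-terms are odd; as both factors in the quadratic term are even this is just the ordinary Leibniz rule. The three inputs to the computation are the explicit formula $d_0(F_{a,b}[-1])=\psi_{\hat a,b}[-1]-\psi_{a,\tilde b}[-1]$ (valid when $p(a)=p(b)$), the formula for $d_0(F_{n+a,-n+b}[-1])$, and the relation $[d_0,\partial]=0$, together with the convention that any $\psi$-term whose hatted or tilded index leaves $I_{2n}$, or which violates the grading requirement $p=0$ in the first slot and $p=1$ in the second, is set to zero.

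For $W^{(1)}_{i,j}=F_{-n+i,-n+j}[-1]+F_{n+i,n+j}[-1]$ the verification is short. Applying the first formula to each summand, the hat of $-n+i$ equals $n+i$ and the tilde of $n+j$ equals $-n+j$, so the two ``interior'' contributions are $+\psi_{n+i,-n+j}[-1]$ (from the hat in the first term) and $-\psi_{n+i,-n+j}[-1]$ (from the tilde in the second term), which cancel. The remaining ``exterior'' terms carry indices $\widetilde{-n+j}$ and $\widehat{n+i}$, which leave $I_{2n}$ or fail the $\psi$-grading condition; by the vanishing convention they are zero. Hence $d_0(W^{(1)}_{i,j})=0$.

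The element $W^{(2)}_{i,j}$ is where the real work lies. I would apply $d_0$ to its three pieces separately: to the mixed linear term $F_{n+i,-n+j}[-1]$ via the second defining formula, which produces a sum of $F\psi$ and $\psi F$ contributions together with the affine correction $(k+n+1)\psi_{n+i,-n+j}[-2]$; to the quadratic term $\sum_{u\in I_n}F_{-n+u,-n+j}[-1]F_{n+i,n+u}[-1]$ via the Leibniz rule, using the first formula on each factor; and to $(\alpha-1)F_{n+i,n+j}[-2]$ by using $[d_0,\partial]=0$ to reduce it to the $[-1]$ case. The strategy is then to re-index the sum over $u$, invoke the symmetry $\psi_{p,q}=\psi_{-q,-p}$, and collect terms: the derivative of the quadratic part should reproduce, up to sign, the $F\psi$ and $\psi F$ terms arising from the mixed linear term, so that these cancel in pairs.

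The main obstacle is the bookkeeping of the normally ordered ($(-1)$-product) corrections. When the derivation is applied to the quadratic term and the resulting $\psi$-valued and $F$-valued fields are rewritten in the standard ordered basis, the vertex-algebra structure generates additional lower terms supported on the $[-2]$ mode, whose coefficients involve the bilinear form $\kappa$ and hence the level through $\alpha=k+n+2$. The crux of the proof is to show that these reordering terms, together with the $(\alpha-1)$ coefficient in the $[-2]$ summand, exactly cancel the affine correction $(k+n+1)\psi_{n+i,-n+j}[-2]$ coming from the second defining formula; the required numerical identity is precisely $\alpha-1=k+n+1$. I would organize the verification by the $t$-degree of the output, checking cancellation of the $[-1]$ part and the $[-2]$ part separately, and use $d_0^2=0$ as an independent consistency check.
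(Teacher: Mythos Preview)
Your approach is correct and matches the paper's direct computation. One simplification: since $d_0$ is an exact derivation of the $(-1)$-product, no reordering corrections arise from the quadratic term---its bilinear output cancels term-for-term against that of $d_0(F_{n+i,-n+j}[-1])$, and the sole $[-2]$ cancellation is between $(k+n+1)\psi_{n+i,-n+j}[-2]$ and $-(\alpha-1)\psi_{n+i,-n+j}[-2]$, settled exactly by $\alpha-1=k+n+1$.
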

\begin{proof}
We need to show $d_0(W^{(r)}_{i,j})=0$. We only show the case $r=2$. The case that $r=1$ can be proven in a similar way.
By the definition of $d_0$, we have
\begin{align*}
d_0(W^{(2)}_{a,b})&=\sum_{u\in I_n}\limits F_{u-n,b-n}[-1]\psi_{a+n,u-n}[-1]-\sum_{u\in I_n}\limits \psi_{u+n,b-n}[-1]F_{a+n,u+n}[-1]\\
&\quad+(k+n+1)\psi_{n+a,-n+b}[-2]-(\alpha-1) F_{n+a,-n+b}[-2]\\
&\quad+\sum_{u\in I_n}\limits \psi_{n+u,-n+b}[-1]F_{n+a,n+u}[-1]-\sum_{u\in I_n}\limits F_{-n+u,-n+b}[-1]\psi_{n+a,-n+u}[-1]\\
&=0.
\end{align*}
\end{proof}
By the definition of $F_{i,j}$, we have
\begin{align}
W^{(2)}_{i,j}&=F_{n-j,-n-i}[-1]-(\alpha-1)F_{-n-j,-n-i}[-2]+\sum_{u\in I_n}\limits F_{n-j,n-u}[-1]F_{-n-u,-n-i}[-1]\nonumber\\
&=F_{n-j,-n-i}[-1]-(\alpha-1)F_{-n-j,-n-i}[-2]+\sum_{u\in I_n}\limits [F_{n-j,n-u},F_{-n-u,-n-i}][-2]\nonumber\\
&\quad+\sum_{u\in I_n}\limits F_{-n-u,-n-i}[-1]F_{n-j,n-u}[-1]\nonumber\\
&=F_{n-j,-n-i}[-1]-(\alpha+1)F_{-n-j,-n-i}[-2]+\sum_{u\in I_n}\limits F_{-n-u,-n-i}[-1]F_{n-j,n-u}[-1]\nonumber\\
&=W^{(2)}_{-j,-i}+(\alpha+1)\partial W^{(1)}_{i,j}.\label{rel99}
\end{align}
\section{OPEs for $\mathcal{W}^k(\mathfrak{sp}(4n),f)$}
In this section, we give some OPEs for $\mathcal{W}^k(\mathfrak{sp}(4n),f)$. The proof is given by a direct computation and we omit it.
\begin{Theorem}
The following relations hold:
\begin{align}
(W^{(1)}_{i,j})_{(0)}W^{(1)}_{p,q}&=\delta_{j,p}W^{(1)}_{i,q}-\delta_{i,q}W^{(1)}_{p,j}-\delta_{-i,p}W^{(1)}_{-j,q}+\delta_{-j,q}W^{(1)}_{p,-i},\label{OPE1-1}\\
(W^{(1)}_{i,j})_{(1)}W^{(1)}_{p,q}&=2\delta_{i,q}\delta_{j,p}\alpha|0\rangle,\label{OPE2-1}\\
(W^{(1)}_{i,j})_{(r)}W^{(1)}_{p,q}&=0\text{ for }r>2,\label{OPE2-3}\\
(W^{(1)}_{i,j})_{(0)}W^{(2)}_{p,q}&=\delta_{j,-q}W^{(2)}_{p,-i}-\delta_{i,-p}W^{(2)}_{-j,q}-\delta_{i,q}W^{(2)}_{p,j}+\delta_{p,j}W^{(2)}_{i,q},\label{OPE1}\\
(W^{(1)}_{i,j})_{(1)}W^{(2)}_{p,q}&=(\alpha-1)\delta_{i,-p}W^{(1)}_{-q,j}-(\alpha-1)\delta_{j,p}W^{(1)}_{-q,-i}+\delta_{i,q}W^{(1)}_{-j,-p}-\delta_{-j,q}(W^{(1)}_{i,-p}),\label{OPE2}\\
(W^{(1)}_{i,j})_{(2)}W^{(2)}_{p,q}&=-\alpha(\alpha+1)\delta_{q,-j}\delta_{i,-p}|0\rangle+\alpha(\alpha+1)\delta_{i,q}\delta_{j,p}|0\rangle,\label{OPE3}\\
(W^{(1)}_{i,j})_{(r)}W^{(2)}_{p,q}&=0\text{ for }r>3.\label{OPE3.5}
\end{align}
\end{Theorem}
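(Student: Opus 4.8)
The plan is to push every bracket down to the elementary operator products of the affine vertex algebra $V^\kappa(\mathfrak{b})$ and then to recognize the output among $W^{(1)}_{i,j}$, $W^{(2)}_{i,j}$ and the vacuum. Since $\mathcal{W}^k(\mathfrak{sp}(4n),f)\subset V^\kappa(\mathfrak{b})$ is a vertex subalgebra, all $(r)$-products may be computed in the ambient (purely even) algebra. The only inputs needed are the current products
\begin{align*}
(F_{i,j}[-1])_{(0)}F_{a,b}[-1]&=[F_{i,j},F_{a,b}][-1],\\
(F_{i,j}[-1])_{(1)}F_{a,b}[-1]&=\kappa(F_{i,j},F_{a,b})\,|0\rangle,\\
(F_{i,j}[-1])_{(r)}F_{a,b}[-1]&=0\quad(r\geq2),
\end{align*}
where $[F_{i,j},F_{a,b}]$ is the $\mathfrak{sp}(4n)$-commutator (of the same shape as the $[F,\psi]$-relation recorded earlier) and $\kappa$ is the fixed invariant form. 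I will also use $F_{i,j}[-2]=\partial\,(F_{i,j}[-1])$, the sesquilinearity $a_{(r)}(\partial b)=\partial(a_{(r)}b)+r\,a_{(r-1)}b$, and the antisymmetry $F_{i,j}=-(-1)^{p(i)+p(j)}F_{-j,-i}$.

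First I would settle the $W^{(1)}$-$W^{(1)}$ products. As $W^{(1)}_{i,j}=F_{-n+i,-n+j}[-1]+F_{n+i,n+j}[-1]$ is a single linear combination of currents, $(W^{(1)}_{i,j})_{(r)}W^{(1)}_{p,q}$ is just a sum of the three elementary products above. Expanding the four $[F,F]$-terms and collecting the $\pm n$ index shifts reproduces the $\mathfrak{sp}$-type structure constants of \eqref{OPE1-1}; a direct expansion of $\kappa$ over the two diagonal blocks yields the level coefficient $2\alpha$ in \eqref{OPE2-1}; and vanishing for $r\geq2$ is forced by the conformal (Hamiltonian) grading of the $W$-algebra, since both factors carry weight one, giving \eqref{OPE2-3}. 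In effect this exhibits the $W^{(1)}_{i,j}$ as the currents of an affine symplectic current algebra.

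The substance lies in the $W^{(1)}$-$W^{(2)}$ products. I would split
\[
W^{(2)}_{p,q}=F_{n+p,-n+q}[-1]+(\alpha-1)F_{n+p,n+q}[-2]+\sum_{u\in I_n}F_{-n+u,-n+q}[-1]\,F_{n+p,n+u}[-1],
\]
compute $(W^{(1)}_{i,j})_{(r)}$ against each summand, and add. The first two summands are linear in currents, so they reduce to the elementary products, the middle one contributing through the $\partial$-rule above (which is exactly what raises the relevant product index by one). For the quadratic summand I would invoke the iterate (Borcherds) identity
\[
a_{(r)}(b_{(-1)}c)=\sum_{j\geq0}\binom{r}{j}\,(a_{(j)}b)_{(r-1-j)}c+b_{(-1)}(a_{(r)}c)
\]
with $a=W^{(1)}_{i,j}$; since $a_{(j)}b=0$ for $j\geq2$ the sum truncates at $j=1$. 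The single-contraction part ($j=0$) feeds the $W^{(1)}$- and $W^{(2)}$-valued right-hand sides of \eqref{OPE1} and \eqref{OPE2}, the coefficients $\alpha-1$ being inherited from the middle summand and from folding the quadratic output back onto the generators by means of the identity \eqref{rel99}. The double-contraction part ($j=1$, which forces $r\geq2$) produces the purely central right-hand side of \eqref{OPE3}, whose scalar $\alpha(\alpha+1)$ emerges by combining the $\kappa$-level factors from the $(\cdot)_{(1)}$-contractions against the linear and the quadratic summands. Finally, products of index $\geq3$ vanish again by the conformal grading, as $W^{(1)}$ and $W^{(2)}$ have weights $1$ and $2$ and $a_{(r)}b$ has weight $\Delta_a+\Delta_b-r-1\geq0$, which is \eqref{OPE3.5}.

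The main obstacle will be bookkeeping rather than anything conceptual: tracking the shifts $\pm n$ and, above all, the parity signs $(-1)^{p(i)+p(j)}$ that accompany every application of the $\mathfrak{sp}$-bracket and of $\kappa$, because the identification $F_{i,j}=-(-1)^{p(i)+p(j)}F_{-j,-i}$ makes each matrix unit appear in two guises, so that a single dropped sign silently turns a $\delta_{i,q}$ into a $\delta_{i,-p}$. I would keep this under control by forcing every index into a fixed fundamental range via the antisymmetry, by using \eqref{rel99} systematically to fold the output of the quadratic term back onto $W^{(1)}$ and $W^{(2)}$, and by verifying each identity on the degenerate index configurations (all indices distinct, versus $i=q,\,j=p$, and so on) where only one $\delta$-monomial survives. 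As an independent check I would apply $d_0$ to both sides of each OPE: since $d_0$ is an odd derivation of all $(r)$-products and annihilates $W^{(1)}$ and $W^{(2)}$, both sides must lie in $\ker d_0$, which detects any stray term not expressible through the generators.
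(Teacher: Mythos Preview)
Your plan is correct and is precisely the direct computation the paper has in mind; the paper itself omits the proof entirely, stating only that it is given by a direct computation. Your outline---expanding over the elementary current OPEs in $V^\kappa(\mathfrak{b})$, handling the quadratic summand of $W^{(2)}$ via the non-commutative Wick formula, and invoking the conformal grading for the vanishing statements---is the standard route and matches what the paper intends.
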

We obtain the following computation by using Mathematica.
\begin{Theorem}\label{OPE129}
The following relations hold for $n=4$
\begin{align}
(W^{(2)}_{i,i})_{(0)}W^{(2)}_{j,j}&=\alpha\partial^2W^{(1)}_{j,j}-2\partial W^{(2)}_{j,j}-(W^{(1)}_{i,j})_{(-1)}W^{(2)}_{j,i}+(W^{(1)}_{j,i})_{(-1)}W^{(2)}_{i,j}\nonumber\\
&\quad-(W^{(1)}_{i,-j})_{(-1)}W^{(2)}_{-j,i}-(W^{(1)}_{-i,j})_{(-1)}W^{(2)}_{j,-i}\nonumber\\
&\quad+(\alpha-1)(W^{(1)}_{i,j})_{(-1)}\partial W^{(1)}_{j,i}-(W^{(1)}_{j,i})_{(-1)}\partial W^{(1)}_{i,j}\nonumber\\
&\quad+(\alpha-1)(W^{(1)}_{i,-j})_{(-1)}\partial W^{(1)}_{-j,i}+(\alpha-1)(W^{(1)}_{-i,j})_{(-1)}\partial W^{(1)}_{j,-i},\label{OPE4}\\
(W^{(2)}_{i,i})_{(1)}W^{(2)}_{j,j}&=2\partial W^{(1)}_{i,i}+\alpha\partial W^{(1)}_{j,j}-2W^{(2)}_{i,i}-2W^{(2)}_{j,j}\nonumber\\
&\quad+(\alpha-2)(W^{(1)}_{i,j})_{(-1)}W^{(1)}_{j,i}+(\alpha-2)(W^{(1)}_{i,-j})_{(-1)}W^{(1)}_{-j,i},\label{OPE5}\\
(W^{(2)}_{i,-j})_{(0)}W^{(2)}_{-j,i}&=\alpha\partial^2W^{(1)}_{j,j}-\alpha\partial W^{(2)}_{j,j}-(W^{(1)}_{j,j})_{(-1)}W^{(2)}_{i,i}-(W^{(1)}_{i,i})_{(-1)}W^{(2)}_{j,j}\nonumber\\
&\quad-(W^{(1)}_{i,j})_{(-1)}W^{(2)}_{j,i}-(W^{(1)}_{j,i})_{(-1)}W^{(2)}_{i,j}\nonumber\\
&\quad+(W^{(1)}_{j,j})_{(-1)}\partial W^{(1)}_{i,i}+(W^{(1)}_{i,i})_{(-1)}\partial W^{(1)}_{j,j}\nonumber\\
&\quad+(W^{(1)}_{i,j})_{(-1)}\partial W^{(1)}_{j,i}+(W^{(1)}_{j,i})_{(-1)}\partial W^{(1)}_{i,j},\label{OPE6}\\
(W^{(2)}_{i,-j})_{(1)}W^{(2)}_{-j,i}&=(\alpha-1)\partial W^{(1)}_{i,i}+(1+2\alpha-\alpha^2)\partial W^{(1)}_{j,j}-\alpha W^{(2)}_{i,i}-\alpha W^{(2)}_{j,j}\nonumber\\
&\quad+(2-\alpha)(W^{(1)}_{i,i})_{(-1)}W^{(1)}_{j,j}+(2-\alpha)(W^{(1)}_{i,j})_{(-1)}W^{(1)}_{j,i}.\label{OPE7}
\end{align}
\end{Theorem}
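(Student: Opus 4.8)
The plan is to carry out every product directly inside the ambient affine vertex algebra $V^\kappa(\mathfrak{b})=U(\mathfrak{b}[t^{-1}]t^{-1})$, in which $\mathcal{W}^k(\mathfrak{sp}(4n),f)$ sits as the vertex subalgebra $\Ker d_0$. Since all four fields $W^{(1)}_{i,j}$ and $W^{(2)}_{i,j}$ are written explicitly in terms of the currents $F_{a,b}[-s]$, no further information about $d_0$ is needed to compute the OPEs; moreover, because $d_0$ acts as a derivation of all the products, the kernel is closed under them, so each right-hand side automatically lies again in $\mathcal{W}^k(\mathfrak{sp}(4n),f)$. The only structural input is the generating OPE of the currents: the singular part of $F_{i,j}[-1]$ with $F_{a,b}[-1]$ is controlled by the bracket $[F_{i,j},F_{a,b}]$ at order zero and by the invariant form $\kappa(F_{i,j},F_{a,b})$ at order one, the latter carrying the parameter $\alpha=k+n+2$. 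So the first step is to record this OPE together with the symmetry $F_{i,j}=-(-1)^{p(i)+p(j)}F_{-j,-i}$.

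Second, I would reduce the higher products. Each $W^{(2)}_{i,j}$ is the sum of a level-one current $F_{n+i,-n+j}[-1]$, a level-two current $(\alpha-1)F_{n+i,n+j}[-2]$, and a normally ordered quadratic term $\sum_{u\in I_n}F_{-n+u,-n+j}[-1]F_{n+i,n+u}[-1]$. Hence each product $(W^{(2)}_{i,i})_{(r)}W^{(2)}_{j,j}$ and its variants is expanded by the non-commutative Wick formula: \emph{single} contractions between one constituent and one factor of a quadratic term produce the normally ordered expressions $(W^{(1)}_{\cdot,\cdot})_{(-1)}W^{(\cdot)}_{\cdot,\cdot}$ on the right, while \emph{double} contractions produce the remaining linear and scalar terms. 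I would organize the bookkeeping by the product index $r$ and by which of the three constituents of each $W^{(2)}$ meets which, collecting all contributions to a fixed $r$ before simplifying, and using the already-established OPEs $(W^{(1)})_{(r)}W^{(1)}$ and $(W^{(1)})_{(r)}W^{(2)}$ both as inputs and as consistency checks.

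Third comes the reassembly: after the contractions one collects terms and, running the definitions of $W^{(1)}_{i,j}$ and $W^{(2)}_{i,j}$ in reverse, recognizes them as the combinations $W^{(1)}$, $W^{(2)}$, $\partial W^{(1)}$, $\partial^2 W^{(1)}$ and the quadratic fields displayed in \eqref{OPE4}--\eqref{OPE7}; the identity \eqref{rel99} relating $W^{(2)}_{i,j}$ and $W^{(2)}_{-j,-i}$ is needed to put everything into a canonical form. The polynomial coefficients in $\alpha$, such as $\alpha-1$, $\alpha-2$ and $1+2\alpha-\alpha^2$, arise exactly from the double contractions, each of which contributes a factor of $\kappa$ and hence of $\alpha$, compounded with the structure constants of $\mathfrak{sp}(4n)$.

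The hard part will be the combinatorics of the double sums over $I_n$ together with the sign factors $(-1)^{p(i)+p(j)}$, which make manual sign-tracking across the quartic expansion of a product of two quadratic fields highly error-prone. For this reason I would, as the author does, specialize to $n=4$, where every sum runs over the finite set $I_4$ and the whole computation becomes a finite, if lengthy, verification that is most safely carried out in Mathematica; the role of the hand analysis is then to set up the Wick expansion correctly and to interpret the machine output in terms of the generators.
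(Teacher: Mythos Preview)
Your proposal is correct and matches the paper's approach: the paper simply states that the identities in Theorem~\ref{OPE129} are obtained ``by using Mathematica,'' i.e.\ by direct expansion of the OPEs of the currents $F_{a,b}$ inside $V^\kappa(\mathfrak{b})$ followed by machine verification for $n=4$, which is exactly the Wick-expansion-plus-computer-algebra plan you describe. Your write-up is in fact more explicit than the paper about how the computation is set up and why the quadratic and $\alpha$-dependent terms arise.
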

By Theorem~\ref{OPE129}, we obtain the following corollary.
\begin{Corollary}
The following relations hold:
\begin{align*}
&\quad(W^{(1)}_{i,j})_{(0)}((W^{(2)}_{i,-j})_{(0)}W^{(2)}_{-j,i})\\
&=\alpha\partial^2W^{(1)}_{i,j}-\alpha\partial W^{(2)}_{i,j}-2(W^{(1)}_{i,j})_{(-1)}(W^{(2)}_{i,i}-W^{(2)}_{j,j})-2(W^{(1)}_{i,i}-W^{(1)}_{j,j})_{(-1)}W^{(2)}_{i,j}\\
&\quad+2(W^{(1)}_{i,j})_{(-1)}\partial (W^{(1)}_{i,i}-W^{(1)}_{j,j})+2(W^{(1)}_{i,i}-W^{(1)}_{j,j})_{(-1)}\partial W^{(1)}_{i,j},\\
&\quad(W^{(1)}_{i,j})_{(0)}((W^{(2)}_{i,-j})_{(1)}W^{(2)}_{-j,i})\\
&=(4-\alpha^2)\partial W^{(1)}_{i,j}+2(2-\alpha)(W^{(1)}_{i,j})_{(-1)}(W^{(1)}_{i,i}-W^{(1)}_{j,j}).
\end{align*}
\end{Corollary}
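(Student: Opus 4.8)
The plan is to derive both identities by applying the operator $(W^{(1)}_{i,j})_{(0)}$ to the relations \eqref{OPE6} and \eqref{OPE7} of Theorem~\ref{OPE129}, working under the genericity assumption $i\neq\pm j$ so that the Kronecker deltas in the first-order OPEs collapse. The structural input is the standard vertex-algebra commutator $[a_{(0)},b_{(m)}]=(a_{(0)}b)_{(m)}$, which says that $a_{(0)}$ is a derivation of every $m$-th product, together with $[\partial,a_{(0)}]=0$. Taking $a=W^{(1)}_{i,j}$ and using that by \eqref{OPE6} the expression $(W^{(2)}_{i,-j})_{(0)}W^{(2)}_{-j,i}$ equals a fixed sum of $\partial$-derivatives of generators and normally ordered ($(-1)$-)products of generators, the whole computation reduces to evaluating $(W^{(1)}_{i,j})_{(0)}$ on the individual factors $W^{(1)}_{p,q}$ and $W^{(2)}_{p,q}$, for which we already possess the closed formulas \eqref{OPE1-1} and \eqref{OPE1}.

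First I would apply $(W^{(1)}_{i,j})_{(0)}$ termwise to the right-hand side of \eqref{OPE6}: I pull it through each $\partial$ and distribute it across each $(-1)$-product by the derivation rule, then substitute \eqref{OPE1-1} for the $W^{(1)}$-factors and \eqref{OPE1} for the $W^{(2)}$-factors. For $i\neq\pm j$ these give, for example, $(W^{(1)}_{i,j})_{(0)}W^{(1)}_{j,j}=W^{(1)}_{i,j}$ and $(W^{(1)}_{i,j})_{(0)}W^{(2)}_{j,j}=W^{(2)}_{i,j}$, which immediately produce the leading terms $\alpha\partial^2W^{(1)}_{i,j}$ and $-\alpha\partial W^{(2)}_{i,j}$. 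The second identity is obtained identically from \eqref{OPE7}; here the off-diagonal $W^{(2)}$-contributions cancel in pairs and one is left only with multiples of $\partial W^{(1)}_{i,j}$ and of $(W^{(1)}_{i,j})_{(-1)}(W^{(1)}_{i,i}-W^{(1)}_{j,j})$.

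The main obstacle is the combinatorial reorganization. Each $(-1)$-product term on the right of \eqref{OPE6} splits in two under the derivation, and each resulting factor expands through \eqref{OPE1-1} or \eqref{OPE1} into four delta-weighted pieces, so one must track which deltas survive and verify the cancellations and recombinations that collapse the answer into the compact differences $W^{(1)}_{i,i}-W^{(1)}_{j,j}$ and $W^{(2)}_{i,i}-W^{(2)}_{j,j}$. A point easy to overlook is that the naive substitution produces terms such as $(W^{(1)}_{i,i})_{(-1)}W^{(1)}_{i,j}$, which are not yet in the desired normal form; here one must invoke the skew-symmetry of the normally ordered product, $(W^{(1)}_{i,i})_{(-1)}W^{(1)}_{i,j}=(W^{(1)}_{i,j})_{(-1)}W^{(1)}_{i,i}+\partial W^{(1)}_{i,j}$, where the correction is computed from \eqref{OPE1-1} and \eqref{OPE2-1}, and it is precisely this derivative correction that upgrades the naive coefficient of $\partial W^{(1)}_{i,j}$ in the second identity from $2+\alpha-\alpha^2$ to the stated $4-\alpha^2$. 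Throughout, the commutation of $(W^{(1)}_{i,j})_{(0)}$ with $\partial$ preserves the conformal-weight grading, which provides a useful consistency check on the bookkeeping.
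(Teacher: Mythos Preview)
Your proposal is correct and follows essentially the same route as the paper: both apply $(W^{(1)}_{i,j})_{(0)}$ term by term to the right-hand sides of \eqref{OPE6} and \eqref{OPE7}, using that $(W^{(1)}_{i,j})_{(0)}$ is a derivation of all $m$-th products and commutes with $\partial$, and then substitute \eqref{OPE1-1} and \eqref{OPE1}. Your observation about the skew-symmetry correction $(W^{(1)}_{i,i})_{(-1)}W^{(1)}_{i,j}=(W^{(1)}_{i,j})_{(-1)}W^{(1)}_{i,i}+\partial W^{(1)}_{i,j}$ fixing the coefficient from $2+\alpha-\alpha^2$ to $4-\alpha^2$ is exactly the step the paper uses (implicitly) for the second identity.
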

\begin{proof}
We only show the first relation. The second relation can be proven by a similar way. By \eqref{OPE1-1} and \eqref{OPE1}, we have
\begin{align*}
&\quad(W^{(1)}_{i,j})_{(0)}((W^{(2)}_{i,-j})_{(0)}W^{(2)}_{-j,i})\\
&=\alpha\partial^2W^{(1)}_{i,j}-\alpha\partial W^{(2)}_{i,j}-(W^{(1)}_{i,j})_{(-1)}W^{(2)}_{i,i}+(W^{(1)}_{j,j})_{(-1)}W^{(2)}_{i,j}+(W^{(1)}_{i,j})_{(-1)}W^{(2)}_{j,j}-(W^{(1)}_{i,i})_{(-1)}W^{(2)}_{i,j}\\
&\quad-(W^{(1)}_{i,j})_{(-1)}(W^{(2)}_{i,i}-W^{(2)}_{j,j})-(W^{(1)}_{i,i}-W^{(1)}_{j,j})_{(-1)}W^{(2)}_{i,j}+(W^{(1)}_{i,j})_{(-1)}\partial W^{(1)}_{i,i}-(W^{(1)}_{j,j})_{(-1)}\partial W^{(1)}_{i,j}\\
&\quad-(W^{(1)}_{i,j})_{(-1)}\partial W^{(1)}_{j,j}+(W^{(1)}_{i,i})_{(-1)}\partial W^{(1)}_{i,j}\\
&\quad+(W^{(1)}_{i,j})_{(-1)}\partial (W^{(1)}_{i,i}-W^{(1)}_{j,j})+(W^{(1)}_{i,i}-W^{(1)}_{j,j})_{(-1)}\partial W^{(1)}_{i,j}\\
&=\alpha\partial^2W^{(1)}_{i,j}-\alpha\partial W^{(2)}_{i,j}-2(W^{(1)}_{i,j})_{(-1)}(W^{(2)}_{i,i}-W^{(2)}_{j,j})-2(W^{(1)}_{i,i}-W^{(1)}_{j,j})_{(-1)}W^{(2)}_{i,j}\\
&\quad+2(W^{(1)}_{i,j})_{(-1)}\partial (W^{(1)}_{i,i}-W^{(1)}_{j,j})+2(W^{(1)}_{i,i}-W^{(1)}_{j,j})_{(-1)}\partial W^{(1)}_{i,j}.
\end{align*}
\begin{comment}
and
\begin{align*}
&\quad(W^{(1)}_{i,j})_{(0)}((W^{(2)}_{i,-j})_{(1)}W^{(2)}_{-j,i})\\
&=-(\alpha-1)\partial W^{(1)}_{i,j}+(1+2k-k^2)\partial W^{(1)}_{i,j}+\alpha W^{(2)}_{i,j}-\alpha W^{(2)}_{i,j}\\
&\quad-(2-\alpha)(W^{(1)}_{i,j})_{(-1)}W^{(1)}_{j,j}+(2-\alpha)(W^{(1)}_{i,i})_{(-1)}W^{(1)}_{i,j}+(2-\alpha)(W^{(1)}_{i,i}-W^{(1)}_{j,j})_{(-1)}W^{(1)}_{j,i}\\
&=-(\alpha-1)\partial W^{(1)}_{i,j}+(1+2k-k^2)\partial W^{(1)}_{i,j}\\
&\quad-(2-\alpha)(W^{(1)}_{i,j})_{(-1)}W^{(1)}_{j,j}+(2-\alpha)(W^{(1)}_{i,i})_{(-1)}W^{(1)}_{i,j}+(2-\alpha)W^{(1)}_{i,j})_{(-1)}(W^{(1)}_{i,i}-W^{(1)}_{j,j})\\
&=-(\alpha-1)\partial W^{(1)}_{i,j}+(1+2k-k^2)\partial W^{(1)}_{i,j}+2(2-\alpha)W^{(1)}_{i,j})_{(-1)}(W^{(1)}_{i,i}-W^{(1)}_{j,j})+(2-\alpha)\partial W^{(1)}_{i,j}\\
&=(4-\alpha^2)\partial W^{(1)}_{i,j}+2(2-\alpha)(W^{(1)}_{i,j})_{(-1)}(W^{(1)}_{i,i}-W^{(1)}_{j,j})
\end{align*}
\end{comment}
\end{proof}

\section{The twisted affine Yangian $TY_{\hbar,\ve}(\widehat{\mathfrak{so}}(8))$ and the rectangular $W$-algebra $\mathcal{W}^k(\mathfrak{so}(16),2^8)$}
Let us recall the definition of the universal enveloping algebra of a vertex algebra.For any vertex algebra $V$, let $L(V)$ be the Borcherds Lie algebra, that is,
\begin{align}
 L(V)=V{\otimes}\mathbb{C}[t,t^{-1}]/\text{Im}(\partial\otimes\id +\id\otimes\frac{d}{d t})\label{844},
\end{align}
where the commutation relation is given by
\begin{align*}
 [ut^a,vt^b]=\sum_{r\geq 0}\begin{pmatrix} a\\r\end{pmatrix}(u_{(r)}v)t^{a+b-r}
\end{align*}
for all $u,v\in V$ and $a,b\in \mathbb{Z}$. 
\begin{Definition}[Section~6 in \cite{MNT}]\label{Defi}
We set $\mathcal{U}(V)$ as the quotient algebra of the standard degreewise completion of the universal enveloping algebra of $L(V)$ by the completion of the two-sided ideal generated by
\begin{gather}
(u_{(a)}v)t^b-\sum_{i\geq 0}
\begin{pmatrix}
 a\\i
\end{pmatrix}
(-1)^i(ut^{a-i}vt^{b+i}-(-1)^avt^{a+b-i}ut^{i}),\label{241}\\
|0\rangle t^{-1}-1.
\end{gather}
We call $\mathcal{U}(V)$ the universal enveloping algebra of $V$.
\end{Definition}
\begin{Theorem}
Assume that $\ve=k+6$.
Then, we have a homomorphism $\Phi\colon TY_{\hbar,\ve}(\widehat{\mathfrak{so}}(8))\to\mathcal{U}(\mathcal{W}^k(\mathfrak{so}(16),2^8))$ determined by
\begin{gather*}
\Phi(H_{i,0})=\begin{cases}
W^{(1)}_{i,i}-W^{(1)}_{i+1,i+1}&\text{ if }1\leq i\leq n-1,\\
W^{(1)}_{n-1,n-1}+W^{(1)}_{n,n}&\text{ if }i=n,\\
W^{(1)}_{1,1}+W^{(1)}_{2,2}+2\alpha&\text{ if }i=0,
\end{cases}\\
\Phi(X^+_{i,0})=\begin{cases}
W^{(1)}_{i,i+1}&\text{ if }1\leq i\leq n-1,\\
W^{(1)}_{n-1,-n}&\text{ if }i=n,\\
W^{(1)}_{-2,1}t&\text{ if }i=0,
\end{cases}\ 
\Phi(X^-_{i,0})=\begin{cases}
W^{(1)}_{i+1,i}&\text{ if }1\leq i\leq n-1,\\
W^{(1)}_{-n,n-1}&\text{ if }i=n,\\
W^{(1)}_{1,-2}t^{-1}&\text{ if }i=0,
\end{cases}
\end{gather*}
and
\begin{gather*}
\Phi(H_{i,1})=-\hbar(W^{(2)}_{i,i}t-W^{(2)}_{i+1,i+1}t),\\
\Phi(X^+_{i,1})=\begin{cases}
-\hbar W^{(2)}_{i,i+1}t&\text{ if }1\leq i\leq n-1,\\
-\hbar W^{(2)}_{n-1,-n}t&\text{ if }i=n,\\
\hbar W^{(2)}_{-2,1}t^2&\text{ if }i=0,
\end{cases}\ 
\Phi(X^-_{i,1})=\begin{cases}
-\hbar W^{(t)}_{i+1,i}t&\text{ if }1\leq i\leq n-1,\\
-\hbar W^{(t)}_{-n,n-1}t&\text{ if }i=n,\\
\hbar W^{(2)}_{1,-2}&\text{ if }i=0.
\end{cases}
\end{gather*}
\end{Theorem}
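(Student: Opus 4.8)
The proof is a matter of verifying that every defining relation of $TY_{\hbar,\ve}(\widehat{\mathfrak{so}}(8))$ is sent by $\Phi$ to a valid identity in $\mathcal{U}(\mathcal{W}^k(\mathfrak{sp}(16),(2^8)))$, so the plan is to translate each relation through the normal-ordering law \eqref{241} of $\mathcal{U}(V)$ and the OPEs of Sections~8--9. The first step is to fix the dictionary between the two algebras. Setting $\alpha=k+n+2=k+6$, the hypothesis $\ve=k+6$ yields the crucial identification $\ve=\alpha$; this is what makes the deformation parameters on the two sides agree. Next I record that, by the $L(V)$-commutator $[ut^a,vt^b]=\sum_{r\geq0}\binom{a}{r}(u_{(r)}v)t^{a+b-r}$ together with \eqref{OPE1-1}, \eqref{OPE2-1} and \eqref{OPE2-3}, the modes $\{W^{(1)}_{i,j}t^s\}$ span a subalgebra of $L(\mathcal{W})$ isomorphic to the loop algebra generated by the $f_{i,j}t^s$; accordingly $\Phi$ extends by $f_{i,j}t^s\mapsto W^{(1)}_{i,j}t^s$, and \eqref{rel99} realises the $\tau$-symmetry $f_{i,j}=f_{-j,-i}$ on the $W^{(2)}$-modes. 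With this dictionary the purely level-$0$ relations \eqref{rel1} (its first half), \eqref{rel2}, \eqref{rel5}, \eqref{rel7}, \eqref{rel8} and \eqref{rel8.5} reduce to identities inside this loop algebra and follow immediately from \eqref{OPE1-1}.

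For the mixed relations I would compute $\Phi(h_{i,1})=-\hbar(W^{(2)}_{i,i}t-W^{(2)}_{i+1,i+1}t)$ acting on the level-$0$ currents. Bracketing a $W^{(2)}$-mode against a $W^{(1)}$-mode and reading off \eqref{OPE1}, \eqref{OPE2}, \eqref{OPE3} and \eqref{OPE3.5} produces a $W^{(2)}$-mode plus explicit $\alpha$-dependent lower terms; substituting $\ve=\alpha$, these reproduce the correction terms in \eqref{rel3}, \eqref{rel3.5} and \eqref{rel6} (for instance the coefficients $\hbar(\ve+1)$ and $2\hbar\ve$ there come from the factors $\alpha\pm1$ in \eqref{OPE2}--\eqref{OPE3}). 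The same computation identifies $\Phi(T^s_{i,j})$ with a $W^{(2)}$-mode of shifted degree up to correction terms; this ``half-current $\mapsto W^{(2)}$'' identification is the bridge needed for the remaining relations. The special relation \eqref{rel9} is then checked by expanding both triple brackets with \eqref{OPE1}--\eqref{OPE3}, the two sides agreeing because they differ only by the loop-algebra relation already established.

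The core of the argument, and the main obstacle, is the trio of deformed quadratic relations \eqref{rel10}, \eqref{rel11} and \eqref{rel12}. Here I would first compute the left-hand sides: for \eqref{rel10}, $[\Phi(h_{i,1}),\Phi(h_{j,1})]=\hbar^2[(W^{(2)}_{i,i}t-W^{(2)}_{i+1,i+1}t),(W^{(2)}_{j,j}t-W^{(2)}_{j+1,j+1}t)]$, which by the $L(V)$-commutator needs exactly the $(0)$- and $(1)$-products \eqref{OPE4} and \eqref{OPE5} of Theorem~\ref{OPE129}; for \eqref{rel11} and \eqref{rel12} the inner bracket $[\Phi(x^+_{\cdot,1}),\Phi(x^-_{\cdot,1})]=\hbar^2[W^{(2)}_{i,-j}t,W^{(2)}_{-j,i}t]$ uses \eqref{OPE6} and \eqref{OPE7}, while the outer bracket against $\Phi(x^\pm_{\cdot,0})=W^{(1)}_{\cdot}$ is supplied by the Corollary at the end of Section~8. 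Separately I would apply $\Phi$ to the right-hand sides $A_{i,j}$, $B_{i,j}$, $C_{i,j}$ via the $W^{(2)}$-dictionary for the $T$'s, turning them into explicit normally-ordered quadratics in the $W^{(1)}$- and $W^{(2)}$-modes.

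Matching the two presentations is where the real work lies. One must reorder every monomial into a fixed normal form using \eqref{241}, collect the resulting $\partial$- and central contributions, and verify that the $\alpha$-polynomial coefficients coincide with the $\ve$-polynomial coefficients under $\ve=\alpha$. This bookkeeping is precisely the step that required the Mathematica-assisted OPEs \eqref{OPE4}--\eqref{OPE7}, and it is where a sign error or a mode-number slip is easiest to make; once it is carried out for \eqref{rel10}--\eqref{rel12}, all defining relations are verified and $\Phi$ extends to the claimed algebra homomorphism.
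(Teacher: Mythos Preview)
Your proposal is correct and follows essentially the same route as the paper's proof: verify each defining relation of $TY_{\hbar,\ve}(\widehat{\mathfrak{so}}(8))$ via the OPE formulas \eqref{OPE1-1}--\eqref{OPE7} and the normal-ordering identity \eqref{241}, using the identification $\Phi(T^s_{i,j})=-\hbar W^{(2)}_{i,j}t^{s+1}$ as the bridge to the quadratic relations \eqref{rel10}--\eqref{rel12} and the Corollary of Section~8 for \eqref{rel11}--\eqref{rel12}. One small slip: relation \eqref{rel7} involves $x^\pm_{i,1}$ and so is not purely level-$0$; like the second half of \eqref{rel1}, it requires \eqref{OPE1}--\eqref{OPE3} rather than just \eqref{OPE1-1}, exactly as the paper groups them.
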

\begin{proof}
It is enough to show the compatibility of $\Phi$ with \eqref{rel1}-\eqref{rel12}.
From \eqref{OPE1-1} and \eqref{OPE2-1}, the compatibility with \eqref{rel2}, \eqref{rel5}, \eqref{rel8} and \eqref{rel8.5} follows. The compatibility with \eqref{rel1} and \eqref{rel7} also follows from \eqref{OPE1}-\eqref{OPE3}. We will show other compatibilities.

First, we show the compatibility with \eqref{rel3} and \eqref{rel3.5}. The case that $j\neq0$ follows from \eqref{OPE1}. We will only show the case that $j=0$ and $i=0,1$. The other cases can be proved by a similar way. By \eqref{OPE1}, \eqref{OPE2} and \eqref{rel99}, we obtain
\begin{align*}
[W^{(1)}_{-2,1}t,W^{(2)}_{1,1}t]&=W^{(2)}_{-2,1}t^2+(\alpha-1)W^{(1)}_{-2,1}t,\\
[W^{(1)}_{-2,1}t,W^{(2)}_{2,2}t]&=-W^{(2)}_{-1,2}t^2+(\alpha-1)W^{(1)}_{-2,1}t\\
&=-W^{(2)}_{-2,1}t^2-2(\alpha+1)W^{(1)}_{-2,1}t+(\alpha-1)W^{(1)}_{-2,1}t=-W^{(2)}_{-2,1}t^2-(\alpha+3)W^{(1)}_{-2,1}t.
\end{align*}
Thus, we have 
\begin{align*}
[\Phi(h_{1,1}),\Phi(X^+_{0,0})]&=2\hbar W^{(2)}_{-2,1}t^2+2(\alpha+1)\hbar W^{(1)}_{-2,1}t,\\
[\Phi(h_{2,1}),\Phi(X^+_{0,0})]&=-W^{(2)}_{-2,1}t^2-(\alpha+3)W^{(1)}_{-2,1}t.
\end{align*}
By \eqref{OPE1}, \eqref{OPE2} and \eqref{rel99}, we obtain
\begin{align*}
[W^{(1)}_{1,-2}t^{-1},W^{(2)}_{1,1}t]&=-W^{(2)}_{1,-2}-W^{(1)}_{2,-1}t^{-1}\\
[W^{(1)}_{1,-2}t,W^{(2)}_{2,2}t]&=W^{(2)}_{2,-1}+W^{(1)}_{1,-2}t^{-1}=W^{(2)}_{1,-2}+W^{(1)}_{1,-2}t^{-1}.
\end{align*}
Thus, we have
\begin{align*}
[\Phi(H_{1,1}),\Phi(X^-_{0,0})]&=-2\hbar W^{(2)}_{1,-2},\\
[\Phi(H_{2,1}),\Phi(X^+_{0,0})]&=\hbar W^{(2)}_{1,-2}+\hbar W^{(1)}_{1,-2}t^{-1}.
\end{align*}
We have proved the compatibility with \eqref{rel3} and \eqref{rel3.5}.

Next, we will show the compatibility with \eqref{rel6}. The case that $i=0,n$ follows from \eqref{OPE1}. By \eqref{OPE1}-\eqref{OPE3}, we obtain
\begin{align*}
&\quad[\Phi(x^+_{n,1}),\Phi(x^+_{n,0})]=-\hbar[W^{(2)}_{n-1,-n}t,W^{(1)}_{-n,n-1}]\\
&=-\hbar(W^{(2)}_{n-1,n-1}t-W^{(2)}_{-n,-n}t)=-\hbar(W^{(2)}_{n-1,n-1}t-W^{(2)}_{n,n}t)+\hbar(\alpha+1)W^{(1)}_{n,n}
\end{align*}
and
\begin{align*}
&\quad[\Phi(x^+_{n,1}),\Phi(x^+_{n,0})]=\hbar[W^{(2)}_{-2,1}t^2,W^{(1)}_{1,-2}t^{-1}]\\
&=\hbar(W^{(2)}_{-2,-2}t-W^{(1)}_{1,1}t)+\hbar(\alpha-1)W^{(1)}_{-2,-2}-\hbar W^{(1)}_{1,1}-\hbar\alpha(\alpha+1)\\
&=W^{(2)}_{2,2}t+\hbar(\alpha+1)W^{(1)}_{2,2}-W^{(2)}_{1,1}t+\hbar(\alpha-1)W^{(1)}_{-2,-2}-\hbar W^{(1)}_{1,1}-\hbar\alpha(\alpha+1).
\end{align*}
We have proved the compatibility with \eqref{rel6}.

Next, we will show the Compatibility with \eqref{rel9}. By \eqref{OPE1-1} and \eqref{OPE1}-\eqref{OPE3.5}, we have
\begin{align*}
&\quad[W^{(2)}_{1,2}t,[W^{(1)}_{2,1},(W^{(1)}_{1,1}-W^{(1)}_{2,2})t^u]]\\
&=[W^{(2)}_{1,2}t,2W^{(1)}_{2,1}t^u]\\
&=2(W^{(2)}_{1,1}-W^{(2)}_{2,2})t^{u+1}-2(\alpha-1)u(W^{(1)}_{2,2}-W^{(1)}_{1,1})t^u
\end{align*}
and
\begin{align*}
&\quad[W^{(1)}_{1,2},[W^{(2)}_{2,1}t,(W^{(1)}_{1,1}-W^{(1)}_{2,2})t^u]]\\
&=[W^{(1)}_{1,2},2W^{(2)}_{2,1}t+2u(\alpha-1)W^{(1)}_{2,1}t^u]\\
&=2(W^{(2)}_{1,1}-W^{(2)}_{2,2})t^{u+1}+2u(\alpha-1)(W^{(1)}_{1,1}-W^{(1)}_{2,2})t^u.
\end{align*}
Next, we will show the compatibility with \eqref{rel10}. We will write down of the image of $f_{i,j}t^s$ and $T^s_{i,j}$.
By \eqref{OPE1-1}-\eqref{OPE2-3}, we find the relation $\Phi(f_{i,j}t^s)=W^{(1)}_{i,j}t^s$.
We will compute $\Phi(T^s_{i,j})$. By \eqref{OPE1}-\eqref{OPE3}, for $0<i<j$, we find that
\begin{align}
&\quad[\Phi(h_{i,1}),W^{(1)}_{i,j}t^s]=\hbar[W^{(1)}_{i,j}t^s,W^{(2)}_{i,i}t-W^{(2)}_{i+1,i+1}t]\nonumber\\
&=-\hbar(1+\delta_{j,i+1})W^{(2)}_{i,j}t^{s+1}+s\hbar(1+\delta_{j,i+1})W^{(1)}_{-j,-i}t^s,\label{rel551}\\
&\quad[\Phi(h_{i,1}),W^{(1)}_{i,-j}t^s]=\hbar[W^{(1)}_{i,-j}t^s,W^{(2)}_{i,i}t-W^{(2)}_{i+1,i+1}t]\nonumber\\
&=-\hbar W^{(2)}_{i,-j}t^{s+1}+s\hbar W^{(1)}_{j,-i}t^s-\delta_{i+1,j}\hbar W^{(2)}_{i+1,-i}t^{s+1}+s\delta_{j,i+1}\hbar W^{(1)}_{i,-i-1}t^s,\label{rel552}\\
&\quad[\Phi(h_{i,1}),W^{(1)}_{-i,j}t^s]=\hbar[W^{(1)}_{-i,j}t^s,W^{(2)}_{i,i}t-W^{(2)}_{i+1,i+1}t]\nonumber\\
&=-\hbar W^{(2)}_{-j,i}t^{s+1}+s\hbar(\alpha-1)W^{(1)}_{-i,j}-\delta_{j,i+1}\hbar W^{(2)}_{-i,i+1}t^{s+1}+s\hbar(\alpha-1)\delta_{i+1,j}W^{(1)}_{-i-1,i}t^s,\label{rel553}\\
&\quad[\Phi(h_{i,1}),W^{(1)}_{j,i}t^s]=\hbar[W^{(1)}_{-i,-j}t^s,W^{(2)}_{i,i}t-W^{(2)}_{i+1,i+1}t]\nonumber\\
&=(1+\delta_{j,i+1})W^{(2)}_{j,i}t^{s+1}-s(\alpha-1)W^{(1)}_{j,i}t^s-s\delta_{i+1,j}W^{(1)}_{i+1,i}.\label{rel554}
\end{align}
\begin{comment}
where
\begin{align*}
&\quad[W^{(1)}_{-i,-j}t^s,W^{(2)}_{i,i}t-W^{(2)}_{i+1,i+1}t]=-[W^{(1)}_{j,i}t^s,W^{(2)}_{i,i}t-W^{(2)}_{i+1,i+1}t]\\
&=-W^{(2)}_{j,i}t^{s+1}-\delta_{j,i+1}W^{(2)}_{j,i}t^{s+1}\\
&\quad-s(\alpha-1)W^{(1)}_{-i,-j}t^s-s\delta_{i+1,j}W^{(1)}_{-i,-i-1}.
\end{align*}
\end{comment}
By \eqref{rel99}, we find 
\begin{align*}
&\quad[\Phi(h_{i,1}),W^{(1)}_{i,-j}t^s]\nonumber\\
&=-\hbar(1+\delta_{i+1,j})W^{(2)}_{i,-j}t^{s+1}+s\hbar W^{(1)}_{j,-i}t^s-(s+1)\hbar(\alpha+1)\delta_{i+1,j}W^{(1)}_{i,-i-1}t^{s+1}+s\delta_{j,i+1}\hbar W^{(1)}_{i,-i-1}t^s,\\
&\quad[\Phi(h_{i,1}),W^{(1)}_{-i,j}t^s]=\hbar[W^{(1)}_{-i,j}t^s,W^{(2)}_{i,i}t-W^{(2)}_{i+1,i+1}t]\nonumber\\
&=-\hbar(1+\delta_{i+1,j})W^{(2)}_{-j,i}t^{s+1}+s\hbar(\alpha-1)W^{(1)}_{-i,j}-\delta_{j,i+1}\hbar(s+1)(\alpha+1) W^{(1)}_{-i-1,i}t^{s+1}\nonumber\\
&\quad+s\hbar(\alpha-1)\delta_{i+1,j}W^{(1)}_{-i-1,i}t^s.
\end{align*}
Thus, we obtain $\Phi(T^s_{i,j})=-\hbar W^{(2)}_{i,j}t^{s+1}$.

By \eqref{OPE4} and \eqref{OPE5}, we have
\begin{align}
&\quad[W^{(2)}_{i,i}t,W^{(2)}_{j,j}t]\nonumber\\
&=\alpha\partial^2W^{(1)}_{j,j}t^2-2\partial W^{(2)}_{j,j}t^2-(W^{(1)}_{i,j})_{(-1)}W^{(2)}_{j,i}t^2+(W^{(1)}_{j,i})_{(-1)}W^{(2)}_{i,j}t^2\nonumber\\
&\quad-(W^{(1)}_{i,-j})_{(-1)}W^{(2)}_{-j,i}t^2-(W^{(1)}_{-i,j})_{(-1)}W^{(2)}_{j,-i}t^2+(\alpha-1)(W^{(1)}_{i,j})_{(-1)}\partial W^{(1)}_{j,i}t^2-(W^{(1)}_{j,i})_{(-1)}\partial W^{(1)}_{i,j}t^2\nonumber\\
&\quad+(\alpha-1)(W^{(1)}_{i,-j})_{(-1)}\partial W^{(1)}_{-j,i}t^2+(\alpha-1)(W^{(1)}_{-i,j})_{(-1)}\partial W^{(1)}_{j,-i}t^2\nonumber\\
&\quad+2\partial W^{(1)}_{i,i}t+\alpha\partial W^{(1)}_{j,j}t-2W^{(2)}_{i,i}t-2W^{(2)}_{j,j}t\nonumber\\
&\quad+(\alpha-2)(W^{(1)}_{i,j})_{(-1)}W^{(1)}_{j,i}t+(\alpha-2)(W^{(1)}_{i,-j})_{(-1)}W^{(1)}_{-j,i}t\nonumber\\
&=2\alpha W^{(1)}_{j,j}+4W^{(2)}_{j,j}-\sum_{s\geq0}\limits(W^{(1)}_{i,j}t^{-s-1}W^{(2)}_{j,i}t^{s+2}+W^{(2)}_{j,i}t^{1-s}W^{(1)}_{i,j}t^s)\nonumber\\
&\quad+\sum_{s\geq0}\limits(W^{(1)}_{j,i}t^{-s-1}W^{(2)}_{i,j}t^{s+2}+W^{(2)}_{i,j}t^{1-s}W^{(1)}_{j,i}t^s)\nonumber\\
&\quad-\sum_{s\geq0}\limits(W^{(1)}_{i,-j}t^{-s-1}W^{(2)}_{-j,i}t^{s+2}+W^{(2)}_{-j,i}t^{1-s}W^{(1)}_{i,-j}t^s)\nonumber\\
&\quad-\sum_{s\geq0}\limits(W^{(1)}_{-i,j}t^{-s-1}W^{(2)}_{j,-i}t^{s+2}+W^{(2)}_{j,-i}t^{1-s}W^{(1)}_{-i,j}t^s)\nonumber\\
&\quad+(\alpha-1)\sum_{s\geq0}\limits(-(s+2)W^{(1)}_{i,j}t^{-s-1}W^{(1)}_{j,i}t^{s+1}-(1-s)W^{(1)}_{j,i}t^{-s}W^{(1)}_{i,j}t^s)\nonumber\\
&\quad-\sum_{s\geq0}\limits(-(s+2)W^{(1)}_{j,i}t^{-s-1}W^{(1)}_{i,j}t^{s+1}-(1-s) W^{(1)}_{i,j}t^{-s}W^{(1)}_{j,i}t^s)\nonumber\\
&\quad+(\alpha-1)\sum_{s\geq0}\limits(-(s+2)W^{(1)}_{i,-j}t^{-s-1}W^{(1)}_{-j,i}t^{s+1}-(1-s)W^{(1)}_{-j,i}t^{-s}W^{(1)}_{i,-j}t^s)\nonumber\\
&\quad+(\alpha-1)\sum_{s\geq0}\limits(-(s+2)W^{(1)}_{-i,j}t^{-s-1}W^{(1)}_{j,-i}t^{s+1}-(1-s)W^{(1)}_{j,-i}t^{-s}W^{(1)}_{-i,j}t^s)\nonumber\\
&\quad-2W^{(1)}_{i,i}-\alpha W^{(1)}_{j,j}-2W^{(2)}_{i,i}t-2W^{(2)}_{j,j}t\nonumber\\
&\quad+(\alpha-2)\sum_{s\geq0}\limits(W^{(1)}_{i,j}t^{-s-1}W^{(1)}_{j,i}t^{s+1}+W^{(1)}_{j,i}t^{-s}W^{(1)}_{i,j}t^s)\nonumber\\
&\quad+(\alpha-2)(\sum_{s\geq0}\limits(W^{(1)}_{i,-j}t^{-s-1}W^{(1)}_{-j,i}t^{s+1}+W^{(1)}_{-j,i}t^{-s}W^{(1)}_{i,-j}t^s).\label{rell1}
\end{align}
By \eqref{OPE1} and \eqref{rel99}, we obtain
\begin{align}
&\quad-\sum_{s\geq0}\limits(W^{(1)}_{i,j}t^{-s-1}W^{(2)}_{j,i}t^{s+2}+W^{(2)}_{j,i}t^{1-s}W^{(1)}_{i,j}t^s)\nonumber\\
&=-\sum_{s\geq0}\limits(W^{(1)}_{i,j}t^{-s}W^{(2)}_{j,i}t^{s+1}+W^{(2)}_{j,i}t^{-s}W^{(1)}_{i,j}t^{s+1})-(W^{(2)}_{j,j}t-W^{(2)}_{i,i}t),\label{rell2}\\
&\quad\sum_{s\geq0}\limits(W^{(1)}_{j,i}t^{-s-1}W^{(2)}_{i,j}t^{s+2}+W^{(2)}_{i,j}t^{1-s}W^{(1)}_{j,i}t^s)\nonumber\\
&=\sum_{s\geq0}\limits(W^{(1)}_{j,i}t^{-s}W^{(2)}_{i,j}t^{s+1}+W^{(2)}_{i,j}t^{-s}W^{(1)}_{j,i}t^{s+1})+(W^{(2)}_{i,i}t-W^{(2)}_{j,j}t),\label{rell3}\\
&\quad-\sum_{s\geq0}\limits(W^{(1)}_{i,-j}t^{-s-1}W^{(2)}_{-j,i}t^{s+2}+W^{(2)}_{-j,i}t^{1-s}W^{(1)}_{i,-j}t^s)\nonumber\\
&=-\sum_{s\geq0}\limits(W^{(1)}_{i,-j}t^{-s}W^{(2)}_{-j,i}t^{s+1}+W^{(2)}_{-j,i}t^{-s}W^{(1)}_{i,-j}t^{s+1})-(W^{(2)}_{-j,-j}t-W^{(2)}_{i,i}t)\nonumber\\
&=-\sum_{s\geq0}\limits(W^{(1)}_{i,-j}t^{-s}W^{(2)}_{-j,i}t^{s+1}+W^{(2)}_{-j,i}t^{-s}W^{(1)}_{i,-j}t^{s+1})-(W^{(2)}_{j,j}t-W^{(2)}_{i,i}t)+(\alpha+1)W^{(1)}_{j,j},\label{rell4}\\
&\quad-\sum_{s\geq0}\limits(W^{(1)}_{-i,j}t^{-s-1}W^{(2)}_{j,-i}t^{s+2}+W^{(2)}_{j,-i}t^{1-s}W^{(1)}_{-i,j}t^s)\nonumber\\
&=-\sum_{s\geq0}\limits(W^{(1)}_{-i,j}t^{-s}W^{(2)}_{j,-i}t^{s+1}+W^{(2)}_{j,-i}t^{-s}W^{(1)}_{-i,j}t^{s+1})-(W^{(2)}_{j,j}t-W^{(2)}_{-i,-i}t)\nonumber\\
&=-\sum_{s\geq0}\limits(W^{(1)}_{-i,j}t^{-s}W^{(2)}_{j,-i}t^{s+1}+W^{(2)}_{j,-i}t^{-s}W^{(1)}_{-i,j}t^{s+1})-(W^{(2)}_{j,j}t-W^{(2)}_{i,i}t)-(\alpha+1)W^{(1)}_{i,i}.\label{rell5}
\end{align}
By \eqref{OPE1-1}, we obtain
\begin{align}
&\quad-\sum_{s\geq0}\limits(-(s+2)W^{(1)}_{j,i}t^{-s-1}W^{(1)}_{i,j}t^{s+1}-(1-s) W^{(1)}_{i,j}t^{-s}W^{(1)}_{j,i}t^s)\nonumber\\
&=-\sum_{s\geq0}\limits(-(s+1)W^{(1)}_{j,i}t^{-s}W^{(1)}_{i,j}t^{s}-(-s) W^{(1)}_{i,j}t^{-s-1}W^{(1)}_{j,i}t^{s+1})-(W^{(1)}_{i,i}-W^{(1)}_{j,j}),\label{rell6}\\
&\quad(\alpha-1)\sum_{s\geq0}\limits(-(s+2)W^{(1)}_{-i,j}t^{-s-1}W^{(1)}_{j,-i}t^{s+1}-(1-s)W^{(1)}_{j,-i}t^{-s}W^{(1)}_{-i,j}t^s)\nonumber\\
&=(\alpha-1)\sum_{s\geq0}\limits(-(s+1)W^{(1)}_{-i,j}t^{-s}W^{(1)}_{j,-i}t^{s}-(-s)W^{(1)}_{j,-i}t^{-s-1}W^{(1)}_{-i,j}t^{s+1})\nonumber\\
&\quad-(\alpha-1)(W^{(1)}_{j,j}-W^{(1)}_{-i,-i}).\label{rell7}
\end{align}
Applying \eqref{rell2}-\eqref{rell2} to \eqref{rell1}, we obtain
\begin{align*}
&\quad[W^{(2)}_{i,i}t,W^{(2)}_{j,j}t]\\
&=2W^{(2)}_{i,i}t-2W^{(2)}_{j,j}t-\sum_{s\geq0}\limits(W^{(1)}_{i,j}t^{-s}W^{(2)}_{j,i}t^{s+1}+W^{(2)}_{j,i}t^{-s}W^{(1)}_{i,j}t^{s+1})\\
&\quad+\sum_{s\geq0}\limits(W^{(1)}_{j,i}t^{-s}W^{(2)}_{i,j}t^{s+1}+W^{(2)}_{i,j}t^{-s}W^{(1)}_{j,i}t^{s+1})\\
&\quad-\sum_{s\geq0}\limits(W^{(1)}_{i,-j}t^{-s}W^{(2)}_{-j,i}t^{s+1}+W^{(2)}_{-j,i}t^{-s}W^{(1)}_{i,-j}t^{s+1})\\
&\quad-\sum_{s\geq0}\limits(W^{(1)}_{-i,j}t^{-s}W^{(2)}_{j,-i}t^{s+1}+W^{(2)}_{j,-i}t^{-s}W^{(1)}_{-i,j}t^{s+1})\\
&\quad+\sum_{s\geq0}\limits(-(s+1)\alpha W^{(1)}_{i,j}t^{-s-1}W^{(1)}_{j,i}t^{s+1}+s\alpha W^{(1)}_{j,i}t^{-s}W^{(1)}_{i,j}t^s)\nonumber\\
&\quad-\alpha(\sum_{s\geq0}\limits(W^{(1)}_{i,-j}t^{-s-1}W^{(1)}_{-j,i}t^{s+1}+W^{(1)}_{-j,i}t^{-s}W^{(1)}_{i,-j}t^s)\\
&\quad+2(\alpha+1)W^{(1)}_{j,j}-2(\alpha+1)W^{(1)}_{i,i}.
\end{align*}
Thus, we have proved the compatibility with \eqref{rel10}.

At last, we will show the compatibility with \eqref{rel11} and \eqref{rel12}. By the definition of $\Phi$, we have
\begin{align*}
&\quad[[\Phi(X^+_{n,1}),\Phi(X^-_{n,1})],\Phi(X^+_{n-1,0})]=-\hbar^2(W^{(1)}_{n-1,n})_{(0)}((W^{(2)}_{n-1,-n})_{(0)}W^{(2)}_{-n,n-1})t^2\\
&\quad-\hbar^2(W^{(1)}_{n-1,n})_{(0)}((W^{(2)}_{n-1,-n})_{(1)}W^{(2)}_{-n,n-1})t,\\
&\quad[[\Phi(X^+_{0,1}),\Phi(X^-_{0,1})],\Phi(X^+_{1,0})]=(W^{(1)}_{1,2})_{(0)}((W^{(2)}_{1,-2})_{(0)}W^{(2)}_{-2,1})t^2.
\end{align*}
By the definition of the universal enveloping algebra, we have
\begin{align*}
&\quad(W^{(1)}_{i,j})_{(0)}((W^{(2)}_{i,-j})_{(1)}W^{(2)}_{-j,i})t\\
&=(4-\alpha^2)\partial W^{(1)}_{i,j}t+2(2-\alpha)W^{(1)}_{i,j})_{(-1)}(W^{(1)}_{i,i}-W^{(1)}_{j,j})t\\
&=-(4-\alpha^2)W^{(1)}_{i,j}+2(2-\alpha)\sum_{s\geq0}\limits (W^{(1)}_{i,j}t^{-s-1}((W^{(1)}_{i,i}-W^{(1)}_{j,j})t^{s+1}+(W^{(1)}_{i,i}-W^{(1)}_{j,j})t^{-s}W^{(1)}_{i,j}t^s).
\end{align*}
and
\begin{align*}
&\quad(W^{(1)}_{i,j})_{(0)}((W^{(2)}_{i,-j})_{(0)}W^{(2)}_{-j,i})t^2\\
&=\alpha\partial^2W^{(1)}_{i,j}t^2-\alpha\partial W^{(2)}_{i,j}t^2-2(W^{(1)}_{i,j})_{(-1)}(W^{(2)}_{i,i}-W^{(2)}_{j,j})t^2-2(W^{(1)}_{i,i}-W^{(1)}_{j,j})_{(-1)}W^{(2)}_{i,j}t^2\\
&\quad+2(W^{(1)}_{i,j})_{(-1)}\partial (W^{(1)}_{i,i}-W^{(1)}_{j,j})t^2+2(W^{(1)}_{i,i}-W^{(1)}_{j,j})_{(-1)}\partial W^{(1)}_{i,j}t^2\\
&=2\alpha W^{(1)}_{i,j}+2\alpha\partial W^{(2)}_{i,j}t\\
&\quad-2\sum_{s\geq0}\limits (W^{(1)}_{i,j}t^{-s-1}(W^{(2)}_{i,i}-W^{(2)}_{j,j})t^{s+2}+(W^{(2)}_{i,i}-W^{(2)}_{j,j})t^{1-s}W^{(1)}_{i,j}t^{s})\\
&\quad-2\sum_{s\geq0}\limits ((W^{(1)}_{i,i}-W^{(1)}_{j,j})t^{-s-1}W^{(2)}_{i,j}t^{s+2}+W^{(2)}_{i,j}t^{1-s}(W^{(1)}_{i,i}-W^{(1)}_{j,j})t^s)\\
&\quad+2\sum_{s\geq0}\limits(-(s+2)W^{(1)}_{i,j}t^{-s-1}(W^{(1)}_{i,i}-W^{(1)}_{j,j})t^{s+1}-(1-s)(W^{(1)}_{i,i}-W^{(1)}_{j,j})t^{-s}W^{(1)}_{i,j}t^s)\\
&\quad+2\sum_{s\geq0}\limits(-(s+2)(W^{(1)}_{i,i}-W^{(1)}_{j,j})t^{-s-1}W^{(1)}_{i,j}t^{s+1}-(1-s)W^{(1)}_{i,j}t^{-s}(W^{(1)}_{i,i}-W^{(1)}_{j,j})t^s).
\end{align*}
Thus, we have obtained the compatibility with \eqref{rel11} and \eqref{rel12}.
\end{proof}
\section*{Statements and Declarations}
\subsection*{Funding}
This research was supported by JSPS Research Fellowship for Young Scientists (PD), Grant Number JP25KJ0038.
\subsection*{Data availability}
The authors confirm that the data supporting the findings of this study are available within the article and its supplementary materials.
\subsection*{Conflicts of interest}
The authors declare no conflicts of interest associated with this manuscript.
\subsection*{Ethical Approval declaration}
Ethical approval was not required for this study as it did not involve human or animal subjects.

\end{document}